\documentclass[12pt,reqno]{amsart}
\usepackage{amssymb}
\usepackage{amsmath}
\usepackage{enumitem,colonequals}

\emergencystretch=1.5em
\usepackage{color}
\usepackage{mathrsfs}
\usepackage[all]{xy}
\setcounter{tocdepth}{1}

\usepackage[margin=1.28in]{geometry}

\RequirePackage{mathrsfs} 

\newtheorem{theorem}{Theorem}[section]
\newtheorem{lemma}[theorem]{Lemma}
\newtheorem{proposition}[theorem]{Proposition}
\newtheorem{corollary}[theorem]{Corollary}
\newtheorem{conjecture}[theorem]{Conjecture}

\theoremstyle{definition}
\newtheorem*{ack}{Acknowledgements}
\newtheorem{remark}[theorem]{Remark}
\newtheorem{assumption}[theorem]{Assumption}
\newtheorem{example}[theorem]{Example}
\newtheorem{definition}[theorem]{Definition}

\numberwithin{equation}{section} \numberwithin{figure}{section}

\DeclareMathOperator{\ad}{an}
\DeclareMathOperator{\an}{an}
\DeclareMathOperator{\alg}{alg}

 \DeclareMathOperator{\Spec}{Spec}

\DeclareMathOperator{\Spa}{Spa}
\DeclareMathOperator{\Hom}{Hom}

\newcommand{\ra}{\rightarrow}

\newcommand\ZZ{\mathbb{Z}}

\newcommand\QQ{\mathbb{Q}}
\newcommand\RR{\mathbb{R}}
\newcommand\CC{\mathbb{C}}

\newcommand\OO{\mathcal{O}}
\newcommand\mfX{\mathfrak{X}}
\newcommand\mfU{\mathfrak{U}}

\renewcommand{\leq}{\leqslant}

\renewcommand{\geq}{\geqslant}

\title[Non-archimedean  hyperbolicity]{Non-archimedean hyperbolicity and applications}

\author{Ariyan Javanpeykar}
\address{Ariyan Javanpeykar \\
	Institut f\"{u}r Mathematik\\
	Johannes Gutenberg-Universit\"{a}t Mainz\\
	Staudingerweg 9, 55099 Mainz\\
	Germany.}
\email{peykar@uni-mainz.de}

\author{Alberto Vezzani} 
\address{Alberto Vezzani \\
	Dipartimento di Matematica ``F. Enriques"\\
	Universit\`a degli Studi di Milano\\
	Via Cesare Saldini 50\\
	20133 Milan \\
	Italy.}
\email{alberto.vezzani@unimi.it}

\thanks{The first named author gratefully acknowledges support from  SFB/Transregio 45, and the second named author from the {\it Agence Nationale de la Recherche}, 
	projects ANR-14-CE25-0002 and  ANR-18-CE40-0017.} 
 
\subjclass[2010]
{32H20  
	(14G35,    
	32P05, 
	14D23)}  

\keywords{Generizing and specializing hyperbolicity, rigid analytic varieties, abelian varieties,  Green--Griffiths--Lang conjecture, perfectoid spaces.}

\begin{document}

\begin{abstract}  
 Inspired by the work of Cherry, we introduce and study a  new notion of  Brody hyperbolicity for  rigid analytic varieties over a non-archimedean field $K$ of characteristic zero. 
 We use this  notion of hyperbolicity  to  show the following algebraic statement: if    a projective variety     admits a non-constant morphism from an abelian variety, then so does  any specialization of it.  
  As an application of this result, we    show that  the   moduli space of abelian varieties is $K$-analytically Brody hyperbolic  in equal characteristic $0$. These two  results are  predicted by  the Green--Griffiths--Lang  conjecture on hyperbolic varieties and its natural analogues for non-archimedean hyperbolicity.   Finally, we use Scholze's uniformization theorem to prove that the aforementioned moduli space   satisfies a non-archimedean analogue of the ``Theorem of the Fixed Part''    in mixed characteristic.      
\end{abstract}

\maketitle

\thispagestyle{empty}

\section{Introduction}  
 Conjectures of Green--Griffiths and Lang predict a precise interplay between different notions of hyperbolicity \cite{GrGr, Lang2}.  
In this paper, inspired by work of Cherry \cite{CherryKoba}, we introduce and study a new \emph{non-archimedean} notion of hyperbolicity, and study an analogue of the Green--Griffiths--Lang conjecture in this context.

 \subsection{Non-archimedean   Green--Griffiths--Lang's conjecture} Let $X$ be a   variety over $\CC$, and let $X^{\an}$ be the associated complex analytic space. Recall that $X$ is Brody hyperbolic if $X^{\an}$ has no entire curves, i.e., every holomorphic map $\mathbb{C}\to X^{\an}$ is constant. 
A conjecture of Green--Griffiths--Lang says that, if every algebraic map from an abelian variety to a projective variety $X$ over $\CC$ is constant, then $X^{\an}$ is Brody hyperbolic; see \cite{GrGr, Lang2}. 
We now formulate a non-archimedean analogue of this conjecture.

If $K$ is a complete non-archimedean valued field and $X$ is a finite type scheme over $K$, we let $X^{\an}$  be the associated rigid analytic variety over $K$. We say that a variety over $K$ is \emph{$K$-analytically Brody hyperbolic} if, for every finite type connected group scheme $G$ over $K$, every morphism $G^{\an}\to X^{\an}$ is constant; see    Section \ref{section:definitions_hyperbolicity} for more definitions.  The analogue of the Green--Griffiths--Lang conjecture in this context reads as follows.

 \begin{conjecture}[Non-archimedean~Green--Griffiths--Lang]\label{conj:lang}  Let $K$ be an algebraically closed complete non-archimedean valued field of characteristic zero, and let $X$ be a proper scheme over $K$. Suppose that, for every abelian variety $A$ over $K$, every morphism $A\to X$ is constant. Then $X$ is $K$-analytically Brody hyperbolic. 
\end{conjecture}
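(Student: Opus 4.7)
The strategy is to reduce, via the structure theory of algebraic groups, to three prototype cases for the domain $G$: an abelian variety $A$, the multiplicative group $\Gm$, and the additive group $\Ga$. Chevalley's theorem presents any connected finite type group scheme over $K$ as an extension of an abelian variety by a linear algebraic group, which in characteristic zero decomposes further into a torus and successive extensions of copies of $\Ga$. Since $G^{\an}$ is topologically generated by the analytifications of its closed one-parameter subgroups together with translates of its closed abelian subvarieties, establishing that the restriction of $f\colon G^{\an}\to X^{\an}$ to each such piece is constant forces the full map to be constant by continuity of multiplication.

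The abelian and multiplicative cases should be tractable. If $G=A$ is an abelian variety, then any morphism $A^{\an}\to X^{\an}$ between analytifications of proper $K$-schemes is algebraic by rigid GAGA, hence constant by hypothesis. If $G=\Gm$, one first notes that the hypothesis on $X$ rules out rational curves: any non-constant morphism $\PP^1\to X$ would pull back along a non-constant degree-two map $E\to \PP^1$ from an elliptic curve to yield a non-constant $E\to X$. One then embeds $\Gm\hookrightarrow \PP^1$ and invokes a non-archimedean big Picard-type extension theorem, in the vein of Cherry, to extend $\Gm^{\an}\to X^{\an}$ across the two punctures. Rigid GAGA algebraizes the extension, which must be constant since $X$ has no rational curves.

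The main obstacle is the case $G=\Ga$. Unlike in the complex setting, the non-archimedean affine line $\Ga^{\an}$ admits a rich supply of non-constant analytic maps into projective targets, even into those for which the abelian-variety hypothesis of the conjecture holds. A conceivable attack is a rigid-analytic analogue of the Bloch--Ochiai theorem: show that the Zariski closure of the image of an analytic map $f\colon \Ga^{\an}\to X^{\an}$ is a translate of an abelian subvariety inside a suitable Albanese-like target, and then use the hypothesis to force it to be a point. Developing the growth and equidistribution tools needed to control entire non-archimedean curves appears out of reach in full generality, which is precisely why the statement is left as a conjecture; the authors establish it only in the important special case of the moduli space of abelian varieties, where Scholze-type $p$-adic uniformization supplies the missing rigidity.
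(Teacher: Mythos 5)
The statement you set out to prove is stated in the paper as a conjecture and is not proved there in any generality, so the only meaningful comparison is between your reduction and the paper's, and your reduction locates the difficulty in the wrong place. Your treatment of abelian domains agrees with the paper: for $X$ proper, rigid GAGA algebraizes any $A^{\an}\to X^{\an}$, which is then constant by hypothesis (Lemma \ref{lem:gaga}, Proposition \ref{prop:obvious}). But $\Ga$ is not the hard case. Once every analytic map $\Gm^{\an}\to X^{\an}$ is constant, every analytic map $\mathbb{A}^{1,\an}\to X^{\an}$ is constant as well: restrict to the dense open $\Gm^{\an}$ and use the identity theorem, or argue as in Lemma \ref{lemma:structure_result}, where constancy on the whole affine part of Chevalley's decomposition is deduced from the $\Gm$-case because any two points of a smooth connected affine group lie on the image of a map from $\mathbb{A}^1\setminus\{0\}$. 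In the non-archimedean world $\mathbb{A}^{1,\an}$ is in fact a very \emph{weak} test object (it admits no nonconstant map even to $\Gm^{\an}$), which is exactly why the paper tests on all algebraic groups and why Lemma \ref{lemma:structure_result} and Proposition \ref{prop:obvious} reduce Conjecture \ref{conj:lang} to the single statement: groupless $\Rightarrow$ $K$-analytically pure, i.e.\ no nonconstant analytic maps from $\Gm^{\an}$. No Bloch--Ochiai-type machinery for entire non-archimedean curves is called for.

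The genuine gap is therefore your $\Gm$ step, which begs the question. The only property of $X$ you invoke there is the absence of rational curves, and under that hypothesis alone the proposed big Picard-type extension across the punctures is false: the Tate uniformization of an elliptic curve $E$ with multiplicative reduction is a nonconstant analytic map $\Gm^{\an}\to E^{\an}$ to a projective target without rational curves, and it does not extend to $\mathbb{P}^{1,\an}$. Cherry-style extension and constancy results require good reduction of the target (this is how Theorem \ref{thm:test_on_gr}, via Bosch--L\"utkebohmert uniformization, and Theorem \ref{thm:main_result} operate), and proving that grouplessness of $X$ kills all analytic tori in $X^{\an}$ is precisely the content of the conjecture, not a quotable lemma. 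Finally, your account of what the paper achieves is inaccurate: the conjecture is verified for constant varieties, closed subvarieties of abelian varieties, curves, symmetric powers, and for $\mathcal{A}_{g,K}^{[N]}$ only in equal characteristic zero (via Nadel's theorem plus the constant-variety case), whereas Scholze's perfectoid uniformization yields only the Theorem of the Fixed Part (Theorem \ref{thm:moduli_of_abelian_varieties_1}) in mixed characteristic; this is strictly weaker, since $K$-analytic Brody hyperbolicity does not descend along pro-finite \'etale covers, as the perfections of $\Gm$ and of abelian varieties show.
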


 Although the  notion   of $K$-analytic Brody hyperbolicity introduced above   has not appeared before in the literature, we were first led to investigate  this notion by the work of Cherry; see      \cite{AnCherryWang, Cherry, CherryKoba, CherryRu, LevinWang, LinWang}.  
 
There is no non-constant morphism $\mathbb A^{1,\ad}_{K}\to \mathbb G_{m,K}^{\ad}$ (contrary to the complex analytic setting); see \cite[Proposition~I.4]{Cherry} for instance. Therefore, as $\mathbb{G}_{m,K}$ is clearly not hyperbolic, we see that, to prove that an algebraic variety $X$ over $K$ is  $K$-analytically Brody hyperbolicity, it does not suffice to show that all morphisms $\mathbb{A}^{1,\ad}_{K}\to X^{\ad}$  are constant. 
 
In fact, to ``show'' the $K$-analytic Brody hyperbolicity of a variety $X$ over $K$,  one has to verify that, for every connected algebraic group $G$ over $K$, every analytic map $G^{\an}\to X^{\an}$ is constant. We show that, a variety $X$ over $K$ is $K$-analytically Brody hyperbolic if and only if, for every abelian variety $A$ over $K$ with good reduction every morphism of $K$-schemes $A\to X$ is constant \emph{and} every $K$-analytic morphism $\mathbb{G}_{m,K}^{\an}\to X^{\an}$ is constant.
 
 We stress that it is not clear what the ``right'' notion of non-archimedean hyperbolicity is (or should be). In this paper, we investigate a non-archimedean analogue of ``groupless varieties'' (as defined   in \cite{JKam, JXie} and   Definition \ref{def:groupless}), and show that this notion satisfies some of the expected properties. However, it is certainly worth pursuing non-archimedean analogues of ``Borel hyperbolic'' complex algebraic varieties \cite{JKuch}, as recently done by Ruiran Sun \cite{Sun}.   Moreover, another perspective on complex-analytic hyperbolicity is provided by Kobayashi's pseudometric. Cherry proposed a non-archimedean analogue of this pseudometric, but notices quickly that it does not have the right properties. We refer the reader to Section \ref{section:distance} for a discussion of Cherry's non-archimedean analogue of Kobayashi's pseudodistance.
 
 Lang   conjectured that, for $k$ an algebraically closed field of characteristic zero and $X$ a  projective variety $X$ over $k$, we have that every morphism from an abelian variety $A$ to $X$ is constant   if and only if it is ``arithmetically hyperbolic'' \cite[Definition~4.1]{JLalg}, i.e., for every $\ZZ$-finitely generated subring $A\subset \CC$, and every finite type separated scheme $\mathcal{X}$ over $A$ with $\mathcal{X}_\CC\cong X$, the set $\mathcal{X}(A)$ is finite; see   \cite[\S0.3]{Abr}, \cite{JBook}, \cite[Conjecture~1.1]{JAut}, \cite{Lang2}  and \cite[Conjecture~XV.4.3]{CornellSilverman}.
 In conclusion, the   non-archimedean version of the Green--Griffiths--Lang conjecture (Conjecture \ref{conj:lang})   predicts that a projective  variety $X$  over an algebraically closed complete non-archimedean valued field $K$ of characteristic zero is arithmetically hyperbolic over $K$ if and only if it is $K$-analytically Brody hyperbolic. Other ``arithmetic'' speculations related to $K$-analytic Brody hyperbolicity are   made by An--Levin--Wang  \cite{AnLevinWang}.
 
 \subsection{Evidence for   non-archimedean Green--Griffiths--Lang}
 Our first result verifies Conjecture \ref{conj:lang} for constant varieties over $K$.
 
 \begin{theorem}[Non-archimedean Green--Griffiths--Lang for constant varieties]\label{thm:lang_for_constant} 
 Let $X$ be a proper  scheme over an algebraically closed field $k$. Fix a complete  algebraically closed non-archimedean valued field $K$ with ring of integers $\OO_K$ and fix a section $k\hookrightarrow\OO_K$ of the quotient map. 
 Suppose that, for every abelian variety $A$ over $K$, every morphism $A\to X_K$ is constant. Then $X_K$ is $K$-analytically Brody hyperbolic over $K$.
 \end{theorem}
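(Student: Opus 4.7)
The plan is to apply the characterization (noted earlier in the introduction) that $X_K$ is $K$-analytically Brody hyperbolic if and only if every morphism of $K$-schemes from an abelian variety $A/K$ with good reduction to $X_K$ is constant, and every analytic morphism $\mathbb{G}_{m,K}^{\an} \to X_K^{\an}$ is constant. The first condition is immediate from the hypothesis, so the task reduces to showing that every analytic $f\colon \mathbb{G}_{m,K}^{\an} \to X_K^{\an}$ is constant.

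A preliminary observation is that $X$ contains no rational curves: if $\mathbb{P}^1_k \to X$ were non-constant, then composing with the quotient $E \to E/\{\pm 1\} \cong \mathbb{P}^1_k$, for any elliptic curve $E/k$, would yield a non-constant morphism $E \to X$, and base-changing to $K$ would contradict the hypothesis. It follows that every morphism of $k$-schemes $\mathbb{P}^1_k \to X$, and hence, by properness of $X$, every morphism from $\mathbb{A}^1_k$ or $\mathbb{G}_{m,k}$ to $X$, is constant. Now exhibit $\mathbb{G}_{m,K}^{\an}$ as an increasing union of closed annuli $A_r = \{r \leq |z| \leq r^{-1}\}$, for $r \in |K^{\times}|$ with $r < 1$. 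Via the section $k \hookrightarrow \OO_K$, the scheme $X_{\OO_K} := X \times_k \OO_K$ is a proper $\OO_K$-model of $X_K$ with special fibre $X$, so by properness the restriction $f|_{A_r}$ lifts to the associated formal models and reduces to a morphism of $k$-schemes $\bar A_r \to X$. For $r < 1$ the special fibre $\bar A_r$ is the union of two copies of $\mathbb{A}^1_k$ glued at the origin; by the previous observation each component is sent to a single point of $X$, and these points coincide because the components share the origin. Hence $f(A_r)$ lies in a single formal residue disk of $X_K^{\an}$, and since distinct residue disks are disjoint while the residue disks containing $f(A_r)$ for varying $r$ all meet $f(A_1)$, they all agree. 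Therefore $f(\mathbb{G}_{m,K}^{\an}) \subseteq \mathbb{B}_p$ for a single formal residue disk $\mathbb{B}_p$ around some $p \in X$.

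To finish, pick a closed immersion $X \hookrightarrow \mathbb{P}^N_k$ and an affine chart of $\mathbb{P}^N_k$ centred at $p$, so that $\mathbb{B}_p$ embeds into the open unit polydisk $\{|w_i|<1\}^N \subseteq \mathbb{A}^{N,\an}_K$. Then $f$ is described by $N$ analytic functions on $\mathbb{G}_{m,K}^{\an}$, each bounded by $1$. Writing such a function as a Laurent series $\sum_{n\in\ZZ} a_n z^n$, boundedness on $\mathbb{G}_{m,K}^{\an}$ forces $|a_n| r^n$ to be bounded uniformly in $r > 0$, which forces $a_n = 0$ for $n \neq 0$. Hence each component is constant, and so is $f$. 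The main obstacle is the reduction step: making precise that $f|_{A_r}$ reduces to a morphism of $k$-schemes $\bar A_r \to X$ and that the residue disks piece together across $r$. This rests on the properness of $X$ and the canonical $\OO_K$-model $X_{\OO_K}$ provided by the section $k \hookrightarrow \OO_K$; granted this, the rest is a routine application of non-archimedean Liouville for $\mathbb{G}_{m,K}^{\an}$.
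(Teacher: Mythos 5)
You follow the paper's basic strategy: reduce, via the characterization in Theorem \ref{thm:test_on_gr}, to showing constancy of algebraic maps from abelian varieties with good reduction (immediate from the hypothesis) and of analytic maps $\mathbb{G}_{m,K}^{\an}\to X_K^{\an}$. You also correctly identify that the latter hinges on $X$ having no rational curves over $k$, and that the image of $\mathbb{G}_m^{\an}$ must land in a single residue tube, after which a Laurent-series Liouville argument finishes. A pleasant simplification over the paper is that, because your hypothesis is about abelian varieties over $K$ rather than only the special fibre, you avoid the extension Lemma \ref{lem:good_reduction_case_1} that the paper needs for the stronger Theorem \ref{thm:main_result}.

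However, there is a genuine gap in the step you flag yourself: the assertion that ``by properness the restriction $f|_{A_r}$ lifts to the associated formal models and reduces to a morphism of $k$-schemes $\bar A_r\to X$'', with $\bar A_r$ the two $\mathbb{A}^1_k$'s glued at the origin, is not correct. Properness of $X_{\OO_K}$ does not produce a morphism of the standard formal models: by Raynaud's theorem, a morphism $A_r\to X_K^{\an}$ of rigid spaces is induced by a morphism of formal models only after an admissible blowup of the source $\mathfrak A_r$, whose special fibre is then a longer chain of rational curves (extra $\mathbb{P}^1$ components), not the two-component nodal curve of the standard annulus model. Equivalently, the composite $A_r\to X_K^{\an}\xrightarrow{sp} X_k$ need not factor through the specialization map of the standard model of $A_r$. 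This is fixable — after any blowup the special fibre is still a connected union of rational curves, each constant into $X$, so the image still lands in one tube — but the paper avoids the formal-model bookkeeping entirely: Proposition \ref{prop:constant} shows directly that $sp\circ f$ has image consisting of closed points (because the residue field of a type-$2$ point of $\mathbb{G}_m^{\ad}$ is $k(t)$, so a non-closed image point would produce a rational curve in $X_k$), and Lemma \ref{lem:topology} then forces $sp\circ f$ to be constant on all of $\mathbb{G}_m^{\an}$, with no need to chop into annuli. Two minor points: the elliptic-curve trick for ``no rational curves'' can be replaced by citing Remark \ref{remark:gr_and_pure} (groupless proper schemes are pure); and your final step assumes a projective embedding $X\hookrightarrow\mathbb{P}^N_k$, whereas only properness is given — use an affine chart around $p$, or simply invoke Proposition \ref{prop:aff_are_brody}, of which your Laurent computation is the special case $\mathbb{G}_m\to\mathbb{B}^N$.
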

 
 To prove  Theorem \ref{thm:lang_for_constant}, we establish a more general result relating $K$-analytic Brody hyperbolicity of the generic fiber of a proper scheme $X$ over $\OO_K$ to the ``hyperbolicity'' of its special fiber. The precise statement reads as follows.

\begin{theorem}[Inheriting   hyperbolicity from the special fiber]\label{thm:gen_fiber_is_hyp} Let $K$ be complete algebraically closed non-archimedean valued field $K$.
Let $X$ be a proper finitely presented scheme over $\OO_K$. Suppose that, for every abelian variety over $k$, every morphism $A\to X_k$ is constant. Then $X_K$ is $K$-analytically Brody hyperbolic. 
\end{theorem}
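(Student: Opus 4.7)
The plan is to verify the two sufficient conditions for $K$-analytic Brody hyperbolicity recalled in the introduction, namely that (a) every $K$-morphism $A\to X_K$ from an abelian variety $A/K$ with good reduction is constant, and (b) every $K$-analytic morphism $\mathbb{G}_{m,K}^{\an}\to X_K^{\an}$ is constant. Both arguments reduce to the special fibre, combining a tube construction with a non-archimedean Liouville-type input.

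For (a), let $\mathcal{A}/\OO_K$ be the abelian scheme extending $A$. Given $f\colon A\to X_K$, the Zariski closure in $\mathcal{A}\times_{\OO_K}X$ of the graph of $f$ is proper and birational over $\mathcal{A}$, hence an isomorphism since $\mathcal{A}$ is smooth; this yields an extension $\widetilde f\colon\mathcal{A}\to X$ over $\OO_K$. Its reduction $\widetilde f_k\colon\mathcal{A}_k\to X_k$ is a morphism from an abelian variety over $k$, so constant with some value $x_0\in X_k$ by hypothesis. Formal completion along special fibres factors $\widehat{\widetilde f}\colon\widehat{\mathcal{A}}\to\widehat X$ through $\mathrm{Spf}\,\widehat{\mathcal O}_{X,x_0}$, and on rigid generic fibres the map $A^{\an}\to X_K^{\an}$ lands in the tube $]x_0[$, which sits as a closed analytic subspace of an open polydisk $\mathbb D_K^d$ via local coordinates of $X$ at $x_0$. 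Since $A^{\an}$ is proper and connected, each polydisk coordinate pulls back to a constant; hence $f$ is constant.

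For (b), a key preliminary observation is that the hypothesis already forbids rational curves in $X_k$: for any such $C\subset X_k$, the normalisation $\mathbb P^1_k\to C$ composed with a degree-two cover $E_k\to\mathbb P^1_k$ by an elliptic curve over $k$ produces a non-constant morphism from an abelian variety to $X_k$. Given $f\colon\mathbb{G}_{m,K}^{\an}\to X_K^{\an}$, fix $q\in K^*$ with $|q|<1$ and exhaust the source by the affinoid annuli $\mathcal C_n=\{|q|^n\le|t|\le|q|^{-n}\}$. By properness of $X/\OO_K$ and Raynaud's admissible-blowup extension of rigid morphisms into proper formal schemes, each $f|_{\mathcal C_n}$ admits a formal model $\mathfrak f_n\colon\mathfrak C_n\to\widehat X$. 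The special fibre $(\mathfrak C_n)_k$ is a semistable chain of rational curves; since $X_k$ has no rational curves, $(\mathfrak f_n)_k$ is constant on each component, and the nodal identifications force a single global value $x_0^{(n)}\in X_k$. Tubes of distinct points being disjoint, compatibility of the $f|_{\mathcal C_n}$ forces $x_0^{(n)}$ independent of $n$; call it $x_0$. Then $f$ factors through $]x_0[\subset\mathbb D_K^d$. Pulling back polydisk coordinates produces global analytic functions on $\mathbb{G}_{m,K}^{\an}$ of sup-norm strictly less than $1$; writing each as a Laurent series $\sum_{m\in\ZZ} a_m t^m$ and inspecting $\max_m|a_m|r^m$ on $\{|t|=r\}$ as $r\to 0$ and $r\to\infty$ forces $a_m=0$ for $m\ne 0$, so each coordinate, and hence $f$, is constant.

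The principal obstacle is Case (b). Unlike the proper $A^{\an}$ of Case (a), the affinoids $\mathcal C_n$ admit many non-constant analytic maps into an open polydisk, so constant reduction and tube containment are not in themselves enough. The decisive extra ingredient is the non-archimedean Liouville property of $\mathbb{G}_{m,K}^{\an}$, asserting that any bounded global analytic function on it is constant; combined with the (non-obvious) consequence of the hypothesis that $X_k$ contains no rational curves, this is what propagates the formal-model reduction argument into a genuine global rigidity statement for $f$.
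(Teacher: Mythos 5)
Your overall skeleton coincides with the paper's: reduce, via the criterion that hyperbolicity can be tested on analytic maps from $\mathbb{G}_{m,K}^{\an}$ and algebraic maps from abelian varieties with good reduction (Theorem \ref{thm:test_on_gr}), to two statements about the special fibre. Your part (b) is essentially correct and runs parallel to Proposition \ref{prop:constant} and Corollary \ref{cor:crit_for_constancy}: the components of the reduction of any formal model of an annulus are rational curves, the hypothesis forbids rational curves in $X_k$ (your elliptic double-cover observation is a valid way to see this, and has the merit of working in any residue characteristic), so the map factors through a tube, and a Laurent-series Liouville argument on $\mathbb{G}_{m,K}^{\an}$ finishes; the paper instead argues via residue fields of points of $C^{\ad}$ and the hyperbolicity of affinoids (Proposition \ref{prop:aff_are_brody}), but the content is the same.

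The genuine gap is in part (a), at the extension of $f\colon A\to X_K$ to $\widetilde f\colon\mathcal{A}\to X$ over $\OO_K$. Your justification --- the graph closure is ``proper and birational over $\mathcal{A}$, hence an isomorphism since $\mathcal{A}$ is smooth'' --- is false as a general principle: a blow-up of a smooth scheme along a centre in the special fibre is proper and birational but not an isomorphism, and the closure of the graph of $f$ can indeed acquire positive-dimensional fibres over points of $\mathcal{A}_k$ (over a DVR one easily builds proper $X$ with $X_K\cong A$ for which no extension $\mathcal{A}\to X$ exists). Any correct extension argument must use that the fibres of $X\to\Spec\OO_K$ contain no rational curves --- which you only establish later, inside part (b), and never feed into part (a). This is exactly where the paper invests its technical work: Lemma \ref{lem:gll} (resting on \cite[Proposition~6.2]{GLL}), the spreading-out of purity to all fibres (Corollary \ref{cor:purity_gens}), and, because $\OO_K$ is not noetherian, the descent to a regular noetherian base via alterations (Lemma \ref{lem:descend}) before these results can be applied (Lemma \ref{lem:good_reduction_case_1}). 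Your proposal skips all of this, so the key step of case (a) is unproved; the remainder of (a) (constant reduction, factorisation through the tube of $x_0$, and constancy of maps from the proper connected $A^{\an}$ into an open polydisk) is fine once the extension is granted.
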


Theorem \ref{thm:gen_fiber_is_hyp} is proven using a uniformization theorem of Bosch--L\"utkebohmert for abelian varieties over $K$.

The above two results (Theorems \ref{thm:lang_for_constant} and \ref{thm:gen_fiber_is_hyp})   are in accordance with the non-archimedean Green--Griffiths--Lang conjecture (Conjecture \ref{conj:lang}).  We note that   Conjecture \ref{conj:lang} was previously shown to hold  for closed subvarieties of abelian varieties. More precisely, for closed subvarieties of abelian varieties,  the work of  Cherry \cite{Cherry}, Faltings \cite{FaltingsLang1, FaltingsLang2}, Kawamata \cite{Kawamata}, and Ueno \cite[Theorem~3.10]{Ueno} can be combined into the following result.

 \begin{theorem}[Cherry, Faltings, Kawamata, Ueno]\label{thm:bloch} Let $K$ be an algebraically closed field of characteristic zero.
 Let $X$ be a closed subvariety of an abelian variety over $K$. Then the following are equivalent.
 \begin{enumerate}
 \item For every abelian variety $A$ over $K$, every morphism $A\to X$ is constant, i.e., $X$ is groupless (Definition \ref{def:groupless}). (Equivalently, $X$ does not contain the translate of a positive-dimensional abelian subvariety of $A$.)
 \item Every closed integral subvariety of $X$ is of general type.
 \item The projective variety $X$ is arithmetically hyperbolic \cite[Definition~4.1]{JLalg}.
 \item  If $K=\CC$: The projective variety is $X$ is Brody hyperbolic.
  \item If $K$ is non-archimedean: The projective variety $X$ is $K$-analytically Brody hyperbolic.
 \end{enumerate}
 \end{theorem}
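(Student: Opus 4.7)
My approach is to isolate the single geometric condition
\emph{($\ast$) $X$ contains no translate of a positive-dimensional abelian subvariety of $A$}
as a pivot through which all five conditions are linked, and then to invoke the deep theorems relating $(\ast)$ to each notion of hyperbolicity. First I would justify the parenthetical in (1): that $X$ is groupless in the sense of (1) exactly when $(\ast)$ holds. One direction is tautological, since a positive-dimensional translate of an abelian subvariety yields a non-constant morphism from an abelian variety. For the converse, given a non-constant $f\colon B\to X\subset A$, I would precompose with a translation on $B$ to reduce to the case where $f$ is a homomorphism of abelian varieties; then the image of $f$ in $A$ is a positive-dimensional abelian subvariety contained in $X$.

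Next I would establish the cycle $(1)\Leftrightarrow(2)\Rightarrow(3)\Rightarrow(1)$. The equivalence $(1)\Leftrightarrow(2)$ follows from Ueno's fibration theorem \cite[Theorem 3.10]{Ueno}: any closed subvariety $Y$ of an abelian variety sits in a surjection $Y\to Y_0$ whose generic fiber is a translate of an abelian subvariety and with $Y_0$ of general type. Applied to every integral subvariety of $X$, this says that $(\ast)$ holds inside every such subvariety exactly when every such subvariety is of general type; and a positive-dimensional translate lying in some subvariety of $X$ is automatically one in $X$ itself. The implication $(2)\Rightarrow(3)$ comes from Faltings' theorem \cite{FaltingsLang1, FaltingsLang2}, which confines the rational points of $X$ to a finite union of translates of abelian subvarieties contained in $X$; under (2) these translates must be points, yielding arithmetic hyperbolicity. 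Finally, $(3)\Rightarrow(1)$ is immediate: an abelian variety has infinitely many rational points over a sufficiently large finitely generated subring, so a non-constant map $A\to X$ would obstruct arithmetic hyperbolicity.

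To close the loop with (4) and (5), I would invoke the Bloch--Ochiai--Kawamata theorem when $K=\CC$: a closed subvariety of a complex abelian variety is Brody hyperbolic if and only if $(\ast)$ holds. In the non-archimedean case, Cherry's theorem \cite{Cherry} provides the corresponding equivalence of $(\ast)$ with (5). Combining these with the previous cycle yields the full equivalence of (1)--(5).

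The principal obstacle is entirely conceptual rather than technical: all the heavy lifting is done by the cited theorems, namely Ueno's fibration, Faltings on the Mordell--Lang conjecture, Bloch--Ochiai--Kawamata in the archimedean setting, and Cherry's analogue in the non-archimedean setting. The real work in assembling the statement is to identify $(\ast)$ as the correct pivot and to verify that the ``every subvariety is of general type'' formulation in (2) and the ``no positive-dimensional abelian translate in $X$'' characterization of (1) coincide cleanly via Ueno---an observation that, as noted above, uses nothing beyond the fact that a translate contained in a subvariety of $X$ is \emph{a fortiori} contained in $X$.
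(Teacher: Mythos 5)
Your proposal is correct and follows precisely the standard decomposition the paper alludes to when it says the result ``can be combined'' from the cited works. The paper itself gives no written-out proof of Theorem \ref{thm:bloch}; it simply attributes the equivalence to Cherry, Faltings, Kawamata and Ueno, and your plan correctly identifies the pivot condition $(\ast)$, the reduction of a morphism $B\to A$ to a homomorphism by translating $f(0_B)$ to the origin, Ueno's fibration for $(1)\Leftrightarrow(2)$, Faltings on Mordell--Lang for $(2)\Rightarrow(3)$, the elementary $(3)\Rightarrow(1)$, and Bloch--Ochiai--Kawamata (respectively Cherry) for $(4)$ (respectively $(5)$). Two small points you would want to make explicit in a final write-up: in $(3)\Rightarrow(1)$ one must choose a sufficiently large finitely generated subring over which the abelian subvariety, the translate, and the inclusion into $X$ all spread out and over which the Mordell--Weil group becomes infinite; and in $(1)\Leftrightarrow(2)$ the ``only if'' direction should be phrased carefully, since a translate of a positive-dimensional abelian subvariety contained in $X$ is a closed integral subvariety of Kodaira dimension zero, hence not of general type. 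Neither gap is substantive, and the proposal matches the intended argument.
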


Cherry's theorem (Theorem \ref{thm:bloch}) not only shows that the non-archimedean Green--Griffiths--Lang conjecture holds for closed subvarieties of abelian varieties, but it also suggests that our notion of non-archimedean hyperbolicity is not far from the ``right'' one. 

We   use Cherry's theorem (Theorem \ref{thm:bloch}) to verify that the non-archimedean Green--Griffiths--Lang conjecture  (Conjecture \ref{conj:lang}) holds  for all projective curves (Proposition \ref{prop:lv_for_curves}) and their symmetric powers (Proposition \ref{prop:lang_for_syms}). 

\subsection{An algebraic application of Theorem \ref{thm:gen_fiber_is_hyp}} 
If $X$ is a projective variety over an algebraically closed field $k$, then we say that $X$ is \emph{groupless} if, for every abelian variety $A$ over $k$, every morphism $A\to X$ is constant (see Definition \ref{def:groupless}).  To motivate the following results, note that the classic Green--Griffiths--Lang conjecture predicts that a projective variety over $\CC$ is groupless if and only if it is of general type and Brody hyperbolic.  

Now, the (classical) Green--Griffiths--Lang conjecture predicts that every specialization of a non-groupless variety is non-groupless; we explain this in detail in Remark \ref{remark:gt}. Our next result verifies this prediction.  

\begin{theorem}[Grouplessness generizes]\label{thm:grouplessness_generizes}
  Let $S$ be an integral normal variety over an algebraically closed field $k$ of characteristic zero with function field $K=K(S)$. Let $K\to K^a$ be an algebraic closure of $K$.
	Let $X\to S$ be a proper morphism of  schemes such that there is an $s$ in $S(k)$ with $X_s$ groupless over $k$.   Then the geometric generic fiber $X_{K^a}$ of $X\to S$ is groupless over $K^a$.  
\end{theorem}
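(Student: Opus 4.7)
My plan is to argue by contradiction, using Theorem \ref{thm:gen_fiber_is_hyp} to transport grouplessness from the special fiber $X_s$ to the geometric generic fiber via a non-archimedean model of $X\to S$ built at $s$.

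The first step is to construct a complete algebraically closed non-archimedean valued field $\Omega$ equipped with a morphism $\Spec \OO_\Omega \to S$ sending the closed point to $s$ and the generic point to the generic point of $S$, together with a compatible embedding $K^a \hookrightarrow \Omega$. For this, I choose a rank-one valuation $v$ on $K=K(S)$ whose valuation ring $\OO_v$ dominates $\OO_{S,s}$, which exists by standard valuation theory for Noetherian local domains. After extending $v$ to $K^a$, passing to the completion, taking an algebraic closure, and completing once more, I obtain the required $\Omega$; its valuation ring contains $\OO_v$, hence $\OO_{S,s}$, giving the desired morphism $\Spec \OO_\Omega \to S$, and the embedding $K^a \hookrightarrow \Omega$ is manifest from the construction.

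Now I form $\mathcal{X}:=X\times_S \Spec \OO_\Omega$, which is proper and finitely presented over $\OO_\Omega$. Its special fiber is $X_s\otimes_k k'$ (where $k'$ denotes the residue field of $\OO_\Omega$, an algebraically closed extension of $k$) and its generic fiber is the base change $X_\Omega$. The hypothesis that $X_s$ is groupless over $k$ implies, by stability of grouplessness under extensions of algebraically closed fields of characteristic zero (see \cite{JKam}), that $X_s\otimes_k k'$ is groupless over $k'$. Theorem \ref{thm:gen_fiber_is_hyp} then gives that $X_\Omega$ is $\Omega$-analytically Brody hyperbolic, so in particular every morphism from an abelian variety over $\Omega$ to $X_\Omega$ is constant. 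If $X_{K^a}$ were not groupless, there would be an abelian variety $A$ over $K^a$ and a non-constant morphism $f: A\to X_{K^a}$, and base changing along $K^a\hookrightarrow \Omega$ would produce a non-constant morphism $A_\Omega \to X_\Omega$ (since $X(K^a)\hookrightarrow X(\Omega)$ is injective), a contradiction.

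The main obstacle, already visible in the first step, is that when $\dim S > 1$ no DVR inside $K(S)$ dominates $\OO_{S,s}$: one must work with a non-discrete rank-one valuation and accept that $k'$ is typically a transcendental (though algebraically closed) extension of $k$. This is precisely why the stability of grouplessness under extensions of algebraically closed base fields of characteristic zero is needed; once that is granted, the remainder of the argument is formal and Theorem \ref{thm:gen_fiber_is_hyp} does the real work.
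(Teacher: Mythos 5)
Your argument is correct, and its engine is the same as the paper's: build a proper, finitely presented model of $X$ over the valuation ring of a complete algebraically closed non-archimedean field whose closed point lies over $s$, then apply Theorem \ref{thm:gen_fiber_is_hyp} (i.e.\ Theorem \ref{thm:main_result}) to pass from grouplessness of the special fiber to $K$-analytic Brody hyperbolicity, hence grouplessness, of the generic fiber. Where you genuinely differ is the reduction step: the paper (Theorem \ref{thm:gr_gen}, via the proof of Theorem \ref{thm:pur_gen}) localizes at $s$ and runs the cutting/induction argument of Corollary \ref{cor:purity_gens} so as to land on a valuation ring whose residue field is $k$ itself, whereas you dominate $\OO_{S,s}$ in one step by a rank-one valuation, accept an algebraically closed residue field $k'\supset k$ that may be much larger, and compensate with the invariance of grouplessness under extensions of algebraically closed fields of characteristic zero (Remark \ref{remark:geometricity_groupless}, \cite{JKam}). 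Your route is more direct: it avoids the induction on $\dim S$ and any regularity issues in the cutting step (the localization of a normal variety at $s$ need not be regular), at the modest price of invoking the base-change invariance from \cite{JKam}, which the paper records anyway. One factual slip in your closing remark: it is not true that no DVR of $K(S)$ dominates $\OO_{S,s}$ when $\dim S>1$; any Noetherian local domain which is not a field is dominated by a discrete valuation ring of its fraction field \cite[Tag~00PH]{stacks-project}, for instance the local ring at the generic point of the exceptional divisor of the blow-up at $s$. The genuine obstacle is the one you in fact address, namely that such a valuation ring typically has residue field transcendental over $k$, and this is exactly what the field-extension invariance of grouplessness resolves; also, your phrase ``since $X(K^a)\hookrightarrow X(\Omega)$ is injective'' would be better justified by noting that a morphism of varieties over an algebraically closed field which is non-constant stays non-constant after any field extension (check on two rational points with distinct images), but this is cosmetic.
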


It seems worth stressing that our proof of Theorem \ref{thm:grouplessness_generizes} uses our ``algebraic'' criterion for non-archimedean hyperbolicity (Theorem \ref{thm:gen_fiber_is_hyp}).  In particular, our proof of Theorem \ref{thm:grouplessness_generizes} illustrates that our notion of non-archimedean hyperbolicity can be used to verify algebraic predictions made by the (classical) Green--Griffiths--Lang conjecture.

To motivate our next result, we note that the fact that  a groupless variety should specialize to  a groupless variety   is also a consequence of the Green--Griffiths--Lang conjecture; we explain this also in detail in   Remark \ref{remark:gt}.   The precise statement reads as follows.

 \begin{theorem}[Grouplessness specializes]\label{thm:groupless_spec} Let $k$ be an uncountable algebraically closed field. Let $S$ be an integral variety over $k$ with function field $K = K(S)$. Let $K\to K^a$ be an algebraic closure of $K$.
 	Let $X\to S$ be a  proper morphism of schemes. If $X_{K^a}$ is   groupless over $K^a$, then there is an $s$ in $S(k)$ such that $X_s$ is groupless.
 \end{theorem}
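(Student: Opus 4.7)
The plan is to argue by contradiction via a pigeonhole over the uncountable field $k$ applied to a parameter space for non-constant morphisms from abelian varieties to fibres of $X\to S$. Suppose that every $s\in S(k)$ has $X_s$ non-groupless; then for each such $s$ there is some $g\geq 1$, an abelian variety $A$ of dimension $g$ over $k$, and a non-constant morphism $A\to X_s$. The case $\dim S=0$ is trivial, so I assume $\dim S\geq 1$.

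For each $g\geq 1$, choose a fine moduli scheme $\mathcal{M}_g$ of principally polarized abelian $g$-folds with level-$3$ structure, with universal abelian scheme $\pi:\mathcal{X}_g\to\mathcal{M}_g$. Invoking Artin's representability of $\underline{\operatorname{Hom}}$-functors between proper morphisms, I form
\[
H_g=\underline{\operatorname{Hom}}_{\mathcal{M}_g\times S}\bigl(\mathcal{X}_g\times S,\ \mathcal{M}_g\times X\bigr),
\]
an algebraic space locally of finite type over $\mathcal{M}_g\times S$. The open subspace $H_g^\circ\subset H_g$ of non-constant morphisms decomposes as a countable disjoint union of finite-type pieces (for instance stratifying by the Hilbert polynomial of the graph with respect to an auxiliary polarization afforded by Chow's lemma applied to a relative compactification). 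By Chevalley, the image in $S$ of each finite-type piece is a constructible subset; letting $(g,\text{component})$ range over the countable index set, our assumption implies that $S(k)$ is covered by a countable family of such constructible subsets of $S$.

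Since $k$ is uncountable and $S$ is a positive-dimensional integral $k$-variety, a standard Baire-type argument (slicing by a generic line after Noether normalisation, where proper constructible subsets of the line are finite) shows that $S(k)$ cannot be covered by countably many constructibles none of which is dense in $S$. Hence at least one finite-type piece $V\subseteq H_g^\circ$ maps dominantly to $S$; taking the generic point of an irreducible component of $V$ lying over the generic point of $S$ yields a field extension $L/K$ of finite transcendence degree, an abelian variety $A$ over $L$, and a non-constant morphism $f:A\to X_L$.

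Finally I transport $f$ to $K^a$. Since $L/K$ may be transcendental, $L$ need not embed into $K^a$; instead choose a prime of the non-zero ring $L\otimes_K K^a$ and let $\Omega$ be an algebraic closure of the residue field, so that $L$ and $K^a$ both embed into $\Omega$ over $K$. Base change yields a non-constant morphism $A_\Omega\to X_\Omega$, which spreads out over some finitely generated $K^a$-subalgebra $R\subseteq\Omega$ to an abelian scheme $A_R\to\operatorname{Spec} R$ and an $R$-morphism $f_R:A_R\to X_R$ non-constant on the generic fibre. The locus in $\operatorname{Spec} R$ where $f_R$ is non-constant on the fibre is constructible and contains the generic point, so it contains a dense open, and hence by the Nullstellensatz a $K^a$-point $r_0$; specializing at $r_0$ produces a non-constant morphism from an abelian variety over $K^a$ to $X_{K^a}$, contradicting the hypothesis. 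The main technical obstacle is the construction and countable finite-type decomposition of $H_g$ when $X\to S$ is only proper, which requires Artin's algebraic-space machinery together with Chow's lemma; the uncountability pigeonhole and the Lefschetz-style descent through $\Omega$ are then routine.
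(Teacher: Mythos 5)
Your overall strategy — pigeonhole over the uncountable base field applied to a countable decomposition of a Hom-type parameter space, then pass to the generic fibre — is exactly the route the paper takes, and the spreading-out step at the end (via a field $\Omega$ containing both $L$ and $K^a$) is valid, if longer than necessary: since the dominating piece $V$ is of finite type over $S$, its generic fibre $V_K$ has a closed point whose residue field is a \emph{finite} extension of $K$, which already embeds in $K^a$, so no transcendental detour is needed.

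However, there is a genuine gap at the start: you parametrize only by moduli of \emph{principally polarized} abelian varieties $\mathcal{M}_g$, but an arbitrary non-groupless fibre $X_s$ may receive a non-constant map from an abelian variety $A$ that is \emph{not} principally polarizable, and such an $A$ corresponds to no point of any $\mathcal{M}_g$. Your countable family of constructibles $S_{g,\bullet}$ therefore need not cover $S(k)$, and the pigeonhole collapses. The paper closes exactly this gap with Zarhin's trick (its Lemma~\ref{lem:zarhin}): given a non-constant $A\to X_s$, the abelian variety $B := A^4\times A^{\vee,4}$ is principally polarizable and the composite $B\to A\to X_s$ through the first projection is still non-constant, so grouplessness can be tested on principally polarizable abelian varieties only. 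You would need to insert this reduction before invoking $\mathcal{M}_g$. A second, smaller issue: fixing level-$3$ structure silently assumes $3$ is invertible in $k$, whereas the statement allows arbitrary uncountable algebraically closed $k$; either take $N$ coprime to $\mathrm{char}\,k$, or (as the paper does) work directly with the stack $\mathcal{A}_g$ over $\ZZ$ and Olsson's Hom-stack theorem, avoiding level structures altogether.
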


To prove Theorem \ref{thm:groupless_spec} we use (only) algebraic arguments, and properties of the moduli space of principally polarized abelian varieties.

We stress that Theorems \ref{thm:grouplessness_generizes} and \ref{thm:groupless_spec} verify predictions made by the Green--Griffiths--Lang conjecture, and thereby provide (new) evidence for this conjecture.

\subsection{The moduli space of abelian varieties}  
If $g$ is a positive integer, we let $\mathcal A_g$ be the stack of $g$-dimensional principally polarized abelian schemes over $\mathbb Z$.  Let $N\geq 3$ be an integer coprime to $p$ and let $\mathcal{X} :=\mathcal{A}_{g}^{[N]}$ be the   moduli space of $g$-dimensional principally polarized abelian schemes with full level $N$-structure over $\ZZ[1/N]$. Note that $\mathcal{X}$ is a smooth quasi-projective scheme over $\mathbb{Z}[1/N]$; see \cite{MoretBailly}.

 The Green--Griffiths--Lang conjecture has an analogue for  quasi-projective, not necessarily projective, varieties to which we will refer to as the Lang--Vojta conjecture.
The original versions of Lang--Vojta's conjecture appeared in   \cite{Lang2} and \cite[Conj.~4.3]{Vojta3}. A general conjecture over finitely generated subrings of $\CC$ is stated in \cite[\S0.3]{Abr}  (see also \cite[Conj.~6.1]{JL}). Part of Lang--Vojta's conjecture predicts that,   if $X$ is a quasi-projective integral scheme over $\CC$ whose integral subvarieties are of log-general type, then $X$ is arithmetically hyperbolic over $\CC$ \cite[Definition~4.1]{JLalg} and Brody hyperbolic.  
 
It follows from the work of   Zuo that every integral subvariety of the moduli space $\mathcal{X}_\CC$ is of log-general type \cite{ZuoNeg} (see also \cite{Brunebarbe} and \cite[Lemma~6.3]{JL}). The aforementioned predictions made by the Lang--Vojta conjecture are that $\mathcal{X}_\CC$ is arithmetically hyperbolic over $\CC$ and Brody hyperbolic.  The arithmetic hyperbolicity of $\mathcal{X}_\CC$ was proven by Faltings; see \cite{Faltings2, FaltingsComplements}. As it plays a crucial role in our work below, recall  that $\mathcal{X}_{\CC}$ is (complex analytically) Brody hyperbolic, as  any holomorphic map  $\mathbb{A}^{1\ad}_{\CC}\to \mathcal{X}_{\CC}^{\textrm{an}}$   lifts   to the universal cover $\mathbb H_g$ of $\mathcal {X}_{\mathbb C}^{\textrm{an}}$,  the universal cover  $\mathbb H_g$ of $\mathcal X_{\mathbb C}^{\textrm{an}}$ is  a bounded domain, and bounded domains are Brody hyperbolic by Liouville's theorem.  In a diagram, the proof can be summarized as follows:
 
 \[
 \xymatrix{  & & \textrm{bounded domain} \ar[d]^{\textrm{topological covering}} \\ \mathbb{A}^{1\ad}_{\CC} \ar@{-->}[urr]^{\exists} \ar[rr] & & \mathcal X_{\CC}^{\ad}  }
 \] This proof also shows that maps into $\mathcal{X}_\CC$ with trivial monodromy are constant. The latter statement is an instance of the so-called ``Theorem of the Fixed Part''; see \cite[Theorem~3.1]{VoisinII} for the more general version due to Griffiths.

\begin{theorem}[Theorem of the Fixed Part]\label{thm:fixed_part_intro} Let $S$ be a finite type separated connected scheme over $\CC$. A  morphism  $S^{\ad}\ra \mathcal X_{\CC}^{\an}$ is constant if and only if the image of the induced morphism on fundamental groups 
\[
\pi_1(S^{\an}) \to \pi_1(\mathcal{X}_{\CC}^{\an})
\] is finite.
\end{theorem}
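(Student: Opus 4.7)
The forward direction is immediate, since a constant morphism induces the trivial map on fundamental groups, whose image is finite. For the converse, assume the image of $\pi_1(S^{\an}) \to \pi_1(\mathcal{X}_\CC^{\an})$ is finite. The plan is to exploit the bounded-domain realization of the universal cover of $\mathcal{X}_\CC^{\an}$ that is recalled in the paragraph preceding the theorem, and to reduce the conclusion to the claim that bounded holomorphic functions on finite type connected $\CC$-analytic varieties are constant.

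First, because $N \geq 3$, Minkowski's (or Serre's) lemma ensures that the principal congruence subgroup $\Gamma(N) \subset \mathrm{Sp}_{2g}(\ZZ)$ is torsion-free. Since $\mathcal{X}_\CC^{\an} = \Gamma(N)\backslash \mathbb{H}_g$, this identifies $\pi_1(\mathcal{X}_\CC^{\an})$ with the torsion-free group $\Gamma(N)$, so any finite subgroup is automatically trivial. Under our hypothesis the induced map on fundamental groups is therefore trivial, and standard covering theory, applicable because $S^{\an}$ is locally path-connected and semi-locally simply connected, produces a holomorphic lift $\tilde f \colon S^{\an} \to \mathbb{H}_g$ of $f$. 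Composing with the Harish--Chandra embedding of $\mathbb{H}_g$ as a bounded open subset of $\CC^{g(g+1)/2}$, the problem reduces to showing that every bounded holomorphic function $\varphi \colon S^{\an} \to \CC$ is constant when $S$ is a finite type connected $\CC$-scheme.

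To prove this last statement, I would choose (via Nagata) an open immersion $S \hookrightarrow \bar S$ into a proper $\CC$-scheme in which $S$ is dense, and consider the normalization $\nu \colon \bar S^{\nu} \to \bar S$. The pulled-back function $\nu^{*}\varphi$ is bounded and holomorphic on the open dense subset $\nu^{-1}(S)^{\an}$ of the normal compact analytic space $\bar S^{\nu,\an}$. By Riemann's second extension theorem it extends holomorphically across the complement, and by the maximum principle the extension is constant on each connected component of $\bar S^{\nu,\an}$. Pushing back down, $\varphi$ takes only finitely many values on $S^{\an}$, and being continuous on the connected space $S^{\an}$ it must be constant. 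Hence $\tilde f$ is constant, and so is $f$.

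The main technical input is the torsion-freeness of $\Gamma(N)$, which is exactly why the full level-$N$ structure with $N \geq 3$ is imposed; without it one would only get the conclusion up to a finite \'etale cover of $\mathcal{X}_\CC^{\an}$, leaving an orbifold ambiguity. The compactification-and-normalization step is standard, but must be set up with care so as to handle the case where $S$ is singular and its normalization is disconnected; the use of connectedness of $S^{\an}$ together with continuity of $\varphi$ is what glues the finitely many component-wise constant values into a single constant.
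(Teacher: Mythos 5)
Your proposal is correct and takes essentially the same route as the paper, which only sketches this statement: lift to the universal cover $\mathbb{H}_g$ (your torsion-freeness of $\Gamma(N)$ for $N\geq 3$ is exactly what turns ``finite image'' into ``trivial image''), embed it as a bounded domain, and conclude by the Liouville-type fact that bounded holomorphic functions on a connected finite type $\CC$-scheme are constant --- a fact the paper itself records in Remark \ref{remark:liouville}, proved there by reduction to curves rather than by your compactify--normalize--extend argument, both of which work. The only quibble is terminological: the extension result you need is the first (bounded) Riemann extension theorem on a normal complex space, not the second.
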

 
Motivated by the Lang--Vojta conjecture and the aforementioned properties of the moduli space of abelian varieties, it seems reasonable to suspect that, if $K$ is 
 a complete algebraically closed non-archimedean valued field of characteristic zero,  then the moduli space $\mathcal{X}_K$  is $K$-analytically Brody hyperbolic.
Our first result in the direction of this reasonable expectation reads as follows.

\begin{theorem}  
	If the residue field of $K$ has characteristic $0$, then the moduli space $\mathcal{X}_K$  is $K$-analytically Brody hyperbolic.
\end{theorem}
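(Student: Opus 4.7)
The plan is to exploit the characteristic-zero residue-field hypothesis to reduce everything to the hyperbolicity of $\mathcal{X}_k$. Since $k$ has characteristic zero, a lifting of Cohen type furnishes a section $k \hookrightarrow \mathcal{O}_K$ of the residue map, so that $\mathcal{X}_K = \mathcal{X}_k \times_k K$. As recalled in the introduction, $\mathcal{X}_{\CC}$ is complex-analytically Brody hyperbolic via the Siegel uniformization by $\mathbb{H}_g$; by the Lefschetz principle, the analogous statement about algebraic morphisms from connected algebraic groups to $\mathcal{X}$ persists over any algebraically closed characteristic-zero base field, in particular over $k$ and over $K$. Because $\mathcal{X}$ is only quasi-projective, Theorem~\ref{thm:lang_for_constant} does not apply directly; instead, I would use the characterization of $K$-analytic Brody hyperbolicity stated in the introduction just after Conjecture~\ref{conj:lang}, reducing the claim to two verifications: (i) every algebraic $K$-morphism $A \to \mathcal{X}_K$ from an abelian variety $A$ over $K$ with good reduction is constant, and (ii) every $K$-analytic morphism $\mathbb{G}_{m,K}^{\an} \to \mathcal{X}_K^{\an}$ is constant.

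Claim (i) is immediate: it is a particular case of the algebraic statement transferred from $\CC$ to $K$ by Lefschetz as above, and no good-reduction hypothesis is actually needed.

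For (ii), which I expect to be the main obstacle, no such transfer is available because $K$-analytic morphisms from $\mathbb{G}_{m,K}^{\an}$ vastly exceed algebraic morphisms from $\mathbb{G}_{m,K}$. My plan is to study a given $f \colon \mathbb{G}_{m,K}^{\an} \to \mathcal{X}_K^{\an}$ by restricting to each closed affinoid annulus $A_{r_1,r_2} = \{r_1 \leq |x| \leq r_2\}$ with $r_1, r_2 \in |K^{\times}|$. The moduli interpretation turns $f|_{A_{r_1,r_2}}$ into a principally polarized rigid-analytic family of abelian varieties with level-$N$ structure over $A_{r_1,r_2}$. Using the Bosch--L\"utkebohmert uniformization theorem, this family acquires a canonical formal model whose special fibre defines an algebraic morphism from the reduction of $A_{r_1,r_2}$ (which is $\mathbb{G}_{m,k}$ when $r_1 = r_2$, and a pair of affine lines meeting at a node when $r_1 < r_2$) to $\mathcal{X}_k$; that morphism is constant because $\mathcal{X}_k$ admits no non-constant morphism from $\mathbb{A}^1_k$ or $\mathbb{G}_{m,k}$ (again by Lefschetz). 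Hence $f|_{A_{r_1,r_2}}$ lands in the tube above a single $k$-point $\bar x \in \mathcal{X}_k$, which on generic fibres is an open polydisc inside $\mathcal{X}_K^{\an}$. Overlapping adjacent annuli forces $\bar x$ to be independent of $(r_1, r_2)$, so $f$ maps all of $\mathbb{G}_{m,K}^{\an}$ into a single open polydisc. In local étale coordinates this expresses $f$ as a tuple of entire Laurent series $h(x) = \sum_{n \in \ZZ} a_n x^n$ on $K^{\times}$ with $|h(x)| < 1$ for every $x \in K^{\times}$; the non-archimedean maximum principle gives $|a_n| r^n < 1$ for every $r \in |K^{\times}|$, and letting $r \to 0$ and $r \to \infty$ yields $a_n = 0$ for every $n \neq 0$. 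Thus $f$ is constant.

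The hardest step is obtaining the canonical formal model of the ppav family over $A_{r_1,r_2}$ and verifying that its special fibre defines a morphism into the open stratum $\mathcal{X}_k$ (rather than only into a toroidal boundary), since the non-properness of $\mathcal{X}$ blocks a direct valuative extension; this is precisely what Bosch--L\"utkebohmert supplies in this non-archimedean context. Combining (i), (ii) with the criterion then yields that $\mathcal{X}_K$ is $K$-analytically Brody hyperbolic.
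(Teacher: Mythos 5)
Your overall skeleton—reduce via Theorem~\ref{thm:test_on_gr} to (i) constancy of algebraic maps from abelian varieties and (ii) constancy of analytic maps from $\mathbb{G}_{m,K}^{\an}$—is correct, and step (i) is fine (grouplessness of $\mathcal{A}_{g,K}^{[N]}$ transfers from $\CC$ by Remark~\ref{remark:geometricity_groupless} and the Lefschetz principle). The difficulty, as you correctly flag, is step (ii). However, your proposed resolution of (ii) has a genuine gap. You assert that the family of ppav's over a closed annulus $A_{r_1,r_2}$ obtained from $f|_{A_{r_1,r_2}}$ ``acquires a canonical formal model whose special fibre defines an algebraic morphism \ldots to $\mathcal{X}_k$,'' citing Bosch--L\"utkebohmert. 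But Theorem~\ref{thm:raynaud} concerns a \emph{single} abelian variety over $K$, not a \emph{family} over a rigid annulus, and it says nothing about extending a family of ppav's to a formal family of abelian schemes over $\OO_K$. More seriously, such an extension need not exist: the abelian varieties parametrized by $f|_{A_{r_1,r_2}}$ can acquire bad (e.g.\ totally degenerate) reduction, in which case the ``reduction'' of the family leaves $\mathcal{A}_{g,k}^{[N]}$ entirely and lands in the boundary of a compactification. The non-properness of $\mathcal{A}_g^{[N]}$ over $\OO_K$ is not a technical inconvenience but the heart of the matter; without controlling degenerations there is no tube over a single $k$-point of $\mathcal{X}_k$ to land in, and the concluding Liouville/Laurent-series argument never gets off the ground.

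The paper avoids exactly this issue by working with the Satake--Baily--Borel compactification, which \emph{is} proper, and by invoking Nadel's theorem (Theorem~\ref{thm:nadel_C}): for some level $\ell$, the proper scheme $\mathcal{A}_{g,k}^{[\ell],\ast}$ is groupless over $k$. Because of the section $k\hookrightarrow\OO_K$, this is a constant proper scheme and Corollary~\ref{cor:char} (equivalently Theorem~\ref{thm:lang_for_constant}/\ref{thm:main_result}) applies directly, showing $\mathcal{A}_{g,K}^{[\ell],\ast}$ is $K$-analytically Brody hyperbolic. The open subscheme $\mathcal{A}_{g,K}^{[\ell]}$ inherits this, and one descends to arbitrary level $N\geq 3$ via the finite \'etale cover $\mathcal{A}_{g,K}^{[N\ell]}\to\mathcal{A}_{g,K}^{[N]}$ and Proposition~\ref{prop:chevalley_weil}. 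In short: the paper's route trades your direct annulus analysis for one extra input (Nadel), precisely in order to replace a non-proper moduli space by a proper one, and then reduces to the constant-variety theorem you correctly noted does not apply to $\mathcal{X}_K$ directly. If you wish to salvage your approach, you would need to carry out the annulus argument for the (proper) compactification and then separately show the image avoids the boundary; that is morally what the detour through Nadel's theorem accomplishes without any hands-on degeneration analysis.
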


In the mixed characteristic case, we do  not     show that the moduli space $\mathcal{X}_K$ is $K$-analytically Brody hyperbolic. However, we do obtain an analogue of the Theorem of the Fixed Part in this case. 
Indeed, using results of Scholze on perfectoid spaces, an argument   similar to  the above ``complex-analytic'' argument leads to the following result.

\begin{theorem}[Non-archimedean Theorem of the Fixed Part]   \label{thm:moduli_of_abelian_varieties_1} Let  $K$ be a complete algebraically closed non-archimedean valued field of characteristic zero and residue characteristic $p>0$. Let $S$ be a connected rigid analytic variety over $K$. A   morphism $S \ra \mathcal X_{K}^{\ad}$ is constant if and only if the image of the induced morphism on (algebraic) \'etale pro-$p$-fundamental groups 
\[
\pi_1^{\alg}(S) \ra \pi_1^{\alg}(\mathcal{X}_{K}^{\an}) \ra \mathrm{GSp}_{2g}(\ZZ_p)
\]
is finite.
\end{theorem}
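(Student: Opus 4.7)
The plan is to imitate the complex-analytic proof of the classical Theorem of the Fixed Part (Theorem~\ref{thm:fixed_part_intro}) sketched in the introduction, with Scholze's perfectoid uniformization of the Siegel modular tower replacing the uniformization of $\mathcal{X}_{\CC}^{\an}$ by the Siegel upper half-space $\mathbb{H}_g$, and the Hodge--Tate period map playing the role of the projection to a bounded domain.

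The ``only if'' direction is immediate: a constant morphism induces the trivial morphism on $\pi_1^{\alg}$, whose image is trivially finite. For the converse, suppose the image of $\pi_1^{\alg}(S) \to \mathrm{GSp}_{2g}(\ZZ_p)$ is finite, and let $\mathcal{A}_S \to S$ be the pullback along $f \colon S \to \mathcal{X}_K^{\an}$ of the universal abelian variety. After replacing $S$ by a connected finite \'etale cover on which the image of $\pi_1^{\alg}(S)$ becomes trivial, the problem reduces to showing that if the pro-$p$ Tate module $T_p\mathcal{A}_S$ is the trivial $\ZZ_p$-local system on $S$, then $f$ is constant.

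A trivialization of $T_p\mathcal{A}_S$ amounts to a compatible system of full level $Np^n$-structures on $\mathcal{A}_S$ for every $n\geq 1$, and by the moduli interpretation of the tower $\{\mathcal{X}_K^{[Np^n],\an}\}_n$ these assemble into a lift
\[
\tilde{f} \colon S \longrightarrow \mathcal{X}_K^{\mathrm{perf}} := \varprojlim_n \mathcal{X}_K^{[Np^n],\an}
\]
of $f$ to Scholze's perfectoid Siegel modular variety, which is the direct non-archimedean analog of lifting a map with trivial monodromy to the universal cover $\mathbb{H}_g$ in the complex-analytic setting. Composing with Scholze's Hodge--Tate period morphism $\pi_{HT} \colon \mathcal{X}_K^{\mathrm{perf}} \to \mathcal{F}\ell$ into the adic Lagrangian Grassmannian yields a rigid-analytic morphism $\pi_{HT}\circ \tilde{f} \colon S \to \mathcal{F}\ell$ which records the position of the $p$-adic Hodge filtration inside the trivialized Tate module.

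The main obstacle is the ``non-archimedean Liouville'' step by which one extracts constancy: one must argue that $\pi_{HT}\circ\tilde{f}$ necessarily factors through a geometrically discrete locus of $\mathcal{F}\ell$, and is therefore constant on the connected space $S$. Granting this, $\tilde{f}$ then maps $S$ into a single fibre of $\pi_{HT}$; using Scholze's description of such fibres as essentially profinite (zero-dimensional in the adic sense), the connectedness of $S$ forces $\tilde{f}$, and hence $f$, to be constant. Making precise the sense in which the perfectoid uniformization furnishes a ``bounded domain'' for rigid-analytic maps — so that this final rigidity step goes through — is the chief technical hurdle, for which Scholze's perfectoid theory of Siegel modular varieties is the essential input.
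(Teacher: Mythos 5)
Your overall architecture (trivialize the monodromy on a finite \'etale cover, lift through the $\Gamma(p^n)$-tower, land in Scholze's perfectoid Siegel space) is the same as the paper's, which simply combines Proposition \ref{prop:thm_of_fixed_part} with Theorem \ref{thm:scholze}. But your proposal has a genuine gap exactly at the step you yourself flag as ``the chief technical hurdle'': the non-archimedean Liouville step. In the paper this is Proposition \ref{prop:rigid_to_perf} (stated as Proposition \ref{thm:perf_is_hyp} in the introduction): \emph{any} morphism from a connected reduced rigid analytic variety to a perfectoid space is constant. Its proof is elementary and does not use the Hodge--Tate period map at all: one reduces to a smooth affinoid source, observes that a Tate algebra $K\langle T_1,\ldots,T_n\rangle$ contains no non-constant element admitting compatible $p$-power roots, and then uses the tilting correspondence together with a $p$-adic approximation argument (writing elements of $P^\circ$ as convergent sums $\sum p^i y_i^\sharp$) to see that the whole perfectoid algebra maps into $K$. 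Once you have the lift $\tilde f\colon S\to X_{\Gamma(p^\infty)}$, constancy of $\tilde f$, hence of $f$, is immediate from this; no further structure of the Siegel space is needed. Your substitute route --- arguing that $\pi_{HT}\circ\tilde f$ ``factors through a geometrically discrete locus of $\mathcal F\ell$'' and that the fibres of $\pi_{HT}$ are ``essentially profinite'' --- is both unproven and doubtful: nothing forces the composite of a rigid-analytic map with $\pi_{HT}$ into a discrete locus (that is precisely the kind of rigidity one is trying to establish), and the fibres of $\pi_{HT}$ are not profinite in general (over the ordinary locus they are positive-dimensional Igusa-type perfectoid spaces). So the essential content of the theorem is missing from your argument.

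A secondary point: you assert that the compatible system of level structures ``assembles into a lift'' $S\to\varprojlim_n\mathcal X_K^{[Np^n],\an}$, but this inverse limit does not exist in the category of adic spaces; the perfectoid space only satisfies $X_{\Gamma(p^\infty)}\sim\varprojlim_n X_{\Gamma(p^n)}^{\ad}$. To promote the compatible system of maps $S\to X_{\Gamma(p^n)}^{\ad}$ to an actual morphism of adic spaces $S\to X_{\Gamma(p^\infty)}$, the paper uses that $S$ is reduced, hence stably uniform, together with a lifting result of Ludwig (cited in the proof of Proposition \ref{prop:thm_of_fixed_part}). This hypothesis and citation should appear in any complete write-up.
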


  Our   proof of Theorem \ref{thm:moduli_of_abelian_varieties_1}   is conceptually quite close to the line of reasoning sketched above. 
  However, 
instead of lifting a map $S\to \mathcal{X}_{K}^{\ad}$ to the topological universal cover of $\mathcal{X}_{K}^{\ad}$, we lift to a suitable perfectoid pro-finite \'etale covering \cite[\S3]{ScholzeTorsion}.  

In a diagram similar to the above ``complex-analytic'' diagram, our line of reasoning used to prove Theorem \ref{thm:moduli_of_abelian_varieties_1} can be summarized as follows:

 \[
 \xymatrix{       & & \textrm{Scholze's perfectoid space} \ar[d]^{\textrm{pro-finite \'etale}} \\   S \ar[rr] \ar@{-->}[urr]^{\exists}& &  \mathcal{X}_{K}^{\ad}  }
 \]

 For the reader's convenience, we stress that the crucial property of perfectoid spaces used in the proof of Theorem \ref{thm:moduli_of_abelian_varieties_1} is the following folklore result; we refer to Proposition \ref{prop:rigid_to_perf} for a more precise statement.
 
 \begin{proposition}\label{thm:perf_is_hyp}  Let  $K$ be a complete algebraically closed non-archimedean valued field of characteristic zero and residue characteristic $p>0$. 
 A perfectoid space over $K$ is $K$-analytically Brody hyperbolic.
 \end{proposition}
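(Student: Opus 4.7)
The plan is to exploit the defining property of perfectoid spaces in residue characteristic $p$: every analytic function admits approximate $p^n$-th roots for all $n \geq 1$. When such functions are pulled back to an affinoid of geometric origin, a Frobenius-additivity argument forces all their Taylor coefficients beyond the constant term to vanish.

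First I would reduce to the affinoid case. Since $G^{\an}$ is connected and covered by affinoid subdomains admitting closed immersions into Tate polydisks $\Spa(K\langle T_1,\ldots,T_d\rangle, \OO_K\langle T_1,\ldots,T_d\rangle)$, and since $X$ is covered by affinoid perfectoid opens $\Spa(S,S^+)$, it is enough to show that every morphism $f \colon \Spa(R,R^+) \to \Spa(S,S^+)$ (with $R$ a quotient of a Tate algebra and $S$ a perfectoid $K$-algebra) factors through $\Spa(K,\OO_K)$.

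Next, fix a pseudouniformizer $\pi \in \OO_K$ with $\pi^p \mid p$ and all $p$-power roots of $\pi$ available in $K$. Iterating the Frobenius-surjectivity condition defining perfectoidness, one obtains that for every $g \in S^\circ$ and every $N \geq 1$ there exist $n \geq 1$ and $G_n \in S^\circ$ with $g \equiv G_n^{p^n} \pmod{\pi^N S^\circ}$. Applying $f^*$, the function $f^* g \in R^+$ is a $p^n$-th power modulo $\pi^N$, for arbitrarily large $n$. Lifting to $\OO_K\langle T_1,\ldots,T_d\rangle$ and expanding $f^* g = \sum_{\mathbf m} a_{\mathbf m} T^{\mathbf m}$, the additivity of Frobenius modulo $p$ (hence modulo $\pi^p$) forces any $p^n$-th power to have Taylor support in $p^n \cdot \NN^d$. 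Intersecting over $n$, this shows $a_{\mathbf m} \in \pi^p \OO_K$ for $\mathbf m \neq 0$.

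Boosting this approximate constancy to exact constancy is the main obstacle, since a naive iteration of the Frobenius argument loses precision at each step in mixed characteristic. The cleanest way out is to rescale: replace $g$ by $(g-g(x_0))/\pi^p$ on a suitably shrunken neighborhood where this remains in $S^\circ$, and re-apply the argument to get $a_{\mathbf m} \in \pi^{2p}\OO_K$, and so on. Alternatively, one may pass to the tilt $\Spa(S,S^+)^\flat$, where Frobenius is bijective and the Fourier-vanishing holds without any precision loss, and then transfer the conclusion back via the tilting equivalence applied to the approximating $p^n$-th roots. Either way, one concludes $a_{\mathbf m}=0$ for $\mathbf m \neq 0$, so $f^* g \in \OO_K$; scaling shows $f^*$ sends $S$ into $K$, meaning $f$ is constant. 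Connectedness of $G^{\an}$ then yields the proposition.
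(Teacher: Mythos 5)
There is a genuine gap in the core approximation claim. You assert that iterating Frobenius-surjectivity gives, for every $g\in S^\circ$ and every $N$, some $n$ and $G_n\in S^\circ$ with $g\equiv G_n^{p^n}\pmod{\pi^N S^\circ}$. Perfectoidness does not give this. It gives surjectivity of $x\mapsto x^p$ on $S^\circ/p$, so you can write $g=G_1^p+p e_1$, then $G_1=G_2^p+p e_2$; but $G_1^p=(G_2^p+p e_2)^p\equiv G_2^{p^2}\pmod{p^2}$, so substituting back gives $g\equiv G_2^{p^2}+p e_1\pmod{p^2}$, i.e.\ still only $g\equiv G_2^{p^2}\pmod{p}$. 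Iteration does not improve the $p$-adic precision; you stay stuck at modulo $p$. Concretely, $g=1+T\in K\langle T^{1/p^\infty}\rangle^\circ$ is not close to a $p^n$-th power modulo $\pi^N$ for large $N$, since $(1+T)^{1/p}$ has non-integral coefficients. You sense a precision problem in the ``boost to exactness'' step, but the real failure is earlier: the statement you are trying to boost is already false. Your proposed rescaling fix inherits the same difficulty (and also requires $g-g(x_0)\in\pi^p S^\circ$, which is not automatic without shrinking in a way that would need further justification), and the remark about passing to the tilt ``and transferring back via the approximating $p^n$-th roots'' is not fleshed out enough to bridge the gap.

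The paper circumvents this cleanly by not working with approximate $p^n$-th powers at all. It first proves that $\OO_K\langle T_1,\dots,T_n\rangle$ contains no non-constant element admitting \emph{exact} $p^n$-th roots for all $n$ (via the chain of monoid isomorphisms $\varprojlim_{x\mapsto x^p}\OO_K\langle T\rangle\cong\varprojlim_{x\mapsto x^p}\OO_K\langle T\rangle/p\cong\varprojlim_{x\mapsto x^p}(\OO_K/p)[T]\cong\varprojlim\OO_K/p\cong\varprojlim\OO_K$). Then, for a general perfectoid $(P,P^+)$, it uses the untilt $\sharp\colon P^\flat\to P$ whose image consists of elements with exact $p$-power roots; since $P^{\flat\circ}/p^\flat\cong P^\circ/p$, every $x_0\in P^\circ$ admits a $p$-adically convergent expansion $x_0=\sum_i p^i y_i^\sharp$ with $y_i\in P^{\flat\circ}$, and $\varphi$ applied to each summand lands in $K$. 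No precision is lost because each $y_i^\sharp$ has genuine $p^n$-th roots. One further caveat: your reduction to quotients of Tate algebras with a chosen lift to $\OO_K\langle T_1,\dots,T_d\rangle$ is not well-founded, since different lifts of $f^*g$ have different Taylor coefficients; the paper instead reduces to the smooth locus and uses the local \'etale structure plus the identity theorem to genuinely reduce to $R=K\langle T_1,\dots,T_n\rangle$.
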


	\begin{ack} We thank Jackson Morrow for very helpful discussions. We thank Peter Scholze for a very helpful and inspiring discussion in Alpbach.   We are grateful to  Giuseppe Ancona, Johannes Ansch\"utz,  Olivier Benoist, Yohan Brunebarbe,  Ana Caraiani,  C\'edric Pepin,  Arno Kret, Robert Kucharczyk, Jaclyn Lang, Marco Maculan, Lucia Mocz, Frans Oort, Will Sawin, Beno\^it Stroh, Yunqing Tang, Jacques Tilouine,  Robert Wilms,  and Kang Zuo for helpful discussions. 
	\end{ack}

\section{Non-archimedean Brody hyperbolicity}\label{section:definitions_hyperbolicity} 
Throughout this paper, we make the following assumption. 
\begin{assumption}  
We let $K$ be an algebraically closed  field of characteristic $0$ which is complete with respect to a non-archimedean, non-trivial multiplicative norm  ${||\cdot||}\colon K\ra\RR$. We let $\OO_K$ be its valuation ring and $k$ be its residue field.
\end{assumption}

\begin{example}
Finite field extensions of $\mathbb{Q}_p$ are complete non-archimedean valued fields of characteristic zero and residue characteristic $p>0$. The completion $\mathbb{C}_p$ of $\overline{\mathbb{Q}_p}$ is also a complete non-archimedean valued of field of characteristic zero and residue characteristic $p>0$. Finally, for any field $k$ of characteristic $0$, the completion $K$ of $\overline{k(\!(t)\!)}$ endowed with the $t$-adic valuation is a complete non-archimedean valued field with residue field $k$.
\end{example}

We will use the language of adic spaces \cite{huber2, huber} in order to deal with rigid analytic varieties. In particular, we will denote by $|X|$ the underlying topological space of a rigid analytic variety as defined by Huber \cite{huber}. 
 All schemes and adic spaces (see \cite{huber2} for the general theory) will be considered as spaces over $K$ endowed with its valuation of rank $1$.  For a reduced Tate algebra $R$ which is a quotient of $K\langle T_1,\ldots, T_N\rangle$ (the completion of the polynomial ring)  we write $\Spa R$ for the affinoid space $\Spa(R,R^\circ)$. We let $\mathbb{B}^N$ be the poly-disc $\Spa K\langle T_1,\ldots,T_N\rangle$.
 
  If $X$ is a locally finite type  scheme over $K$, we let $X^{\ad}$ be the associated rigid analytic variety (see for example \cite[Proposition 0.3.3]{Berth} or \cite{BGR}). To simplify the notation, we will sometimes let $\mathbb G_m^{\ad}$ be the rigid analytic variety associated to the finite type $K$-scheme $\mathbb G_{m,K}$. 
  
  The following definition introduces the main objects of study of our paper. We stress that our definition is modeled on the observation that a complex algebraic variety $X$ over $\mathbb{C}$ is Brody hyperbolic (i.e., has no entire curves $\mathbb{C}\to X^{\an}$) if and only if for every connected finite type group scheme $G$ over $\CC$, every morphism of varieties $G^{\an}\to X^{\an}$ is constant. We refer the reader interested in complex algebraic Brody hyperbolic varieties to Kobayashi's book \cite{Kobayashi}.

\begin{definition}\label{def}
An adic space $X$ over $K$ is \emph{$K$-analytically Brody hyperbolic} if, for every connected finite type group scheme $G$ over $K$, every morphism   of adic spaces  $G^{\ad}\ra X$  is constant, i.e. factors over a $K$-rational point. 
	A locally finite type $K$-scheme   $X$ is \emph{$K$-analytically Brody hyperbolic} if 
	$X^{\ad}$ is $K$-analytically Brody hyperbolic. 
\end{definition}

\begin{remark}
Since any finite type group scheme $G$ over $K$ is reduced (as $K$ is of characteristic zero) and has a $K$-rational point (by definition),  a map $G^{\ad}\ra X$ is constant if and only if it is constant as a map of topological spaces. 
\end{remark}

\begin{remark}
The closed disc $\mathbb{B}^1$ and the open disc $\mathring{\mathbb{B}}^1$  are subgroups   of $(\mathbb{A}^{1,\ad},+)$, and any  rigid analytic variety admits non-trivial maps from $\mathbb{B}^1$. A definition of hyperbolicity using  rigid analytic groups as ``test'' objects would therefore be of no interest.
\end{remark}

   \begin{remark}\label{remark:qf} In \cite[Corollary~3.11.2]{Kobayashi}, it is shown that  the total space of a family of  Kobayashi hyperbolic varieties over $\mathbb{C}$ over a Kobayashi hyperbolic variety over $\mathbb{C}$ is Kobayashi hyperbolic. The non-archimedean analogue of this statement reads as follows (and is not hard to prove).  
 	Let $X\to Y$ be a morphism of   rigid analytic varieties over $K$. If $Y$ is $K$-analytically Brody hyperbolic and all $K$-fibers of $X\to Y$ are $K$-analytically Brody hyperbolic, then $X$ is $K$-analytically Brody hyperbolic.
 \end{remark}

  The notion  of $K$-analytic hyperbolicity introduced above (Definition \ref{def}) has not appeared before in the literature. However, the following weaker notion     was studied in the work of Cherry \cite{CherryKoba}; see also \cite{AnCherryWang, Cherry, CherryRu, LevinWang, LinWang}.
 
 \begin{definition}\label{def:cherry_hyp}
 An adic space $X$ over $K$ is \emph{$K$-analytically pure} if all morphisms $\mathbb G_{m,K}^{\ad}\to X$ are constant. A locally finite type scheme $X$ over $K$ is \emph{$K$-analytically pure} if the adic space $X^{\an}$ is $K$-analytically pure.
 \end{definition}

 Note that smooth proper curves of genus one over  $K$ with good reduction are  $K$-analytically pure. In particular, a $K$-analytically pure is not necessarily ``hyperbolic''.  This is why we avoid saying that $K$-analytically pure varieties are ``hyperbolic''.  
Part of the aim of this paper     is to show that a (more) ``correct'' non-archimedean analogue of Brody hyperbolic complex manifolds is provided by $K$-analytic Brody hyperbolicity (Definition \ref{def}).

\subsection{Hyperbolicity of affinoids}
 
It follows from Liouville's theorem on bounded holomorphic functions that a bounded domain in affine space is Brody hyperbolic.  
In the non-archimedean setting, we can state the following analogue of this fact. 

	\begin{proposition}\label{prop:aff_are_brody}
		An affinoid adic space $A$ over $K$ is $K$-analytically Brody hyperbolic. More generally, if $X$ is a   reduced connected locally finite type scheme over $K$, then any morphism $X^{\ad}\to A$ is constant.
	\end{proposition}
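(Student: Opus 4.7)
The plan is to reduce the claim to Liouville's theorem for proper smooth rigid curves, through a chain-of-curves argument. As a first reduction, since $A=\Spa R$ is affinoid, a presentation $K\langle T_1,\ldots,T_N\rangle \twoheadrightarrow R$ gives a closed immersion $A\hookrightarrow \mathbb{B}^N$. If the composite $X^{\ad}\to A\hookrightarrow \mathbb{B}^N$ factors over a $K$-rational point, that point automatically lies in the closed subspace $A$; it therefore suffices to treat $A=\mathbb{B}^N$, i.e.\ to prove that every holomorphic function $f\in\mathcal{O}(X^{\ad})$ with $|f|\le 1$ is constant.

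Next I would reduce to algebraic curves. Since $K$ is algebraically closed and $X$ is reduced and connected, any two $K$-points $x, y\in X(K)$ can be joined by a finite chain of closed irreducible algebraic curves $C_1,\ldots,C_m\subset X$ with $x\in C_1$, $y\in C_m$ and $C_j\cap C_{j+1}\ne\emptyset$ (on each irreducible component of $X$ any two closed points lie on a closed irreducible curve, and the components are linked by the connectedness of $X$). Hence it is enough to prove that $f|_{C^{\ad}}$ is constant for every closed irreducible algebraic curve $C\subset X$.

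For such a $C$, let $\nu\colon \tilde C\to C$ be the normalization and embed $\tilde C$ into its smooth projective compactification $\overline{C}$, so that the complement $\{p_1,\ldots,p_r\}=\overline{C}\setminus\tilde C$ is a finite set of closed points. The pullback $\nu^\ast f$ is then a bounded holomorphic function on $\overline{C}^{\ad}\setminus\{p_1,\ldots,p_r\}$; the Laurent expansion at each $p_j$ shows that boundedness forces the principal part to vanish, so $\nu^\ast f$ extends to a holomorphic function on the proper connected curve $\overline{C}^{\ad}$. The identification $\mathcal{O}(\overline{C}^{\ad})=K$ (via rigid GAGA, or the maximum modulus principle on proper rigid varieties) then forces $\nu^\ast f$, and therefore $f|_{C^{\ad}}$, to be constant.

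The two analytic inputs used above -- the rigid-analytic Riemann removable singularity theorem and the triviality of global holomorphic functions on proper smooth rigid curves -- are both classical. I expect the most delicate point in the write-up to be the curve-chain connectedness invoked in step two, which needs a clean reference for the statement that any two closed points of an irreducible variety over an algebraically closed field lie on a closed irreducible curve. Once that is in place, the proof is essentially the non-archimedean translation of Liouville's theorem for bounded holomorphic functions on complex affine varieties.
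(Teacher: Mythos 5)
Your argument is correct, but it takes a genuinely different route from the paper's. The paper does not pass to curves: after using Huber's description of morphisms into affinoids to reduce to showing that every global function $f$ on $X^{\ad}$ with $|f|\leq 1$ is constant, it assumes $X$ affine and normal, chooses a normal projective compactification $\overline{X}$, extends $f$ to $\overline{X}^{\ad}$ by the higher-dimensional rigid Hebbarkeitssatz (Bartenwerfer--L\"utkebohmert), and then applies GAGA on the proper $\overline{X}$: the algebraized map $\overline{X}\to\mathbb{P}^1$ has closed image avoiding $\infty$, hence is constant. You instead replace the higher-dimensional extension theorem by a chain-of-curves reduction together with the elementary one-variable removable-singularity computation (boundedness kills the principal part of the Laurent series on a punctured disc) and $\mathcal{O}(\overline{C}^{\ad})=K$; this is precisely the strategy the paper itself uses in Remark \ref{remark:liouville} for the \emph{complex-analytic} analogue of the statement. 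What each buys: the paper's route avoids the curve-joining lemma entirely, at the cost of invoking the Hebbarkeitssatz in arbitrary dimension; yours is analytically more elementary (only GAGA on curves and Laurent expansions), but needs the classical fact that two closed points of a positive-dimensional irreducible variety over an algebraically closed field lie on an irreducible curve (Mumford, \emph{Abelian Varieties}, II.6, for the quasi-projective case). One small adjustment: since $X$ is only locally of finite type and not assumed separated or quasi-compact, curves closed \emph{in $X$} may fail to be of finite type; you should chain through affine opens (consecutive ones meeting, which exists by connectedness) and take your curves closed in those affines, or in the irreducible components — this suffices, since all you need is equality of the values of $f$ along the chain, after which reducedness of $X$ upgrades pointwise constancy to constancy of the morphism, with the constant value landing in $A$ as you note.
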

	
	\begin{proof}   It suffices to prove the second statement. To do so, note that any map $X^{\ad}\to A$ to an affinoid space $A=\Spa(R,R^+)$ over $K$ is uniquely determined by a morphism from $R^+$ to the ring of global bounded functions on $X^{\ad}$; see \cite[Proposition 2.1(ii)]{huber2}. The claim then follows if we show that this ring is $\mathcal{O}_K$. Therefore, we may and do assume that  $A$ is the closed disc $\mathbb{B}^{1}$.
	
	Now, to show that $X^{\ad}\to A$ is constant, we may and do assume that $X$ is affine.  Moreover, by considering the normalization of $X$, we can also assume that $X$ is normal. Let $X\to \overline{X}$ be an open immersion with $\overline{X}$ connected, normal and projective  over $K$. By the Hebbarkeitssatz (\cite[p.~502]{conrad-conn} or \cite[Theorem~1.6]{Lutk}), any morphism $X^{\ad}\ra \mathbb{B}^{1}$ extends to a morphism  $\overline{X}^{\ad}\ra\mathbb{B}^{1}$.   By GAGA \cite{conradGAGA, kopfGAGA}, the  composite map  $\overline{X}^{\ad}\ra\mathbb{B}^{1}\ra\mathbb{P}^{1,\ad}$ is induced by an algebraic map $\overline{X}\ra\mathbb{P}^1$ whose image is closed and does not contain   $\infty$. Therefore,  since $\overline{X}$ is connected, we conclude that $\overline{X}^{\ad}\to \mathbb P^{1,\ad}$ is   constant, as required. 
	\end{proof}
	
	\begin{remark}\label{remark:liouville}
	The complex analytic analogue of Proposition \ref{prop:aff_are_brody} reads as follows. Let $D$ be a bounded domain in the affine space $\mathbb{C}^N$, and let $X$ be a   reduced  connected locally finite type  scheme over $\CC$. Then, any morphism $X^{\ad}\to D$ is constant. Indeed, since any two points in $X$ lie in the image of a reduced connected curve, we may and do assume that $X$ is a curve. Let $Y$ be a connected component of the normalization $X'$ of $X$, and note that $Y$ is a smooth quasi-projective curve over $\CC$. It suffices to show that $Y^{\ad}\to D$ is constant. Let $Y\subset \overline{Y}$ be the smooth compactification of $Y$. Let $P\in \overline{Y}\setminus Y$, and let $\Delta_P\subset \overline{Y}^{\ad}$ be an open disk around $P$ not containing any other point of $\overline{Y}\setminus Y$. Then, as the morphism $\Delta_P\setminus \{P\}\to D$ is a bounded holomorphic function, it extends to a morphism $\Delta_P\to D$. We see that $Y^{\ad}\to D$ extends a holomorphic morphism $\overline{Y}^{\ad}\to D\subset \CC^n$. As $\overline{Y}$ is projective, every global holomorphic function $\overline{Y}^{\ad}\to \CC$ is constant. We see that $\overline{Y}^{\ad}\to D$ is constant, and conclude that $Y^{\ad}\to D$ is constant.
	\end{remark}

  \subsection{Hyperbolicity of perfectoid spaces}\label{section:lemma}
Recall that in Proposition \ref{prop:aff_are_brody} we showed that affinoid adic spaces do not admit non-constant maps from a connected algebraic variety. Similarly, perfectoid spaces over $K$ (see \cite{ScholzePerfectoid} for definitions) also do not admit non-constant morphisms from an algebraic variety. Even better, they do not admit any non-constant morphisms from a   connected rigid analytic variety (Proposition \ref{prop:rigid_to_perf}).  This property is fairly obvious to anyone accustomed to Scholze's theory of perfectoid spaces;  we include it here due to a lack of a suitable reference in the literature.
     
   \begin{proposition}\label{prop:rigid_to_perf} Assume $K$ has positive residue characteristic.
   	Let $X$ be a reduced connected rigid analytic variety over $K$ and let $Y$ be a perfectoid space over $K$. Then any  morphism of adic spaces
   	$X\to Y$ is constant.  
   \end{proposition}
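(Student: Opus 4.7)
The plan is to reduce to the case where $X$ is a connected reduced affinoid and $Y=\Spa(R,R^+)$ is affinoid perfectoid, and then force constancy via a Cauchy-type derivative estimate that exploits the $p$-power root structure of $R$. For the reduction, I would cover $Y$ by affinoid perfectoid opens and argue that the fibre $f^{-1}(y)$ is open in $X$ for every $y\in Y$: given $x\in X$ with $f(x)=y$, pick an affinoid perfectoid neighborhood $V\ni y$ and a connected affinoid open $W\ni x$ contained in $f^{-1}(V)$; once the affinoid case is known, $f|_W$ is constant and hence equal to $y$ on $W$, so $f^{-1}(y)$ is open and the connectedness of $X$ makes $f$ globally constant.

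In the affinoid case, $f$ corresponds to a continuous morphism of Huber pairs $\phi\colon(R,R^+)\to(A,A^+)$, and the goal becomes to show $\phi(R)\subseteq K\subseteq A$. I would treat first an element $r\in R^+$ of the form $(r^\flat)^\sharp$ with $r^\flat\in R^{\flat,+}$, since such $r$ admits exact $p^n$-th roots $r_n\in R^+$ for every $n$ coming from the perfection of the tilt. Setting $t=\phi(r)$ and $t_n=\phi(r_n)$, one has $t_n^{p^n}=t$ in $A^+$ with $|t_n|_{\sup}\leq 1$, so differentiating yields $dt = p^n t_n^{p^n-1}\,dt_n$ in $\Omega^1_{A/K}$. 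At a smooth point $x\in X$, a Cauchy-type estimate on a polydisc of fixed positive radius around $x$ bounds the cotangent vector $dt_n|_x$ uniformly in $n$, so that the norm of $dt|_x$ is bounded by $|p|^n$ times a constant and therefore vanishes. Hence $dt=0$ on the smooth locus of $X$, which is open and dense since $\mathrm{char}\,K=0$, and working on irreducible components forces $t$ to be constant on $X$, i.e., $\phi(r)\in K$. For a general $r\in R^+$, approximate it $\varpi$-adically by sharp elements and use that $K$ is closed in $A$ to deduce $\phi(r)\in K$; inverting $\varpi$ then gives $\phi(R)\subseteq K$.

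The main obstacle is the uniform-in-$n$ bound on $dt_n|_x$: one must translate the sup-norm bound $|t_n|_{\sup}\leq 1$ into a Gauss-norm bound on the first-order Taylor coefficients of $t_n$ in local coordinates at $x$. This is the non-archimedean analogue of ``bounded holomorphic functions have bounded derivatives'' and is essentially what makes the argument go through. The density of sharp elements in $R^+$ via the approximation properties of perfectoid rings, and the passage from the smooth locus to all of $X$ via irreducible components, are comparatively routine.
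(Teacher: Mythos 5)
Your proof is correct, but at the decisive step it takes a genuinely different route from the one in the paper. Both arguments share the same skeleton: reduce to a connected reduced affinoid source and an affinoid perfectoid target; reduce further, via the smooth locus and an \'etale chart, to a polydisc; observe that a sharp element $r=(r^\flat)^\sharp$ has exact $p^n$-th roots $r_n$ for all $n$; show that $\phi(r)$ must lie in $K$; and finally propagate this to all of $R^+$ (and then $R$) by the $\varpi$-adic density of $\ZZ$-linear combinations of sharps, using that $K$ is closed. Where the two proofs diverge is in \emph{why} $\phi(r)$ is a constant. The paper proves outright that the Tate algebra $K\langle T_1,\dots,T_n\rangle$ has no nonconstant element admitting arbitrary $p$-power roots, by the chain of multiplicative-monoid isomorphisms $\varprojlim_{x\mapsto x^p}\OO_K\langle T\rangle\cong\varprojlim_{x\mapsto x^p}(\OO_K/p)[T]\cong\varprojlim_{x\mapsto x^p}\OO_K$; the middle step works because a $p$-th power in $(\OO_K/p)[T]$ has degree divisible by $p$, so an element of $\varprojlim$ must have degree $0$. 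You instead differentiate the relation $t_n^{p^n}=t$ to get $dt=p^nt_n^{p^n-1}dt_n$ in $\widehat\Omega^1$, and then kill $dt|_x$ on the smooth locus using $|p|^n\to 0$ together with a uniform bound on $\|dt_n|_x\|$. That uniform bound is exactly the non-archimedean Cauchy estimate, and it does hold: identifying a neighborhood of $x$ with a polydisc (as in the paper's reduction), $t_n$ pulls back to an element of $\OO_K\langle T_1,\dots,T_m\rangle$ since $|t_n|_{\sup}\le 1$, and the Gauss norm being the sup norm over an algebraically closed $K$ gives $|a_{e_i}^{(n)}|\le 1$ for the linear Taylor coefficients, hence $\|dt_n|_x\|\le 1$. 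The two proofs thus trade a clean algebraic reduction mod $p$ (short, no norms on cotangent spaces, no differentials) for an argument that is closer in spirit to the Liouville-type estimate in Remark~\ref{remark:liouville} and perhaps more illuminating as to why ``perfectoid means hyperbolic.'' One small caveat worth spelling out in a written version: the passage from $dt|_x=0$ on the (possibly disconnected) smooth locus to $t$ constant on all of $X$ needs the identity theorem plus continuity across components, exactly as the paper's implicit reduction to the smooth case does; this is routine but should not be elided.
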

   \begin{proof}  By a standard induction argument on the dimension of $X$, replacing $X$ by its (dense open) smooth locus if necessary, we may and do assume that $X$ is smooth.  Moreover, 
   	we may and do assume that  
   	$X$ and $Y$ are affinoid and equal to $\Spa(R,R^\circ)$ and $\Spa (P,P^{+})$, respectively.  Let  $\varphi\colon P\ra R$ be the morphism of rings associated to $X\to Y$.   
   	
   	We first  prove that $R$ does not contain any non-constant element having arbitrary $p$-th roots. (That is, we first treat the case   $P=K\langle T^{1/p^{\infty}}\rangle$.)  Fix a $K$-rational point $x$ of $X$. Since $X$ is smooth, it is locally \'etale over a poly-disc $\mathbb{B}^n$ (see \cite[Corollary 1.1.51]{Ayoub}) and there is an open neighborhood  of $x$ which is isomorphic to an open neighborhood of a rational point $y$ in $\mathbb{B}^n$ (see \cite[Remark 1.2.4]{Ayoub}). Thus, since concentric poly-discs form a basis of open neighborhoods of $y$, by the identity theorem (see \cite[Lemma 2.1.4]{conrad-conn}), we may and do assume that $X$ is $\mathbb{B}^n$, so that $R$ equals the Tate algebra $K\langle T_1,\ldots,T_n\rangle$.  
   	Note that, as in the proof of \cite[Lemma~3.4.(i)]{ScholzePerfectoid}, we have an isomorphism of multiplicative monoids
   	\begin{eqnarray*}
   		\varprojlim_{x\mapsto x^p} \mathcal{O}_K\langle T_1,\ldots,T_n\rangle &\cong& \varprojlim_{x\mapsto x^p} \mathcal{O}_K\langle T_1,\ldots,T_n\rangle/p.
   	\end{eqnarray*}
   	Therefore, we have  the following isomorphisms of multiplicative monoids 
   	\begin{eqnarray*}
   		\varprojlim_{x\mapsto x^p} \mathcal{O}_K\langle T_1,\ldots,T_n\rangle &\cong& \varprojlim_{x\mapsto x^p} \mathcal{O}_K\langle T_1,\ldots,T_n\rangle/p
   		 \\ &\cong& \varprojlim_{x\mapsto x^p} (\mathcal{O}_K/p)[ T_1,\ldots,T_n]\cong \varprojlim_{x\mapsto x^p} \mathcal{O}_K/p  \cong \varprojlim_{x\mapsto x^p} \mathcal{O}_K.
   	\end{eqnarray*}
   	We deduce that $R^\circ = \OO_K\langle T_1,\ldots, T_n\rangle$ does not contain any non-constant element with arbitrary $p$-th roots.  By multiplying with a constant, this implies the statement for $R$ as well, as desired.
   	
  	Let $$P^\flat\cong\varprojlim_{x\mapsto x^p} P$$ be the tilt of $P$; see \cite[Proposition 5.17]{ScholzePerfectoid}. Consider the  multiplicative map $\sharp\colon P^\flat\ra P$, $y\mapsto y^\sharp$ given by the projection to the first term. Since $R$ does not contain any non-constant element having all $p$-th roots (as shown above), we deduce that $\varphi(\sharp(P^{\flat}))$ is contained in $K$.  Because the canonical map of rings $ P^{\flat\circ}/p^{\flat}\to P^\circ/p$ is an isomorphism (see \cite[Lemma 6.2]{ScholzePerfectoid}), it follows that, for any element $x$ in $P^\circ$, there exist an element  $y$ in $P^{\flat\circ}$ and an element $x'$ in $P^{\circ}$ such that $x=y^\sharp+px'$. Fix an element $x_0$ in $P^\circ$ and define inductively a sequence $x_i\in P^\circ$ and $y_i\in P^{\flat\circ}$ by 
   	$$
   	x_i=y_i^\sharp+px_{i+1}
   	$$
   	Note that the sequence $(z_n)_{n=0}^\infty$ defined by $z_n=\sum_{i=0}^n p^iy^\sharp_i=x_0-p^{n+1}x_{n+1}$ converges to $x_0$. By what we proved above, we see that $\varphi(z_n)=\sum_{i=0}^n p^i\varphi(y_i^\sharp)\in K$. We deduce that $\varphi(x_0)$ is a limit of elements of $K$, and thus  lies in $K$. This  proves that $\varphi$ factors over $K$ as wanted. 
   \end{proof}

   \begin{remark} 
   	Proposition \ref{prop:rigid_to_perf} can be generalized to a   connected (not necessarily reduced) rigid analytic variety $X$ by saying that its image is a single rational point of $Y$. Indeed, it suffices to take the reduced closed subvariety of $X$.  On  the other hand, we remark that in this (more general) case the morphism may not factor over $\Spa K$. For example, 
   	the map $T^{1/p^{n}}\mapsto 1+\frac{1}{p^n}T$ induces a  morphism from the perfectoid algebra $K\langle T^{1/p^\infty}\rangle$ to the (non-reduced) Tate algebra $K\langle T\rangle/T^2$ which clearly does not factor over $K$.
   \end{remark}
   
      \begin{remark}
   Let $X$ be an adic space over $K$. Let $P\subset X$ be an open adic subspace which is perfectoid.  If $S$ is a connected reduced rigid analytic variety over $K$  and $S\to X$ is a non-constant morphism, then the image of $S\to X$ lies in the complement of $P$. In particular, following the complex-analytic terminology \cite[\S3.2]{Kobayashi},  one could say that $X$ is ``hyperbolic modulo the complement of $P$''. 
   \end{remark}

	\subsection{Descending hyperbolicity along finite \'etale covers}
If $X\to Y$ is a finite \'etale morphism of complex algebraic varieties and  $X$ is Brody hyperbolic (resp. Kobayashi hyperbolic), then $Y$ is Brody hyperbolic (resp. Kobayashi hyperbolic); see for instance \cite[Theorem~3.2.8.(2)]{Kobayashi}. We now show the analogue of the latter statement for $K$-analytically Brody hyperbolic varieties.

\begin{proposition}\label{prop:chevalley_weil} \label{prop:cv_stack}  
	Let $X\to Y$ be a   finite \'etale morphism of adic spaces    over $K$. If $X$ is $K$-analytically Brody hyperbolic, then $Y$ is $K$-analytically Brody hyperbolic.
\end{proposition}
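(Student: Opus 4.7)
The plan is to emulate the complex-analytic proof of the analogous statement (compare \cite[Theorem~3.2.8.(2)]{Kobayashi}): an entire curve $\mathbb{C}\to Y^{\an}$ lifts along a finite \'etale cover $X\to Y$ by simple-connectedness of $\mathbb{C}$, and is then forced to be constant by Brody hyperbolicity of $X$. In our setting the test objects are the spaces $G^{\an}$ for $G$ a connected finite type group scheme over $K$; these are generally \emph{not} simply connected, so one has to trivialise the pulled-back cover by passing to an isogenous group.

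Given $f\colon G^{\an}\to Y$ with $G$ connected finite type over $K$, the first step is to form the pullback
\[
p\colon Z:=G^{\an}\times_Y X\longrightarrow G^{\an},
\]
which is a finite \'etale morphism of adic spaces. One picks the identity $e\in G(K)$, lifts $f(e)$ to some $\tilde e\in Z(K)$, and lets $Z_0\subseteq Z$ be the connected component containing $\tilde e$; then $Z_0\to G^{\an}$ is a pointed connected finite \'etale cover.

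The crucial step is to algebraise this cover: one wants to produce a pointed connected finite type group scheme $G_0$ over $K$ together with an isogeny $\varphi\colon G_0\to G$ whose analytification is identified with $Z_0\to G^{\an}$. This rests on two ingredients. First, in characteristic zero, every pointed connected finite \'etale cover of a connected algebraic group carries a unique group structure making the cover into an isogeny. Second, every finite \'etale analytic cover of $G^{\an}$ is the analytification of a finite \'etale algebraic cover of $G$; this rigid-GAGA assertion can be reduced, via Chevalley's structure theorem in characteristic zero, to the three cases of abelian varieties (where it follows from the classical rigid GAGA, as abelian varieties are proper), tori (handled by the explicit description of finite \'etale covers of $\mathbb{G}_m^{\an}$ as the power maps $t\mapsto t^n$), and unipotent groups (which are analytically simply connected in characteristic zero). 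This algebraisation is the main obstacle of the argument and the point at which rigid-analytic input is genuinely needed; in a careful write-up one would isolate it as a preliminary lemma.

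Once $G_0$ has been produced, the composite $g\colon G_0^{\an}=Z_0\hookrightarrow Z\to X$ is a morphism from the analytification of a connected finite type group scheme into the $K$-analytically Brody hyperbolic space $X$, and is therefore constant by hypothesis. Post-composing with $X\to Y$ and using the definition of the fibre product $Z$ shows that $G_0^{\an}\to G^{\an}\xrightarrow{f} Y$ is constant; since $\varphi^{\an}$ is surjective (a finite \'etale cover of the connected space $G^{\an}$), $f$ itself is constant, proving that $Y$ is $K$-analytically Brody hyperbolic.
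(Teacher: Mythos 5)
Your overall strategy coincides with the paper's: pull the finite \'etale cover back along $G^{\an}\to Y$, pass to a connected component, algebraize it to an isogeny of algebraic groups, and conclude from the hyperbolicity of $X$ together with surjectivity of the isogeny. The group-theoretic ingredient is fine in characteristic zero (every pointed connected finite \'etale cover of a connected algebraic group is automatically Galois with abelian deck group and carries a unique group structure making it an isogeny), but it is not a triviality: the paper first passes to the Galois closure of the component and cites Brion--Szamuely for exactly this; you should do the same or give a reference.

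The genuine gap is in the step you yourself identify as the crux: the claim that every finite \'etale analytic cover of $G^{\an}$ is the analytification of an algebraic cover. Your proposed proof --- d\'evissage along Chevalley's exact sequence to the cases of abelian varieties, tori and unipotent groups --- is not carried out and would be hard to carry out: a finite \'etale cover of $G^{\an}$ does not obviously decompose along the fibration $G^{\an}\to A^{\an}$ (one would need a homotopy exact sequence or a descent argument for analytic covers of this fibration, and the linear part is only an extension of a torus by a unipotent group, not a product). Moreover, your two ``easy'' base cases are themselves nontrivial theorems when the residue characteristic is $p>0$: that all finite \'etale covers of $\mathbb{G}_m^{\an}$ are Kummer, and that $\mathbb{A}^{n,\an}$ has no nontrivial finite \'etale covers, are precisely special cases of the non-archimedean Riemann existence theorem and cannot be read off ``explicitly'' --- for instance, the closed unit disc over $\mathbb{C}_p$ admits a nontrivial connected finite \'etale self-cover of degree $p$ (e.g.\ $y\mapsto y^p-y$), so analytic simple-connectedness assertions in this setting require real input. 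The paper bypasses all of this by invoking L\"utkebohmert's Riemann existence theorem \cite[Theorem~3.1]{luk}, which algebraizes the finite \'etale cover of $G^{\an}$ for an arbitrary algebraic variety, with no group-theoretic d\'evissage; replacing your sketched reduction by that citation closes the gap, and the rest of your argument then goes through as in the paper.
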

\begin{proof}   
	 Let $G$ be a finite type connected group scheme over $K$,   
	and  let $\varphi:G^{\ad}\to Y^{\ad}$ be  a morphism.   Consider the Cartesian diagram
	\[ 
	\xymatrix{V'' \ar[rr] \ar[d] & & X^{\ad} \ar[d] \\ G^{\ad} \ar[rr]_{\varphi} & & Y^{\ad} } 
	\]  
	
	Let $V'$ be a connected component of $V''$, and note that $V'\to G^{\an}$ is finite \'etale. Let $V\to V'\to G^{\an}$ be its Galois closure,  so that $V\to G^{\an}$ is finite \'etale Galois.
	We claim that there is a finite type group scheme $H$ over $K$ and a central isogeny $H\to G$ such that the associated morphism $H^{\ad}\to G^{\ad}$ is isomorphic to $V\to G^{\ad}$. 
	 To prove this, note that,  by the non-archimedean analogue of Riemann's existence theorem, the finite \'etale morphism  $V\to G^{\ad}$ algebraizes; see  \cite[Theorem.~3.1]{luk}.  Since $K$ is an algebraically closed field of characteristic zero and $H\to G$ is finite \'etale Galois, there is a   group structure on the connected $K$-scheme $H$ such that $H\to G$ is a homomorphism; see \cite{BrionSzamuely}. This proves the claim.

	 Since  $X$ is $K$-analytically Brody hyperbolic,  the induced morphism $H^{\ad}\to X^{\ad}$ is constant.   It follows readily that the morphism $\varphi:G^{\ad}\to Y^{\ad}$ is  constant.
	\end{proof}

\subsection{Testing hyperbolicity on algebraic groups with good reduction}
The main result of this section (Theorem \ref{thm:test_on_gr}) shows that the hyperbolicity of a variety over $K$ can be tested on \emph{analytic} maps from $\mathbb{G}_m$ and \emph{algebraic} maps from abelian varieties with good reduction. Our proof uses the (algebraic) classification theory of algebraic groups, GAGA for rigid analytic varieties, and   a uniformization theorem of Bosch--L\"utkebohmert; we note that   Cherry also used this  uniformization theorem to prove  the non-archimedean version of  Bloch's conjecture (Theorem \ref{thm:bloch}).     We start with two preliminary lemmas.

\begin{lemma}\label{lemma:structure_result} Let $X$ be an adic space over $K$. The following are equivalent.
\begin{enumerate}
\item The adic space $X$ is $K$-analytically Brody hyperbolic.
\item For every abelian variety $A$ over $K$, every morphism $A^{\ad}\to X^{\ad}$ is constant, and every morphism $\mathbb{G}_m^{\ad}\to X^{\ad}$ is constant. 
\end{enumerate}
\end{lemma}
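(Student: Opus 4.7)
The forward implication is immediate from Definition \ref{def}, since $\mathbb{G}_m$ and every abelian variety over $K$ are connected finite type group schemes. For the reverse direction, I assume condition (2) and let $f\colon G^{\ad}\to X^{\ad}$ be a morphism with $G$ a connected finite type group scheme over $K$; the task is to show $f$ is constant. My plan combines (i) a translation trick to promote the $\mathbb{G}_m$-hypothesis to handle $\mathbb{G}_a$, (ii) the structure theory of connected algebraic groups together with faithfully flat descent for short exact sequences, and (iii) the Bruhat decomposition to handle the semisimple case.

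First, I will show that every morphism $f\colon\mathbb{A}^{1,\ad}\to X^{\ad}$ is constant. Hypothesis (2) applied to the restriction $f|_{\mathbb{G}_m^{\ad}}$ gives that $f$ factors through some $x_0\in X(K)$ on $\mathbb{A}^{1,\ad}\setminus\{0\}$. For each $a\in K^\times$, precomposing $f$ with the translation automorphism $t_a\colon x\mapsto x+a$ and again restricting to $\mathbb{G}_m^{\ad}$ yields a constant morphism by the same hypothesis; pulling back through $t_a$ shows that $f$ factors through some $x_a\in X(K)$ on $t_a(\mathbb{G}_m^{\ad})=\mathbb{A}^{1,\ad}\setminus\{a\}$. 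The non-empty overlap $\mathbb{A}^{1,\ad}\setminus\{0,a\}$ forces $x_0=x_a$, so $f$ is constant on all of $\mathbb{A}^{1,\ad}$. Iterating, the same conclusion will apply to any connected solvable linear algebraic group $S$ over $K$, since such an $S$ is an iterated extension of copies of $\mathbb{G}_a$ and $\mathbb{G}_m$ and faithfully flat descent along each smooth surjection in a composition series propagates constancy.

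Next, I will treat a connected semisimple group $H$ over $K$ using the Bruhat decomposition. The group $H$ contains the big cell $\Omega=U^-TU^+$ as an open dense subset, with multiplication $U^-\times T\times U^+\xrightarrow{\sim}\Omega$ an isomorphism of varieties, where $U^{\pm}$ are unipotent and $T$ is a maximal torus; hence $\Omega$ is a product of connected solvable groups and every morphism $\Omega^{\ad}\to X^{\ad}$ is constant by the previous paragraph. In particular $f|_{\Omega^{\ad}}$ factors through some $x\in X(K)$, and for each $g\in H(K)$ the same reasoning applied to $y\mapsto f(gy)$ shows $f|_{(g\Omega)^{\ad}}$ is also constant; the non-empty overlap of the two dense opens $\Omega$ and $g\Omega$ in the irreducible variety $H$ forces this constant to equal $x$. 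Thus $f$ factors through $\{x\}\subset X^{\ad}$ on the dense open $\bigcup_{g\in H(K)}(g\Omega)^{\ad}\subset H^{\ad}$, and the identity theorem for morphisms from a connected reduced rigid analytic variety (as used in the proof of Proposition \ref{prop:aff_are_brody} via \cite[Lemma~2.1.4]{conrad-conn}) will upgrade this to constancy on all of $H^{\ad}$.

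The remainder is assembly. A general connected linear algebraic group $L$ sits in an extension $1\to R(L)\to L\to L/R(L)\to 1$ with $R(L)$ the connected solvable radical and $L/R(L)$ connected semisimple, so the two previous steps together with faithfully flat descent along the smooth surjection $L\to L/R(L)$ give that every morphism $L^{\ad}\to X^{\ad}$ is constant. Finally, Chevalley's theorem provides an exact sequence $1\to L\to G\to A\to 1$ with $L$ connected linear and $A$ an abelian variety; a last descent along $G\to A$ reduces the constancy of $f$ to the hypothesised constancy of the induced morphism $A^{\ad}\to X^{\ad}$. The main obstacle will be the semisimple case: passing from \emph{constant on the big Bruhat cell} to \emph{constant on all of $H^{\ad}$} demands both the identity theorem and the use of left translations to cover $H^{\ad}$ by dense opens of already-known type. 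A secondary technical point is justifying faithfully flat descent in the adic category along each smooth surjection $\pi\colon G_1\to G_2$ of algebraic groups used above, which holds because $\pi^{\ad}$ is again smooth and surjective.
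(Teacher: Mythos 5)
Your proposal is correct but takes a considerably longer route than the paper's proof. Both arguments start from Chevalley's theorem, which splits $G$ into an extension $1\to H\to G\to A\to 1$ with $H$ connected affine and $A$ an abelian variety, and both conclude by descending a constant-on-cosets morphism from $G^{\ad}$ to $A^{\ad}$. The divergence is in the treatment of the linear part $H$: the paper disposes of it in one stroke by noting that any two $K$-points of a smooth connected affine algebraic group over $K$ lie on a chain of images of morphisms $\mathbb{A}^1_K\setminus\{0\}\to H$, whereas you essentially re-derive this $\mathbb{G}_m$-chain-connectivity the long way, via the translation trick for $\mathbb{A}^1$, composition series for solvable groups, the Bruhat cell for semisimple groups, and the solvable radical for general linear groups. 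Each of your steps is sound; in particular the big cell $\Omega\cong U^-\times T\times U^+$ is, as a variety, itself a solvable algebraic group, so your solvable case applies to it directly, and descent along a $\mathbb{G}_a$- or $\mathbb{G}_m$-torsor is justified by Zariski-local triviality, while the final descent along $G\to A$ is the same (implicit) step the paper also uses. Your translation trick for $\mathbb{A}^1$ is a nice self-contained observation that the paper does not need, but the Bruhat decomposition is far heavier than this lemma requires. One genuine virtue of your longer route: it makes explicit that one really does need a chain of $\mathbb{G}_m$-curves and not a single one, since for example every morphism of schemes $\mathbb{G}_m\to\mathbb{G}_m^2$ is a translated cocharacter and so misses two generic points of the torus; the paper's one-line formulation glosses over this.
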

\begin{proof} We follow the proof of   \cite[Lemma~2.4]{JKam}. It suffices to show that $(2)~\implies~(1)$.  Since $K$ is of characteristic zero,  a finite type connected group scheme over $K$ is   smooth, integral and quasi-projective over $K$ \cite[Tag~0BF6]{stacks-project}. In particular, by Chevalley's theorem on algebraic groups \cite{conradChev}, there is a unique affine normal subgroup scheme $H$ of $G$ such that $G/H$ is an abelian variety over $K$. Since $H$ is a smooth connected affine algebraic group, every two points lie on the image of a morphism of $K$-schemes $\mathbb A^1_{K}\setminus\{0\}\to H$.  Thus, as every morphism $\mathbb{G}_{m,K}^{\ad}\to X$ is constant (by $(2)$), we see that every morphism $H^{\ad}\to X$ is constant. In particular, every morphism $G^{\ad}\to X$ factors via a morphism $(G/H)^{\ad}\to X$. However, since $G/H$ is an abelian variety, we conclude that $(G/H)^{\ad}\to X$ is constant (by $(2)$). This concludes the proof.
\end{proof}

	\begin{lemma}\label{lem:gaga}  
		If $X$ is a   finite type separated scheme  over $K$ and  $A$ is an abelian variety over $K$, then every morphism $\varphi\colon A^{\ad}\to X^{\ad}$ is algebraic. 
		\end{lemma}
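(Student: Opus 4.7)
The plan is to reduce to the case of a proper target by compactifying $X$, and then to invoke rigid-analytic GAGA. By Nagata's compactification theorem, there is an open immersion $j\colon X \hookrightarrow \overline{X}$ with $\overline{X}$ proper over $K$; composing gives $\overline{\varphi} := j^{\ad} \circ \varphi\colon A^{\ad} \to \overline{X}^{\ad}$.

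Since $A$ and $\overline{X}$ are both proper over $K$, so is $A \times_K \overline{X}$. The graph $\Gamma_{\overline{\varphi}} \subset A^{\ad} \times \overline{X}^{\ad} \cong (A \times_K \overline{X})^{\ad}$ is a closed analytic subvariety because $\overline{X}^{\ad}$ is separated. By rigid-analytic GAGA for the proper scheme $A \times_K \overline{X}$ \cite{conradGAGA, kopfGAGA}, $\Gamma_{\overline{\varphi}}$ is the analytification of a unique closed subscheme $\Gamma \subset A \times_K \overline{X}$. Moreover, the first projection $\Gamma_{\overline{\varphi}} \to A^{\ad}$ is an isomorphism of rigid analytic varieties, so by the same GAGA statement its algebraic counterpart $\Gamma \to A$ is an isomorphism of $K$-schemes. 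Hence $\Gamma$ is the graph of a morphism $f\colon A \to \overline{X}$ with $f^{\ad} = \overline{\varphi}$.

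It remains to check that $f$ factors through the open subscheme $X \hookrightarrow \overline{X}$. The preimage $U := f^{-1}(X) \subset A$ is open. For every $a \in A(K) = A^{\ad}(K)$ we have $f(a) = \overline{\varphi}(a) = j^{\ad}(\varphi(a)) \in X(K) \subset \overline{X}(K)$, so $a \in U$; thus $A(K) \subset U$. Since $K$ is algebraically closed and $A$ is a smooth variety, $A(K)$ is Zariski-dense in $A$, and therefore $U = A$. This exhibits $\varphi$ as the analytification of the algebraic morphism $A \to X$ induced by $f$. The main input is rigid-analytic GAGA for the proper $K$-scheme $A \times_K \overline{X}$, which is by now standard; no serious obstacle is anticipated beyond this, as the remaining steps are formal consequences of Nagata's compactification theorem and the Zariski density of $K$-points on $A$.
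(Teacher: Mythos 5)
Your proof is correct and follows essentially the same route as the paper: compactify $X$ via Nagata and algebraize the composite $A^{\ad}\to \overline{X}^{\ad}$ by rigid-analytic GAGA for proper $K$-schemes. The only difference is that you spell out the standard graph argument behind the GAGA algebraization of morphisms and make explicit (via Zariski density of $A(K)$) why the algebraic map factors through the open subscheme $X$, steps the paper leaves implicit.
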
 
		\begin{proof} 
		We first use Nagata's theorem to compactify $X$. Thus,  let  $\overline{X}$ be a proper scheme over $K$ and let $X\subset \overline{X}$ be an open immersion.  Now, by GAGA for proper schemes over $K$ \cite{conradGAGA, kopfGAGA}, the composed morphism $A^{\ad}\to X^{\ad} \to \overline{X}^{\ad}$  algebraizes.  In particular, the morphism $\varphi$ algebraizes.
	\end{proof}

	We now show that $K$-analytic Brody hyperbolicity can be tested on analytic maps from $\mathbb{G}_m^{\an}$ and algebraic maps from abelian varieties.
	\begin{proposition}\label{prop:obvious}
 	Let $X$ be a  finite type separated scheme over $K$. The following are equivalent.
	\begin{enumerate}
		\item $X$ is $K$-analytically Brody hyperbolic.
		\item For every abelian variety $A$ over $K$, every morphism $A\to X$ of schemes over $K$ is constant, and $X$ is $K$-analytically pure (Definition \ref{def:cherry_hyp}).
	\end{enumerate}
	\end{proposition}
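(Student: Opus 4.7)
The plan is to prove the non-trivial direction $(2)\Rightarrow(1)$ by reducing the analytic problem to an algebraic one via GAGA, and then invoking the structural reduction provided by Lemma \ref{lemma:structure_result}.

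The direction $(1)\Rightarrow(2)$ is immediate: abelian varieties and $\mathbb{G}_{m,K}$ are connected finite type group schemes over $K$, so by Definition \ref{def}, every analytic morphism from $A^{\an}$ or $\mathbb{G}_m^{\an}$ into $X^{\an}$ is constant; restricting to algebraic maps $A\to X$ gives the first half of $(2)$, and $K$-analytic purity is the second half.

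For $(2)\Rightarrow(1)$, by Lemma \ref{lemma:structure_result} it suffices to verify that, for every abelian variety $A$ over $K$, every \emph{analytic} morphism $\varphi\colon A^{\an}\to X^{\an}$ is constant (the required constancy of maps from $\mathbb{G}_m^{\an}$ is precisely the second clause of $(2)$). Here is where Lemma \ref{lem:gaga} enters: since $X$ is a finite type separated scheme over $K$ and $A$ is an abelian variety (in particular proper over $K$), the morphism $\varphi$ is algebraic, i.e.\ it comes from a morphism of $K$-schemes $A\to X$. By the first clause of $(2)$, such a morphism is constant, so $\varphi$ is constant, as required.

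There is no real obstacle: the only slightly non-trivial input is the algebraization of $\varphi\colon A^{\an}\to X^{\an}$, which is exactly Lemma \ref{lem:gaga} and follows from Nagata compactification together with GAGA for proper $K$-schemes. Thus both clauses of Lemma \ref{lemma:structure_result} are established, which gives the $K$-analytic Brody hyperbolicity of $X$ and completes the proof.
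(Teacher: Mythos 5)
Your proposal is correct and follows the paper's own argument essentially verbatim: the trivial direction $(1)\Rightarrow(2)$, then for $(2)\Rightarrow(1)$ the reduction via Lemma \ref{lemma:structure_result} to maps from abelian varieties and $\mathbb{G}_m^{\an}$, with analytic maps from abelian varieties algebraized by Lemma \ref{lem:gaga} and hence constant by hypothesis. No differences worth noting.
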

	\begin{proof} Assume $(2)$ holds.   In order to prove that $X^{\ad}$ is $K$-analytically Brody hyperbolic, let $A^{\ad}\ra X^{\ad}$ be a morphism. By  Lemma \ref{lem:gaga},  this morphism is algebraic, and hence constant by $(2)$. Therefore,  by Lemma \ref{lemma:structure_result}, as $X$ is $K$-analytically pure,   we conclude that $X$ is $K$-analytically Brody. This shows that $(2)\implies (1)$. The other implication is straightforward.
	\end{proof}

 The uniformization theorem of Bosch--L\"utkebohmert that we require reads as follows.

 \begin{theorem}[Bosch--L\"utkebohmert]\label{thm:raynaud}  Let $K$ be a complete non-archimedean algebraically closed  field.
 	Let $A$ be an abelian variety over $K$. Then the following data exists.
 	\begin{enumerate}
 	\item A semi-abelian variety $G$ over $K$;
 	\item  An abelian variety $B$ over $K$ with good reduction over $\OO_K$;
 	\item  A surjective morphism of $K$-group schemes $G\to B$ whose kernel is a torus $\mathbb{G}_{m,K}^{\dim G - \dim B}$,   and
 	\item  A topological covering $G^{\an}\to A^{\an}$.
 	\end{enumerate}
 \end{theorem}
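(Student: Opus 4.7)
The plan is to follow Bosch--L\"utkebohmert's strategy via N\'eron models and Raynaud extensions. First I would take the connected component of the N\'eron model $\mathcal{A}^0$ of $A$ over $\OO_K$. Its special fibre $\mathcal{A}^0_k$ is a connected commutative algebraic group over $k$, and Chevalley's structure theorem produces a canonical exact sequence
$$0 \to T_k \to \mathcal{A}^0_k \to B_k \to 0,$$
where $T_k$ is a torus and $B_k$ is an abelian variety over $k$.

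Next I would invoke Raynaud's extension theorem to lift this special-fibre decomposition to an exact sequence of formal group schemes over $\mathrm{Spf}(\OO_K)$. Algebraizing the abelian quotient via Grothendieck's algebraization theorem, and using that any formal torus over $\OO_K$ algebraizes to a split torus (here one uses that $K$ is algebraically closed, so that $T_k$ is already split), yields an exact sequence of algebraic groups over $K$
$$0 \to T \to G \to B \to 0,$$
with $T \cong \GG_{m,K}^{r}$ for $r = \dim T_k = \dim G - \dim B$, and with $B$ an abelian variety over $K$ having good reduction over $\OO_K$. This produces items (1)--(3) of the conclusion.

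For item (4), I would construct the covering map $G^{\an} \to A^{\an}$ as follows. The formal completion $\widehat{\mathcal{A}}^0$ embeds as an open ``tube'' inside $A^{\an}$; after extending along the torus direction using the group law (pushing the tube around by translation on $A^{\an}$), one obtains a canonical morphism of rigid analytic groups $G^{\an}\to A^{\an}$. One then verifies that its kernel is a discrete subgroup $M\subset G(K)$ whose image in $B(K)$ must be zero (since $B$ has good reduction and $M$ is discrete in a group that is ``compact modulo the torus''), so that $M\subset T(K)$, and a pairing argument using the dual uniformization shows $M$ has rank exactly $r$. Discreteness of $M$ then gives that $G^{\an}\to A^{\an}$ is a topological covering.

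The technical heart, and the main obstacle, is Raynaud's lifting step: producing the formal extension over $\OO_K$ that refines the Chevalley decomposition of the special fibre compatibly with the rigid analytic structure on $A^{\an}$ requires delicate control over deformations of the special fibre, and is where the Bosch--L\"utkebohmert analysis is most substantial. Once this is in place, both the algebraic structure of $G$ and the identification $A^{\an} \cong G^{\an}/M$ as rigid analytic groups follow from the non-degeneracy of the natural pairing between $M$ and the character lattice of $T$, which encodes the polarization of $A$.
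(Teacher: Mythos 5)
The paper does not actually prove this theorem: its ``proof'' is a one-line citation to Bosch--L\"utkebohmert \cite[Theorem~8.8 and Remark~8.9]{BLII}. Your proposal attempts to reconstruct the underlying argument, and the high-level strategy you describe (a Chevalley decomposition on the reduction, a Raynaud extension lifting it, and analytic uniformization of $A^{\an}$ as a quotient of $G^{\an}$ by a lattice of full rank in the torus) is indeed the spirit of what Bosch--L\"utkebohmert do.

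However, your very first step does not go through as stated. You propose to ``take the connected component of the N\'eron model $\mathcal{A}^0$ of $A$ over $\OO_K$.'' Since $K$ is algebraically closed, its value group is a non-trivial divisible subgroup of $\RR$, so the valuation is \emph{not} discrete and $\OO_K$ is neither a discrete valuation ring nor even noetherian. The classical scheme-theoretic N\'eron model (as in BLR) is only available over Dedekind domains and is simply not defined here. This is not a cosmetic point: it is precisely the obstruction that the Bosch--L\"utkebohmert series of papers is designed to overcome, by developing stable reduction and (formal) N\'eron-model-type theory intrinsically for formal and rigid-analytic groups over such general valuation rings. In their approach, one starts from a suitable quasi-compact open analytic subgroup of $A^{\an}$ with (formally) semi-abelian reduction, constructed by rigid-analytic methods, rather than from a scheme-theoretic N\'eron model over $\OO_K$. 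You locate the ``technical heart'' at the Raynaud lifting step, but the substantive difficulty is actually upstream of that, in making sense of reduction theory over the non-noetherian base $\OO_K$ in the first place. One cannot simply descend to a discretely valued subfield and base change either, because achieving semi-stable reduction may force further extensions and the formation of N\'eron models does not commute with such base changes in general. Once the formally semi-abelian open subgroup is available, the rest of your outline (algebraizing the torus and abelian parts, pushing out along the group law to extend the tube to all of $G^{\an}$, and identifying the kernel as a discrete lattice of rank $\dim G - \dim B$ via the monodromy pairing) is correct in spirit and matches Bosch--L\"utkebohmert's development.
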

 \begin{proof}
 	See \cite[Theorem.~8.8~and~Remark.~8.9]{BLII}. 
 \end{proof}
 
 Recall that an abelian variety $A$ over $K$ has good reduction (over $\OO_K$) if there is an abelian scheme $\mathcal{A}$ over $\OO_K$ and an isomorphism of schemes $\mathcal{A}_K\cong A$ over $K$.
 The following result says, roughly speaking, that one can test the $K$-analytic hyperbolicity of a variety over $K$ on analytic tori and on abelian varieties with good reduction.
 
 \begin{theorem}\label{thm:test_on_gr}  
 Let $X$ be a finite type separated scheme over $K$. Then the following are equivalent.
 \begin{enumerate}
 \item   $X$ is $K$-analytically Brody hyperbolic.
 \item Every morphism   $\mathbb{G}_{m,K}^{\an} \to X$ is constant and, for every abelian variety $B$ over $K$ with good reduction over $\OO_K$, every morphism      $B\to X$ is constant.
 \end{enumerate}
 \end{theorem}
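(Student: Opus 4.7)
The direction $(1)\Rightarrow(2)$ is immediate, since $\mathbb{G}_{m,K}$ and abelian varieties with good reduction are connected finite type group schemes over $K$. For the converse I would use Proposition~\ref{prop:obvious}: assuming~$(2)$, which in particular implies that $X$ is $K$-analytically pure, it suffices to show that every morphism of $K$-schemes $f\colon A\to X$ from an arbitrary abelian variety $A$ over $K$ is constant. The plan is to reduce the general case to the good-reduction hypothesis via the Bosch--L\"utkebohmert uniformization (Theorem~\ref{thm:raynaud}).

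Applying Theorem~\ref{thm:raynaud} to $A$ produces a topological covering $\pi\colon G^{\an}\to A^{\an}$ with $G$ semi-abelian, together with a short exact sequence of $K$-group schemes
\[
1\to T\to G\xrightarrow{q} B\to 1,
\]
where $T\cong \mathbb{G}_{m,K}^r$ is a torus and $B$ is an abelian variety of good reduction over $\OO_K$. Since $\pi$ is surjective, showing $f$ constant is equivalent to showing that the composite $\varphi\colonequals f^{\an}\circ\pi\colon G^{\an}\to X^{\an}$ is constant. My first step is to verify that $\varphi$ is constant on every fiber $q^{-1}(b)$ for $b\in B(K)$: because $H^1(K,T)=0$, such a fiber is a trivial $T^{\an}$-torsor, hence isomorphic to $(\mathbb{G}_{m,K}^{\an})^r$, and an axis-by-axis argument (moving one coordinate at a time and invoking the assumption that every morphism $\mathbb{G}_{m,K}^{\an}\to X^{\an}$ is constant) shows that every analytic morphism $(\mathbb{G}_{m,K}^{\an})^r\to X^{\an}$ sends all classical points to a common image. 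Zariski density of classical points in reduced affinoid rigid analytic varieties then forces such a morphism to be constant.

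Next I would descend $\varphi$ along the smooth surjection $q$. The fiberwise constancy above implies that the two pullbacks $\mathrm{pr}_1^{\ast}\varphi$ and $\mathrm{pr}_2^{\ast}\varphi$ agree on classical points of the smooth (hence reduced) fiber product $G^{\an}\times_{B^{\an}}G^{\an}$, and therefore agree as analytic morphisms; by descent along the smooth surjection $q$ in the analytic category, the morphism $\varphi$ factors as $\varphi=\overline{\varphi}\circ q$ for a unique $\overline{\varphi}\colon B^{\an}\to X^{\an}$. By Lemma~\ref{lem:gaga}, $\overline{\varphi}$ is the analytification of a morphism of $K$-schemes $B\to X$. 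Since $B$ has good reduction over $\OO_K$, hypothesis~$(2)$ forces this morphism, and hence $\overline{\varphi}$, $\varphi$, and $f$, to be constant.

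The main obstacle I anticipate is the descent step: upgrading fiberwise constancy on $K$-points to a genuine analytic factorization of $\varphi$ through $q$. This relies on two standard but non-trivial ingredients, namely Zariski density of classical points in reduced rigid analytic varieties (used to conclude that $\mathrm{pr}_1^{\ast}\varphi=\mathrm{pr}_2^{\ast}\varphi$ globally) and descent of morphisms along smooth surjections of adic spaces. Once this factorization is established the remainder of the argument is formal: Lemma~\ref{lem:gaga} algebraizes $\overline{\varphi}$ and the good-reduction hypothesis in~$(2)$ closes the proof.
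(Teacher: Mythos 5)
Your proof takes essentially the same route as the paper's: Bosch--L\"utkebohmert uniformization to obtain the extension $1\to T\to G\to B\to 1$ with $B$ of good reduction and a topological covering $G^{\an}\to A^{\an}$, constancy on $T$-orbits coming from the $\mathbb G_m$-hypothesis, factorization through $B^{\an}$, and then GAGA (Lemma~\ref{lem:gaga}) plus the good-reduction hypothesis to finish. The only difference is one of exposition: the paper compresses the factorization step into a single sentence (``the composed morphism $G^{\an}\to X^{\an}$ factors over $B^{\an}=G^{\an}/T^{\an}\to X^{\an}$''), whereas you spell it out as fiberwise constancy on all $T$-orbits (via $H^1(K,T)=0$, axis-by-axis reduction to single copies of $\mathbb G_m^{\an}$, and Zariski density of classical points) followed by descent along the smooth surjection $G^{\an}\to B^{\an}$. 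This is a legitimate unpacking of what the paper leaves implicit, and you are right to flag the descent step as the one place where a citation or extra care is needed; but the underlying argument is the same.
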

\begin{proof}
Clearly, $(1)\implies (2)$. To prove the theorem, assume that $(2)$ holds. In particular, $X$ is $K$-analytically pure.  
	 Let $A$ be an abelian variety over $K$ and let $A^{\an}\to X_K^{\an}$ be a morphism. By Lemma \ref{lemma:structure_result}, it sufices to show that this morphism  is constant.
	
	By Theorem \ref{thm:raynaud}, there is a  semi-abelian variety $G$ over $K$, an abelian variety $B$ over $K$ with good reduction over $\OO_K$, a surjective homomorphism $G\to B$ whose kernel $T$ is a torus, and a topological covering $G^{\an}\to A^{\an}$.   
	
	Since every morphism $\mathbb{G}_m^{\an}\to X_K^{\an}$ is constant, the  composed morphism 
	\[
	T^{\an}\subset G^{\an}\to A^{\an} \to X_K^{\an}
	\] is constant. Therefore, the composed morphism $G^{\an}\to A^{\an}\to X_K^{\an}$  factors over a morphism $B^{\an}=G^{\an}/T^{\an}  \to X_K^{\an}$.  
	By GAGA (Lemma \ref{lem:gaga}), the morphism $B^{\an}\to X^{\an}$ is the analytification of a morphism $B\to X$.  By assumption $(2)$, this  morphism $B\to X$ is constant. We conclude that $B^{\an}\to X^{\an}$ and thus $A^{\an}\to X^{\an}$ is constant, as required. 
\end{proof}

We stress that Theorem \ref{thm:test_on_gr} will be used to show that the generic fiber of a proper scheme over $\OO_K$ ``inherits'' $K$-analytic  Brody hyperbolicity from the ``grouplessness'' of its  special fiber (Theorem \ref{thm:main_result}).

	 \subsection{Inherting   hyperbolicity from the special fiber}\label{section:inherit}
Given a proper finitely presented scheme $X$ over $\OO_K$, the ``hyperbolicity'' of its special fiber forces the ``hyperbolicity'' of its generic fiber. We will make this more precise in the next section (see  Theorem \ref{thm:main_result}). In this section, we prove some preliminary results necessary to prove the latter result.

	 We start with a     criterion for a   map of topological  spaces to be constant. We recall first some definitions about spectral spaces; see \cite{Hochster} or \cite[Section~08YF]{stacks-project}.     
	 
	 \begin{definition}
A topological space $X$ is \emph{spectral} if $X$ is quasi-compact, if $X$ has a
basis of quasi-compact open subsets which is stable under finite intersections, and if every
irreducible closed subset of $X$ is the closure of a unique point. 
	 \end{definition} 
	 
	  We recall that examples of spectral spaces are given by quasi-compact quasi-separated schemes and quasi-compact quasi-separated adic spaces over $K$ \cite[Lemma 1.3.12]{huber}.  We will   apply the next  lemma to a situation where both  these types of spaces appear.
	
 	 \begin{lemma} \label{lem:topology}
 	 	Let $f\colon X\ra Y$ be a quasi-compact continuous map between spectral spaces, with $X$ connected and $Y$ noetherian. Let $N$ be the set of non-closed points in the image. If $N$ is empty, then $f$ is constant.  Otherwise, the image of $f$ is included in the closure of $N$.
 	 \end{lemma}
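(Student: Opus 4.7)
The plan is to exploit two facts: (i) the image $f(X)$ is constructible in $Y$, and (ii) a constructible set contains the generic points of the irreducible components of its closure. Combined with the connectedness of $X$, this will pin down $f(X)$ very tightly.

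First I would argue that $f(X)$ is constructible. Since $f$ is quasi-compact and continuous between spectral spaces, $f$ is also continuous for the constructible topologies (quasi-compact opens and their complements generate the constructible topology, and both pull back to constructible sets). As $X$ is spectral it is quasi-compact in the constructible topology, so the image $f(X)$ is closed in the constructible topology of $Y$, i.e.\ pro-constructible. Since $Y$ is noetherian, pro-constructible sets are constructible, so $f(X)$ is constructible in $Y$.

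Next, let $Z := \overline{f(X)}$. As a closed subspace of the noetherian spectral space $Y$, it decomposes into finitely many irreducible components $Z = Z_1 \cup \cdots \cup Z_n$ with unique generic points $\eta_1,\ldots,\eta_n$. Writing $f(X)$ as a finite union of locally closed subsets $L_j$, each $Z_i$ is irreducible and contained in $\bigcup_j \overline{L_j}$, hence $Z_i \subseteq \overline{L_j}$ for some $j$; by maximality of $Z_i$ among irreducible closed subsets of $Z$, in fact $Z_i$ equals an irreducible component of $\overline{L_j}$, whose generic point lies in the locally closed set $L_j$. Thus $\eta_i \in L_j \subseteq f(X)$ for every $i$. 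I partition the components into \emph{small} ones, where $Z_i = \{\eta_i\}$ and $\eta_i$ is closed in $Y$, and \emph{big} ones, where $\eta_i \in N$.

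If $N = \emptyset$, then every component is small, so $Z$ is a finite set of closed points, and the subspace $f(X) \subseteq Z$ is finite and discrete; since $f(X)$ is connected (continuous image of $X$), it reduces to a single point, and $f$ is constant. Otherwise, suppose toward contradiction that a small component $\{p\}$ exists. Its complement in $Z$ is the finite union $\bigcup_{j\neq i} Z_j$, which is closed, so $\{p\}$ is open and closed in $Z$, and hence in $f(X)$. Since $N \neq \emptyset$, $f(X)$ also contains the generic point of some big component, distinct from $p$, exhibiting $\{p\}$ as a proper nonempty clopen subset of the connected set $f(X)$---a contradiction. So all components of $Z$ are big, $Z_i = \overline{\{\eta_i\}} \subseteq \overline{N}$ for each $i$, and therefore $f(X) \subseteq Z \subseteq \overline{N}$, as claimed.

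The main obstacle is the first step, namely the passage to the constructible topology to get that $f(X)$ is constructible; once this is in hand, the remaining argument is a clean case analysis on the irreducible components of $\overline{f(X)}$ using that $X$ is connected.
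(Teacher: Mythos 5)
There is a genuine gap in the first step. The claim that ``since $Y$ is noetherian, pro-constructible sets are constructible'' is false, and the image of a quasi-compact map into a noetherian spectral space need \emph{not} be constructible. Concretely, take $X=\operatorname{Spec}(\mathbb{Q})\to Y=\operatorname{Spec}(\mathbb{Z})$: this is a quasi-compact continuous map of spectral spaces with $X$ connected and $Y$ noetherian, but the image $\{(0)\}$ is not a finite union of locally closed sets (any locally closed set containing the generic point $(0)$ must be open, hence infinite). It is, of course, pro-constructible, being the intersection $\bigcap_p\bigl(Y\smallsetminus\{(p)\}\bigr)$. The underlying point is that in the patch topology a noetherian spectral space is a compact Hausdorff totally disconnected space, so if it is infinite there are patch-closed sets that are not patch-open, i.e.\ pro-constructible sets that are not constructible. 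Since the rest of your argument relies on writing $f(X)$ as a finite union of locally closed sets $L_j$ and extracting their generic points, it does not go through as written.

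The good news is that the structural fact you actually need --- that the generic point of each irreducible component of $\overline{f(X)}$ already lies in $f(X)$ --- \emph{is} true merely under pro-constructibility: for a pro-constructible subset $E$ of a spectral space, the closure $\overline{E}$ is precisely the set of specializations of points of $E$ (this is essentially \cite[Tag~0903]{stacks-project}), and if $\eta$ is a generic point of a component and $e\in E$ specializes to $\eta$, then $\overline{\{\eta\}}\subseteq\overline{\{e\}}\subseteq\overline{E}$ forces $\overline{\{\eta\}}=\overline{\{e\}}$ by maximality, hence $\eta=e\in E$ by sobriety. With this substitution your clopen--component case analysis, which is correct, completes the argument. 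The paper's proof is shorter and more elementary: it fixes a putative closed point $y\in f(X)\smallsetminus\overline{N}$, shows that $U=f^{-1}(Y\smallsetminus\{y\})$ is a quasi-compact open that is stable under specialization (hence closed, again by \cite[Tag~0903]{stacks-project}), and then uses connectedness of $X$ directly to force $U=\varnothing$, i.e.\ $f(X)=\{y\}$, from which both halves of the statement follow. Your route, once the pro-constructible fix is made, buys a cleaner description of $f(X)$ via components of its closure but requires more spectral-space bookkeeping; the paper's route avoids any discussion of the image's constructibility altogether.
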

 
\begin{proof} Let $Z$ be the closure of $N$ and suppose that there is a (closed) point  $y$ in the image of $f$ which does not lie in $Z$. The set $U\colonequals f^{-1}(Y\setminus\{y\})$ is open and quasi-compact.  Let $x'\in U$ be a point and let  $x$ be a  point in the closure of  $x'$, so that $f(x)$  is in the closure of $f(x')$. If $f(x')$ lies in $Z$, then $f(x)$   also lies in $Z$. Otherwise, the point $f(x')$ is closed, so  that $f(x)=f(x')$. In both cases, we deduce that $x'$ lies in $U$. Therefore $U$ is stable under specializations, and is therefore  closed \cite[Tag~0903]{stacks-project}. As $X$ is connected, we conclude that $U$ is empty, as desired. 
\end{proof}

	 \begin{corollary}\label{lem:reduction1}\label{cor:crit_for_constancy}
	 Let $\mfX$ be  a quasi-compact  separated formal scheme over $\OO_K$ which is topologically of finite presentation, and let $\pi:\mfX_\eta\to \mfX_k$ be the specialization map. Let $S$ be a connected  finite type separated scheme  over $K$. A map of rigid analytic varieties $f\colon S^{\an}\ra\mfX_\eta$ is constant if and only if every point in the image of the composite continuous map $\pi\circ f$  is closed.
	 \end{corollary}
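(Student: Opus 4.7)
The plan is to combine Lemma \ref{lem:topology} with Proposition \ref{prop:aff_are_brody} by means of a local-to-global connectedness argument. The forward direction is immediate: if $f$ is constant, its image is a single classical ($K$-rational) point $y_0\in \mfX_\eta$, which by the valuative description of the generic fiber extends to an $\OO_K$-point of $\mfX$ and hence specializes to a $k$-rational (closed) point $\pi(y_0)$ of $\mfX_k$, so the image of $\pi\circ f$ is a single closed point.

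For the reverse direction, the first step is to show that $\pi\circ f\colon S^{\an}\to \mfX_k$ is itself constant. Because $S^{\an}$ need not be quasi-compact, one cannot apply Lemma \ref{lem:topology} directly. Instead, I would cover $S^{\an}$ by affinoid opens $U$: since any Tate algebra over $K$ is noetherian, such a $U$ has only finitely many idempotents, and hence only finitely many connected components, each clopen in $U$ and again an affinoid. On each such connected component $V\subseteq U$, the composite $\pi\circ f|_V\colon V\to \mfX_k$ is a quasi-compact continuous map between spectral spaces, with $\mfX_k$ noetherian (being of finite type over the field $k$). By hypothesis its image consists of closed points, so Lemma \ref{lem:topology} forces $\pi\circ f|_V$ to be constant. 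Consequently $\pi\circ f$ is locally constant on $S^{\an}$.

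Using that the scheme $S$ is connected (and hence so is $S^{\an}$), I would then promote local constancy to global constancy: for each closed point $\bar x$ in the image of $\pi\circ f$, the set $(\pi\circ f)^{-1}(\{\bar x\})$ is closed in $S^{\an}$ by continuity and open by the local constancy just established, hence clopen; connectedness leaves only one value $\bar x_0$. Thus $f$ factors through the open tube $\pi^{-1}(\{\bar x_0\})\subseteq \mfX_\eta$.

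Finally, I would pick an affine open formal subscheme $\mfU\subseteq \mfX$ whose special fiber contains $\bar x_0$; then $\pi^{-1}(\{\bar x_0\})\subseteq \mfU_\eta$, and $\mfU_\eta$ is an affinoid adic space. Replacing $S$ by $S_{\mathrm{red}}$ (which has the same underlying topological space as $S$), Proposition \ref{prop:aff_are_brody} applied to the induced morphism $S^{\an}_{\mathrm{red}}\to \mfU_\eta$ shows that this morphism is constant, and hence $f$ is constant. The main technical obstacle throughout is matching the quasi-compactness hypothesis of Lemma \ref{lem:topology} with the possibly non-qcqs space $S^{\an}$; this is dealt with by restricting to the finitely many connected components of each affinoid open in $S^{\an}$, each of which is again affinoid.
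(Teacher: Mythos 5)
Your proof is correct and takes essentially the same route as the paper's own argument: restrict to quasi-compact connected open pieces of $S^{\an}$ so as to apply Lemma~\ref{lem:topology}, glue by connectedness of $S^{\an}$, and then invoke Proposition~\ref{prop:aff_are_brody} on the affinoid tube $\mfU_\eta$ containing $\pi^{-1}(\bar x_0)$. You are in fact slightly more careful than the paper in two places, namely in spelling out the decomposition of each affinoid open into finitely many affinoid connected components and in passing to $S_{\mathrm{red}}$ to match the reducedness hypothesis of Proposition~\ref{prop:aff_are_brody}.
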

	 \begin{proof} Suppose that every point in the image $\pi\circ f$ is closed.
	We claim that the composite map $\pi\circ f$ is constant. To prove this, we can restrict $S^{\an}$ to some quasi-compact connected open subset $T$. Since the specialization map is quasi-compact \cite[(0.2.2)-(0.2.3)]{Berth} and $f$ is quasi-compact,   the composite $\pi\circ f\colon T\ra\mfX_\sigma$ is a quasi-compact continuous map of spectral spaces with a noetherian target. The claim that $\pi\circ f$ is constant now follows from the previous lemma.
	
	We conclude that there is a   point $x$ in $\mfX_\sigma$ such that $f$ factors over the set $\pi^{-1}(x)$. Note that $\pi^{-1}(x)$ is included in $\mfU_\eta=\pi^{-1}(\mfU_\sigma)$ for some open affine subscheme $\mfU $ of $\mfX$. Since $\mfU_\eta$ is affinoid \cite[(0.2.2.1)]{Berth}, the corollary follows from the hyperbolicity of affinoids (Proposition \ref{prop:aff_are_brody}).
	 \end{proof}

  We recall that whenever one considers a  formal scheme $\mfX$ over $\OO_K$ which is obtained as the formal completion of a proper finitely presented scheme $X$ over $\OO_K$, its generic fiber $\mfX_\eta$ coincides with the $K$-analytic space associated to $X_K$ \cite[Proposition (0.3.5)]{Berth}.

We now investigate the ``generic'' properties of a  proper scheme over $\OO_K$ whose special fiber  does not contain any curves of genus at most $g$.  To make this more precise, we start with     two definitions.
  \begin{definition}\label{defn:no_curves} If $X$ is a proper scheme over  a field $k$, then we say that $X$ \emph{has no   curves of genus at  most $g$} if, for every smooth projective geometrically connected curve $C$ over $k$ of genus $\leq g$, every morphism $C\to X$ is constant. 
  \end{definition}
  \begin{definition}  If $X$ is a rigid analytic variety over $K$, we say that $X$ \emph{has no $K$-analytic curves of genus at most $g$} if, for every smooth projective geometrically connected curve $C$ of genus $\leq g$ and every dense open subscheme $\widetilde{C}\subset C$, every morphism $\widetilde{C}^{\an}\to X$ is constant.
  \end{definition}

  If $X$ is a rigid analytic variety over $K$ with no $K$-analytic curves of genus zero, then  $X$ is $K$-analytically pure (Definition \ref{def:cherry_hyp}).
  
      The following results are inspired by (and generalize) \cite[Lemmas~2.12-2.13]{CherryKoba}. The proof in \emph{loc. cit.} uses  the Berkovich topology. 
  
  \begin{proposition}\label{prop:constant}
Let $X$ be a proper  scheme  over $\OO_K$ such that $X_k$ has no curves of genus at most $g$. Let $C$ be a smooth projective connected curve of genus at most $g$ over $K$ and let $U\subset C^{\an}$ be a connected open analytic subvariety of $C^{\an}$.  If $f\colon U\ra X_K^{\ad}$ is a $K$-analytic map, then the composite map of topological spaces $U\ra X_K^{\ad}\ra X_k$ is constant.
\end{proposition}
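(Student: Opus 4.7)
The plan is to show that the composite $\pi_X\circ f\colon U\to X_k$ is locally constant on $U$; by connectedness of $U$ this will yield global constancy. So fix $u\in U$: it suffices to find a connected affinoid neighborhood $V\subset U$ of $u$ on which $\pi_X\circ f$ is constant. Since $K$ is algebraically closed of characteristic zero, I would pick a semistable (stable if $g\geq 2$) $\OO_K$-model $\mathcal{C}$ of $C$, whose special fiber $\mathcal{C}_k$ is a connected nodal curve whose irreducible components are smooth projective $k$-curves of genus at most $g$. Writing $\pi_C\colon C^{\ad}\to \mathcal{C}_k$ for the specialization map, one obtains the desired $V$ by taking $V=\pi_C^{-1}(W)$ for a sufficiently small affine open $W\subset\mathcal{C}_k$ containing $\pi_C(u)$: then $V$ is a connected affinoid, and its natural formal model $\mathfrak{V}$ (the formal completion, along $W$, of the preimage of $W$ in $\mathcal{C}$) has special fiber $\mathfrak{V}_k=W$.

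The next step applies Raynaud's theorem on admissible formal models: after a suitable admissible blow-up $\mathfrak{V}'\to\mathfrak{V}$, the rigid analytic map $f|_V\colon V\to X_K^{\ad}$ lifts to a morphism of formal $\OO_K$-schemes $\widehat f\colon\mathfrak{V}'\to\widehat X$, where $\widehat X$ denotes the formal completion of $X$ along its special fiber. The irreducible components of $\mathfrak{V}'_k$ are then of two kinds: open subsets of irreducible components of $\mathcal{C}_k$, which have genus at most $g$; and rational curves introduced by the admissible blow-up, of genus $0\leq g$. In either case the smooth projective model of such a component is a smooth projective $k$-curve of genus at most $g$.

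By hypothesis, $X_k$ admits no nonconstant morphism from a smooth projective curve of genus at most $g$. Moreover, any nonconstant morphism from an open subscheme of such a curve to the proper $k$-scheme $X_k$ extends uniquely to the projective closure by the valuative criterion; hence each irreducible component of $\mathfrak{V}'_k$ must map under $\widehat f_k$ to a single closed point of $X_k$. Connectedness of $V$ forces connectedness of $\mathfrak{V}'_k$ (its components meet along the nodes of $\mathcal{C}_k$ and the exceptional loci of the blow-up), so the locally constant map $\widehat f_k\colon\mathfrak{V}'_k\to X_k$ is in fact globally constant at some $x\in X_k$. Commutativity of rigid and formal specialization then yields $\pi_X(f(v))=\widehat f_k(\pi_{\mathfrak{V}'}(v))=x$ for every $v\in V$, so $\pi_X\circ f|_V$ is constant at $x$; connectedness of $U$ finishes the argument.

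The main obstacle is the formal-model bookkeeping in the second step: one must realize the admissible blow-up produced by Raynaud's theorem inside an admissible blow-up of $\mathcal{C}$ itself, so that the ``old'' components of $\mathfrak{V}'_k$ still inherit genus at most $g$ from $\mathcal{C}_k$ while the ``new'' components are rational. Once this is arranged, the genus-at-most-$g$ hypothesis on $X_k$ kills each component separately, and the dual-graph connectedness coming from $V$ being connected stitches the resulting pointwise images into a single point of $X_k$.
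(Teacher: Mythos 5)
Your strategy (local constancy via formal models and Raynaud's theorem) is genuinely different from the paper's, but as written it breaks down at the very first step: the set $V=\pi_C^{-1}(W)$ attached to an affine open $W\subset\mathcal{C}_k$ depends only on the fixed semistable model $\mathcal{C}$ and need not be contained in $U$, so $f$ need not even be defined on it. The proposition is stated for an \emph{arbitrary} connected open analytic subvariety $U\subset C^{\an}$ --- for instance a disc strictly contained in a single residue disc of $\mathcal{C}$ --- and in that case $\pi_C^{-1}(W)$ contains the entire residue disc (tube) over $\pi_C(u)$ for every choice of $W$, hence is never inside $U$; there is no ``sufficiently small'' $W$. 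This generality is precisely what the later applications need (maps from $\mathbb{B}^1$ in Proposition \ref{prop:CK}, maps from dense opens $\widetilde{C}^{\an}$ in Corollary \ref{thm:no_curves}), so it cannot be dropped. To repair the argument you would have to take a connected affinoid neighborhood $V\subset U$ of $u$ and a formal model of $V$ itself; but then the genus bound on the irreducible components of the special fiber no longer comes for free from $\mathcal{C}_k$, and you need the nontrivial fact that reductions of arbitrary affinoid subdomains of a genus-$g$ curve have components whose normalizations have genus at most $g$ (together with rationality of components created by admissible blow-ups over the non-noetherian, non-discretely valued ring $\OO_K$). That is exactly the semistable-reduction input of Bosch--L\"utkebohmert --- equivalently, the statement used in the paper, via \cite[Remark~4.18]{BPR}, that the residue field $\widetilde{\kappa(c)}$ of any point $c\in C^{\ad}$ is either $k$ or the function field of a curve of genus at most $g$ --- and it is the part you flag as ``bookkeeping'' without supplying.

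For contrast, the paper's proof avoids formal models entirely: by Lemma \ref{lem:topology}, a quasi-compact continuous map from a connected spectral space to a noetherian spectral space is constant once its image consists of closed points, and non-closed points of $X_k$ are excluded from the image of $\pi\circ f$ directly, since such a point $\tilde{x}$ has residue field the function field of a curve in $X_k$ which would be dominated by $\widetilde{\kappa(c)}$, the function field of a curve of genus at most $g$, contradicting the hypothesis on $X_k$. Your remaining steps (extending maps from components to their smooth projective models by properness, constancy on each component, connectedness of the special fiber, compatibility of specialization with morphisms of formal models) are sound, so your route can be completed, but only after fixing the choice of $V$ and importing the semistable-reduction input just described.
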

\begin{proof}
By Lemma \ref{lem:topology}, it suffices to show that  every point in the image of $C^{\ad}\to {X_k}$ is closed. 

 Let $\tilde{x}$ be a non-closed point of ${X_k}$ with local field $\kappa(\tilde{x})$.  Suppose that there is a point $c$ in $C^{\ad}$ with $\pi\circ f(c) = \tilde{x}$. Define $x:=f(c)$. Let $\kappa(c)$ [resp. $\kappa(x)$] be the valuation field associated to $c$ [resp. $x$] and let $\widetilde{\kappa(c)}$ [resp. $\widetilde{\kappa(x)}$] be its residue field.  Then we have an inclusion of fields
	 \[
	 \kappa(\tilde{x}) \subseteq \widetilde{\kappa(x)} \subseteq \widetilde{\kappa(c)}.
	 \] Since $c\in C^{\ad}$, we have  that $\widetilde{\kappa(c)} = k$ or $\widetilde{\kappa(c)} $ is the function field of smooth projective connected curve over $k$  of genus at most $g$ \cite[Remark 4.18]{BPR}.  Since $\tilde{x}$ is a non-closed point of ${X_k}$ and ${X_k}$ is a finite type $k$-scheme, we have that $\kappa(\tilde{x}) \neq k$. Thus, the curve $C_{\tilde{x}}\subset {X_k}$ given by the closure of $\tilde{x}$ in ${X_k}$ is dominated by a curve of genus at most $g$.   This contradicts the fact that   ${X_k}$ admits no curves of genus at most $g$. Thus, the point $\tilde{x}$ is not contained in the image of $\pi\circ f:C^{\ad}\to {X_k}$, i.e., every point in the image is closed. 
\end{proof}

\begin{corollary}\label{thm:no_curves} Let $X$ be a proper  scheme   over $\OO_K$ such that ${X}_k$ has no smooth projective connected curves of genus at most $g$. Then ${X}_K^{\ad}$ has no $K$-analytic curves of genus at most $g$.
\end{corollary}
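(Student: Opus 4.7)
The corollary should follow by combining Proposition \ref{prop:constant} with the hyperbolicity of affinoids (Proposition \ref{prop:aff_are_brody}), essentially repeating the pattern used in the proof of Corollary \ref{cor:crit_for_constancy}. The plan is as follows.

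First I would fix a smooth projective geometrically connected curve $C$ over $K$ of genus at most $g$, a dense open subscheme $\widetilde{C}\subset C$, and a morphism $f\colon\widetilde{C}^{\an}\to X_K^{\ad}$; the goal is to show $f$ is constant. Note that $\widetilde{C}^{\an}$ is a connected open analytic subvariety of $C^{\an}$ (it is geometrically connected as a dense open of the geometrically connected curve $C$, and analytification preserves connectedness in this setting). Let $\mathfrak{X}$ be the formal completion of $X$ along its special fiber; since $X$ is proper and finitely presented over $\OO_K$, we have $\mathfrak{X}_\eta = X_K^{\ad}$ by \cite[Proposition~(0.3.5)]{Berth}, and we let $\pi\colon X_K^{\ad}\to X_k$ be the associated specialization map.

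Next I would apply Proposition \ref{prop:constant} to the inclusion $\widetilde{C}^{\an}\subset C^{\an}$ and the map $f$: since $X_k$ has no curves of genus at most $g$, the composition $\pi\circ f\colon\widetilde{C}^{\an}\to X_k$ is constant. Thus the image of $f$ is contained in a single fiber $\pi^{-1}(x)$ of the specialization map. As observed in the proof of Corollary \ref{cor:crit_for_constancy}, one can choose an affine open $\mathfrak{U}\subset\mathfrak{X}$ with $x\in\mathfrak{U}_\sigma$, and then $\pi^{-1}(x)\subset\mathfrak{U}_\eta=\pi^{-1}(\mathfrak{U}_\sigma)$; by \cite[(0.2.2.1)]{Berth}, $\mathfrak{U}_\eta$ is affinoid.

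Finally, I would invoke Proposition \ref{prop:aff_are_brody}: since $\widetilde{C}$ is a smooth (hence reduced) connected finite type scheme over $K$ and $f$ factors through the affinoid $\mathfrak{U}_\eta$, the map $\widetilde{C}^{\an}\to\mathfrak{U}_\eta$ must be constant, whence $f$ is constant as desired. There is no real obstacle here: the substantive content is entirely packaged in Proposition \ref{prop:constant} (which was the step requiring the genus bound and the residue-field analysis), so this corollary is a formal consequence, and the only thing to verify carefully is the reduction of the target to an affinoid via the specialization map.
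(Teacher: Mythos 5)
Your proposal is correct and essentially identical to the paper's proof: the paper simply cites Corollary \ref{cor:crit_for_constancy} together with Proposition \ref{prop:constant}, while you unfold the proof of Corollary \ref{cor:crit_for_constancy} (constancy of $\pi\circ f$ forces the image into an affinoid tube $\mathfrak{U}_\eta$, then Proposition \ref{prop:aff_are_brody} applies). No gap to report.
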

\begin{proof} 
Combine Corollary \ref{cor:crit_for_constancy} and  Proposition \ref{prop:constant}.
\end{proof}

 \section{Grouplessness, purity, and their relation with    non-archimedean hyperbolicity}\label{sec:rel} 
 In this section we investigate some notions of hyperbolicity for algebraic varieties. In particular, we verify some predictions made by Green--Griffiths--Lang's conjecture. To prove our results, we will  combine the non-archimedean results  with  the uniformization  theorem of Bosch--L\"utkebohmert  \cite{BLII}.  To simplify some of our proofs and statements, we introduce some terminology.

 \begin{definition}\label{def:groupless} Let $k$ be an algebraically closed field. Let $X$ be a finite type   scheme over $k$.
 	\begin{enumerate}
\item  We say $X$ is \emph{groupless over $k$} if, for any finite type connected group scheme $G$ over $k$, every morphism $G\to X$ is constant.
\item We say $X$ is \emph{pure over $k$} if, for every normal variety $T$ over $k$ and every dense open $U\subset T$ with $\textrm{codim}(T\setminus U)\geq 2$, we have that every morphism $U\to X$ extends (uniquely) to a morphism $T\to X_{k}$.
	\end{enumerate}
 \end{definition}

Note that Kov\'acs \cite{KovacsSubs}, Kobayashi \cite[Remark~3.2.24]{Kobayashi} and Hu--Meng--Zhang \cite{ShangZhang} refer to groupless varieties (over $\mathbb{C}$) as being ``algebraically hyperbolic'' or ``algebraically Lang hyperbolic'' or ``algebraically Brody hyperbolic''. We  avoid this  terminology, as ``algebraic hyperbolicity'' more commonly refers to   a notion introduced   by Demailly \cite{BKV, Demailly, JKam}.

Note that Proposition \ref{prop:obvious} says that the non-archimedean analogue  (Conjecture \ref{conj:lang}) of the Green--Griffiths--Lang conjecture boils down to showing the non-existence of ``analytic'' tori mapping to a groupless variety. Thus, we see that Conjecture \ref{conj:lang} is equivalent to the following conjecture. 
	
\begin{conjecture}[Non-archimedean Green--Griffiths--Lang Conjecture]  
Let $X$ be a projective variety over $K$. If $X$ is groupless over $K$, then $X$ is $K$-analytically pure over $K$.
\end{conjecture}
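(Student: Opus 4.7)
My plan is to attempt this open conjecture by reducing the rigid-analytic purity problem to a question about an integral model and its special fiber. Given projective groupless $X$ over $K$, by Proposition \ref{prop:obvious} it suffices to rule out non-constant analytic maps $\Gm^{\an}\to X^{\an}$. I would first pass to a finite extension of $K$ and extend $X$ to a proper flat model $\mathcal{X}$ over $\OO_K$ (via the scheme-theoretic closure in some $\PP^N_{\OO_K}$), then apply de Jong's alterations to arrange that the special fiber $\mathcal{X}_k$ is as mild as possible. Note that if $\mathcal{X}_k$ merely satisfied the abelian-variety-constancy hypothesis of Theorem \ref{thm:gen_fiber_is_hyp} (which would in particular follow from $\mathcal{X}_k$ being groupless), then Theorem \ref{thm:gen_fiber_is_hyp} would close the argument immediately.

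The next step is to analyze a putative non-constant analytic map $f\colon \Gm^{\an}\to \mathcal{X}_K^{\an}$ via the specialization map $\pi\colon \mathcal{X}_K^{\an}\to\mathcal{X}_k$. Applying Corollary \ref{cor:crit_for_constancy} and Lemma \ref{lem:topology} to the composite $\pi\circ f$, there are two cases: either the image is a single closed point of $\mathcal{X}_k$, in which case $f$ factors through an affinoid tube and Proposition \ref{prop:aff_are_brody} forces $f$ to be constant; or else the image lies in the closure of the generic point of some positive-dimensional integral subvariety $Z\subseteq \mathcal{X}_k$. The plan in the second case is to extract, from $f$ together with the Bosch--L\"utkebohmert uniformization (Theorem \ref{thm:raynaud}) applied to the analytic torus $\Gm^{\an}$, a non-trivial algebraic map from a connected group scheme over $k$ (either a $\Gm$, or an abelian variety fabricated from the reduction data) into $Z$, then spread this back up to produce a non-constant algebraic map from a connected group scheme over $K$ to $X$, contradicting grouplessness.

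The principal obstacle, and the reason this conjecture remains genuinely open, is executing this last step. Grouplessness of $X_K$ is not known to force the degenerate special fiber $\mathcal{X}_k$ to be groupless: a degeneration can, a priori, acquire both rational curves and abelian subvarieties. Algebraic specialization of grouplessness is available only under strong hypotheses (Theorem \ref{thm:groupless_spec} requires uncountability of the residue field and an algebraic parameter space), and does not obviously apply to the formal-model setup needed here. Overcoming this would presumably require either substantial new birational input controlling degenerations of groupless varieties, or a different route entirely, for instance a perfectoid-style argument in the spirit of Proposition \ref{thm:perf_is_hyp} that avoids reduction to the special fiber altogether by constructing a pro-finite-\'etale perfectoid cover of $X^{\an}$ and invoking the rigidity of maps from reduced rigid varieties to perfectoid spaces.
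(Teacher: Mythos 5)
The statement you were asked to prove is labelled a \emph{conjecture} in the paper, and the paper offers no proof of it: it is an open problem, which (via Proposition \ref{prop:obvious}) the authors show to be equivalent to Conjecture \ref{conj:lang}. Your proposal correctly recognizes this and does not claim a complete argument, so there is no paper proof to compare against; what can be assessed is whether your diagnosis of the obstruction is accurate, and it largely is. The central difficulty you point to is genuine: grouplessness of a projective $X$ over $K$ does not propagate to the special fiber of a proper flat model $\mathcal{X}$ over $\OO_K$, so one cannot simply invoke Theorem \ref{thm:gen_fiber_is_hyp}. Degenerations can acquire rational curves (already for a genus $\geq 2$ curve with bad reduction), and Theorem \ref{thm:groupless_spec} is of no help in this direction: it runs the opposite way (from the geometric generic fiber to \emph{some} special fiber), it requires an uncountable algebraically closed base field, and the base here is $\Spec\OO_K$, a rank-one valuation ring whose residue field may be countable and which is not a variety over a field. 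Your observation that the only cases currently in hand are those where the special fiber is under control (Theorem \ref{thm:gen_fiber_is_hyp}), or where one can route through Cherry's theorem on subvarieties of abelian varieties (Propositions \ref{prop:lv_for_curves} and \ref{prop:lang_for_syms}), is accurate.

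One imprecision worth flagging: you propose to apply the Bosch--L\"utkebohmert uniformization (Theorem \ref{thm:raynaud}) to the analytic torus $\Gm^{\an}$. That theorem concerns abelian varieties over $K$, not tori, and gives no output when fed $\Gm$. In the paper's actual use (Theorem \ref{thm:test_on_gr}), Bosch--L\"utkebohmert is applied to an arbitrary abelian variety over $K$ precisely to reduce the hyperbolicity test to (i) analytic maps from $\Gm^{\an}$ and (ii) algebraic maps from abelian varieties with good reduction; the good-reduction case is then handled by spreading to the model (Lemma \ref{lem:good_reduction_case_1}). This leaves exactly the $\Gm^{\an}$ part untouched, which is the content of the open conjecture, so no variant of the Bosch--L\"utkebohmert step can close that gap. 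Your closing suggestion of a perfectoid pro-finite-\'etale cover in the spirit of Proposition \ref{prop:rigid_to_perf} is sensible as a speculative direction, but as Examples \ref{ex:gm} and the following one in Section 5 show, the existence of such a cover does \emph{not} by itself yield hyperbolicity of the base (pro-finite \'etale descent of hyperbolicity fails), so that route would still require controlling the monodromy of maps $\Gm^{\an}\to X^{\an}$, which is again where matters stall.
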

	
Needless to stress, a smooth  projective $K$-analytically pure variety is not necessarily $K$-analytically Brody hyperbolic. Indeed, a smooth proper connected genus one curve      over $K$ with good reduction over $\OO_K$ is $K$-analytically pure \cite[Theorem~3.6]{Cherry}, but (clearly) not groupless (and therefore not $K$-analytically Brody hyperbolic).

\subsection{Properties of groupless and pure varieties}
Groupless and pure varieties are studied systematically in \cite{JKam, JXie}. Some of the   properties of groupless varieties we need in this paper  are summarized in the following two remarks.

\begin{remark}\label{remark:gr_and_pure}
If $k$ is an algebraically closed field of characteristic zero and $X$ is a proper scheme over $k$, then $X$ is groupless (over $k$) if and only if for every abelian variety $A$ over $k$ every morphism $A\to X$ is constant \cite[Lemma~2.5]{JKam}. Moreover, a proper scheme  $X$ over $k$ is pure if and only if it has no rational curves, i.e., every morphism $\mathbb{P}^1_k\to X$ is constant \cite[Lemma~3.5]{JKam}. In other words,  
  a proper scheme over $k$ is pure if and only if it contains no curves of genus at most zero in the sense of Definition \ref{defn:no_curves}.  In particular, if $X$ is a   proper groupless   variety   over $k$, then $X$ is pure over $k$. The latter statement also follows from \cite[Proposition~6.2]{GLL}.
\end{remark}

\begin{remark}\label{remark:geometricity_groupless}   Let $k$ be an algebraically closed field of characteristic zero, and let $k\subset L$ be an algebraically closed field extension.
Let $X$ be a pure (resp. groupless) variety over $k$. Then $X_L$ is pure (resp. groupless) over $L$. This is proven in \cite{JKam}.  
\end{remark}

 \begin{definition}
 Let $k$ be a field and let $X$ be a finite type scheme over $k$. We say that $X$ is \emph{groupless over $k$} (respectively \emph{pure over $k$}) if there is an algebraically closed field extension $k\subset \Omega$ such that $X_{\Omega}$ is groupless over $\Omega$ (respectively pure over $\Omega$).  In this case, $X$ is groupless over any field extension of $k$. This is a well-defined property of $X$ over $k$ by  Remark \ref{remark:geometricity_groupless}.
 \end{definition}

We will   need the following application of Zarhin's trick for abelian varieties.

\begin{lemma}\label{lem:zarhin}
Let $X$ be a proper scheme over $k$. Then $X$ is groupless over $k$ if and only if, for every principally polarizable abelian variety $B$ over $k$, every morphism $B\to X$ is constant.
\end{lemma}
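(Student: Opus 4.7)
The plan is to reduce the statement to the classical Zarhin trick. The forward implication is immediate from the definition of grouplessness: any principally polarizable abelian variety over $k$ is in particular a connected finite type group scheme over $k$, so if $X$ is groupless then every morphism from such a variety to $X$ is constant.

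For the converse, I would first invoke Remark \ref{remark:gr_and_pure} to reduce grouplessness of $X$ to showing that every morphism $f\colon A \to X$ from an arbitrary abelian variety $A$ over $k$ is constant (this reduction is where characteristic zero enters). The key input is Zarhin's trick: for any abelian variety $A$ over $k$, the abelian variety $B := (A \times A^{\vee})^{4}$ carries a canonical principal polarization, and is therefore principally polarizable over $k$.

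Given such an $f\colon A \to X$, I would then form the composition $g := f \circ \pi\colon B \to X$, where $\pi\colon B \to A$ is the projection onto one of the $A$-factors of $B$. By the hypothesis applied to the principally polarizable $B$, the morphism $g$ is constant. Since $\pi$ is surjective (it admits the evident section sending $a$ to $(a,0,\ldots,0)$), it follows that $f$ is itself constant, as required. Combining with Remark \ref{remark:gr_and_pure} then yields that $X$ is groupless over $k$.

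There is no serious obstacle; the only non-trivial ingredient is Zarhin's trick, which produces, from any abelian variety $A$, a principally polarizable abelian variety surjecting onto $A$. After that, the proof is a one-line reduction.
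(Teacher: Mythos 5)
Your proof is correct and coincides with the paper's own argument: the same reduction via Remark \ref{remark:gr_and_pure} to morphisms from abelian varieties, the same application of Zarhin's trick to $B \cong A^4 \times A^{\vee,4}$, and the same conclusion via surjectivity of the projection $B \to A$. Nothing to add.
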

\begin{proof}
Let $A$ be an abelian variety over $k$, and let $A\to X$ be a morphism. To prove the lemma, as $X$ is proper over $k$, it suffices to show that $A\to X$ is constant (Remark \ref{remark:gr_and_pure}). To do so, let $B:= A^4 \times A^{\vee, 4}$, where $A^\vee$ is the dual abelian variety of $A$. Note that $B$ is principally polarizable by Zarhin's trick \cite{Zarhin}. Therefore, by assumption, the composed morphism $B\to A\to X$ is constant, where $B\to A$ is the projection onto the first coordinate. Since $B\to A$ is surjective and $B\to A\to X$ is constant, we conclude that $A\to X$ is constant, as required.
\end{proof}

\subsection{Hyperbolicity of the special fiber and the analytic generic fiber}
Given a  proper finitely presented scheme over $\OO_K$, we now    relate the non-archimedean hyperbolicity  of its generic fiber to the ``algebraic'' hyperbolicity of its special fiber. We begin with the following immediate consequence of  Corollary \ref{thm:no_curves}.

\begin{proposition}\label{prop:pur_gen_baby} Let $X$ be a proper finitely presented scheme over $\OO_K$ with a pure special fiber over $k$. Then, for any rational curve $C$, every morphism $C^{\an}\to X_K^{\an}$ is constant. In particular,  $X_K$ is  $K$-analytically pure. 
\end{proposition}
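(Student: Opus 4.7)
The plan is to deduce this proposition as an essentially direct application of Corollary \ref{thm:no_curves} with $g = 0$. The main work has already been done in establishing that corollary; here, I only need to translate the hypothesis on $X_k$ and the conclusion into the language of the statement.

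First, I would unpack the purity hypothesis on $X_k$. By Remark \ref{remark:gr_and_pure}, a proper scheme over the algebraically closed field $k$ is pure if and only if it admits no non-constant morphism from $\mathbb{P}^1_k$, i.e., it has no rational curves. Since $k$ is algebraically closed, every smooth projective connected curve of genus zero over $k$ is isomorphic to $\mathbb{P}^1_k$, so purity of $X_k$ is exactly the assertion that $X_k$ has no curves of genus at most $0$ in the sense of Definition \ref{defn:no_curves}. Applying Corollary \ref{thm:no_curves} with $g = 0$ then yields that $X_K^{\an}$ has no $K$-analytic curves of genus at most zero: for every dense open subscheme $\widetilde{C} \subset \mathbb{P}^1_K$, every $K$-analytic morphism $\widetilde{C}^{\an} \to X_K^{\an}$ is constant.

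Next I would handle the (possibly singular) rational curve $C$ in the statement. By definition, $C$ is integral and birational to $\mathbb{P}^1_K$, so the normalization map $\widetilde{C} \to C$ identifies $\widetilde{C}$ with a dense open subscheme of $\mathbb{P}^1_K$. Given any $K$-analytic morphism $f\colon C^{\an} \to X_K^{\an}$, the composition $\widetilde{C}^{\an} \to C^{\an} \xrightarrow{f} X_K^{\an}$ is constant by the previous paragraph; since the analytification of the normalization is surjective, this forces $f$ itself to be constant.

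For the ``in particular'' statement, I would specialize to $\widetilde{C} = \mathbb{G}_{m,K} = \mathbb{P}^1_K \setminus \{0, \infty\}$, which is a dense open subscheme of $\mathbb{P}^1_K$. The preceding discussion then shows that every $K$-analytic morphism $\mathbb{G}_{m,K}^{\an} \to X_K^{\an}$ is constant, so by Definition \ref{def:cherry_hyp} the variety $X_K$ is $K$-analytically pure. There is no real obstacle here; the only substantive step was the already-established Corollary \ref{thm:no_curves}, which in turn rested on the topological criterion in Lemma \ref{lem:topology} and the residue-field analysis in Proposition \ref{prop:constant}.
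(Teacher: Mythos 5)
Your proposal is correct and follows essentially the same route as the paper: translate purity of $X_k$ via Remark \ref{remark:gr_and_pure} into the absence of curves of genus at most zero, then invoke Corollary \ref{thm:no_curves} with $g=0$. The extra details you supply (normalizing a possibly singular rational curve and specializing to $\mathbb{G}_{m,K}\subset\mathbb{P}^1_K$ for the purity claim) are exactly the steps the paper leaves implicit.
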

\begin{proof}
Since the special fiber of $X\to \Spec \OO_K$ is pure and proper, it has no rational curves (Remark \ref{remark:gr_and_pure}). Thus, the special fiber has no curves of genus at most zero. Therefore, by Corollary \ref{thm:no_curves}, the generic fiber $X_K$ has no $K$-analytic curves of genus at most zero. This concludes the proof.
\end{proof}

We now prove that purity of one fiber in a family of varieties is inherited by any fiber that specializes to it.  

\begin{corollary} \label{cor:purity_gens}
Let $X\to S$ be a  proper  morphism of schemes. with $S$ an integral regular noetherian local scheme. Let $s$ be the unique  closed point of $S$, and let   $s'\in S$ be a point.  If $X_{s}$ is   pure over $k(s)$, then  $X_{s'}$ is   pure over $k(s')$.
\end{corollary}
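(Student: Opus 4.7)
My plan is to prove the statement by induction on $d := \dim S$, with the trivial base case $d = 0$ since then $s = s'$. For the inductive step, write $R = \OO_{S,s}$ and let $\mathfrak{p} \subset R$ be the prime corresponding to $s'$ (the case $s' = s$ being again trivial); pick a prime $\mathfrak{q}$ of $R$ with $\mathfrak{p} \subseteq \mathfrak{q} \subsetneq \mathfrak{m}_R$ of height $d-1$, which exists because $R$ is catenary (being regular). Let $s'' \in S$ be the point corresponding to $\mathfrak{q}$, so that $s'$ specializes to $s''$, which in turn specializes to $s$, and $\dim \overline{\{s''\}} = 1$. The local scheme $S^* := \Spec \OO_{S,s''}$ is a regular noetherian local integral scheme of dimension $d-1$ with closed point $s''$, containing $s'$ as a point. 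Applying the inductive hypothesis to $S^*$ and $X \times_S S^* \to S^*$, it suffices to show that $X_{s''}$ is pure over $k(s'')$.

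To handle this one-dimensional case, set $A := R/\mathfrak{q} = \OO_{\overline{\{s''\}},s}$, a one-dimensional noetherian local integral domain with residue field $k(s)$ and fraction field $k(s'')$. By Krull--Akizuki, the integral closure $A^\nu$ of $A$ in $k(s'')$ is a Dedekind domain; localizing at a maximal ideal lying above $\mathfrak{m}_A$ yields a DVR $W$ dominating $A$ with $\Frac W = k(s'')$ and residue field $\kappa_W$ extending $k(s)$. Completing $W$ and then completing an algebraic closure of $\Frac \widehat{W}$ (the latter remains algebraically closed by Krasner's lemma) produces a complete algebraically closed non-archimedean valued field $K$ whose ring of integers $\OO_K$ admits a local embedding $A \hookrightarrow \OO_K$, and whose residue field $k_K$ is an algebraically closed extension of $k(s)$.

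The composition $R \twoheadrightarrow A \hookrightarrow \OO_K$ then defines a morphism $\Spec \OO_K \to S$ sending the closed point to $s$ and the generic point to $s''$. Base-changing yields a proper finitely presented scheme $X_{\OO_K}$ over $\OO_K$ whose special fiber is $X_s \times_{k(s)} k_K$, which remains pure by the geometricity of purity (Remark \ref{remark:geometricity_groupless}). Proposition \ref{prop:pur_gen_baby} then implies that every morphism from the analytification of a rational curve to $(X_{s''})_K^{\an}$ is constant; combined with GAGA for proper schemes, this shows $(X_{s''})_K$ has no algebraic rational curves, hence is pure over the algebraically closed field $K$. Since $K \supseteq k(s'')$, the definition and geometricity of purity give that $X_{s''}$ is pure over $k(s'')$, completing the induction. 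The main obstacle is the valuation-theoretic construction of $\OO_K$ with the correct simultaneous control over its residue field, fraction field, and local structure over $A$; reducing first to the one-dimensional situation is what makes this feasible via Krull--Akizuki.
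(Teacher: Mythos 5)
Your proof is correct and runs on the same analytic engine as the paper's --- reduce to a proper scheme over the valuation ring of a complete algebraically closed non-archimedean field dominating a suitable local ring, apply Proposition \ref{prop:pur_gen_baby}, and use that purity is insensitive to algebraically closed base extension --- but the inductive scaffolding is genuinely different. The paper cuts the regular local ring by a regular parameter $x_1$: the quotient $R/(x_1)$ is again regular of dimension $d-1$ and shares its special fiber with $X\to S$, so the only one-dimensional rings ever encountered are DVRs ($R$ itself when $d=1$, or the localization $R_{(x_1)}$), whose fraction fields one completes and algebraically closes directly; the point $s'$ is then handled by the case distinction $s'\in V(x_1)$ versus not. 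You instead climb a saturated chain of specializations: you pick a coheight-one prime $\mathfrak{q}\supseteq\mathfrak{p}$, prove purity of $X_{s''}$ directly, and induct on $\Spec R_{\mathfrak{q}}$. The price is that $R/\mathfrak{q}$ need not be regular or even normal, which is exactly why you need Krull--Akizuki to manufacture a DVR $W$ dominating it with fraction field $k(s'')$; the payoff is that every $s'$ is treated uniformly through a chain ending at $s$, and in fact regularity of $S$ is never used in an essential way (catenarity and the exact height of $\mathfrak{q}$ are cosmetic --- coheight one and $\operatorname{ht}\mathfrak{q}<d$ suffice), so your argument proves the statement for an arbitrary integral noetherian local base, whereas the paper's cutting argument leans on regularity to stay within DVRs. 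Two small remarks: the appeal to GAGA is unnecessary, since a non-constant algebraic map $\mathbb{P}^1_K\to (X_{s''})_K$ already analytifies to a non-constant analytic map, contradicting Proposition \ref{prop:pur_gen_baby}; and, like the paper's own proof, you silently invoke inputs stated under the characteristic-zero Assumption (Proposition \ref{prop:pur_gen_baby} and Remark \ref{remark:geometricity_groupless}), so the same caveat about the characteristic of $k(s'')$ applies to both arguments equally.
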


\begin{proof} 
	Write $S=\Spec A$. If $\dim S = 0$, then the statement is clear. Thus, we may and do assume that $d:=\dim S \geq 1$. We now proceed by induction on $d$.
	
	If $d=1$, we may and do assume that $s'$ is the generic point of $S$. Let $K$ be a complete algebraic closure of $\mathrm{Frac}(A)$ and note that $K$ is naturally endowed a valuation. Let  $\OO_K$ be the associated valuation ring and write $T:=\Spec \OO_K$. Note  that $X_T \to T$ is a proper morphism and that its special fiber is   pure. By Proposition \ref{prop:pur_gen_baby}, it follows that $X_K$ is $K$-analytically Brody hyperbolic, so that $X_{s'}$ is   pure. This proves the statement when $d=1$.  
	
	Assume that $d>1$.  Let $(x_1,\ldots,x_d)$ be a minimal set of generators for the maximal ideal of $A$. Note that $(x_1,\ldots,x_d)$ is a regular sequence and that $B:=A/(x_1)$ is a regular ring    ring of dimension $d-1$; see \cite[Tag~00NQ]{stacks-project}. Now, as the special fiber of $X_B \to \Spec B$ is isomorphic to the special fiber of $X\to S$, it follows from the induction hypothesis that every fiber of $X_B\to B$ is    pure.   Note that $\Spec B \subset S$ is a closed subscheme. If $s'\in \Spec B$, then we are done. Otherwise, we have that $s'\in \Spec A_{x_1}$. Note that the special fiber of $X_{A_{x_1}}\to \Spec A_{x_1}$ is the generic fiber of $X_B\to \Spec B$, and is thus pure. Therefore,  since $\dim A_{x_1} <d $, by applying the induction hypothesis to $X_{A_{x_1}}\to \Spec A_{x_1}$, we conclude that $X_{s'}$ is   pure. 	
\end{proof}

Similarly to Proposition \ref{prop:pur_gen_baby},  the main result of this section (Theorem \ref{thm:main_result})  says that the grouplessness of the special fiber implies  the analytic hyperbolicity of the generic fiber. To prove our main result, we will use  Theorem \ref{thm:test_on_gr} and   the following   ``algebraic'' lemmas. These lemmas will help us reduce to the case of ``good reduction'' in the proof of our main result (Theorem \ref{thm:main_result}).
 
 \begin{lemma}\label{lem:gll} Let $S$ be an integral regular noetherian scheme.
Let $X\to S$ be a proper morphism of schemes. Assume that the geometric fibers of $X\to S$ contain no rational curves. Let $Z$ be an integral smooth finite type scheme over $S$,   and let $U\subset Z$ be a dense open.
Then, every $S$-morphism $U\to X$  extends to a morphism $Z\to X$.
 \end{lemma}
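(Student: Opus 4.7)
The plan is a two-step argument: first reduce to the case where $Z\setminus U$ has codimension at least two in $Z$, then extend across this high-codimension locus using the no-rational-curves hypothesis on the geometric fibers of $X\to S$.

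For the first step, since $S$ is regular noetherian and $Z\to S$ is smooth, the scheme $Z$ is itself regular; and $X$ is proper over $S$. Weil's theorem on rational maps from a regular scheme to a proper scheme then implies that the indeterminacy locus of the rational map $Z\dashrightarrow X$ induced by $f\colon U\to X$ has codimension at least two in $Z$. Equivalently, there is an open subset $V\subset Z$ with $U\subset V$, $\mathrm{codim}_Z(Z\setminus V)\geq 2$, and such that $f$ extends to a morphism on $V$. Replacing $U$ by $V$, we may assume $\mathrm{codim}_Z(Z\setminus U)\geq 2$.

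For the second step, form the closure $\overline{\Gamma}\subset Z\times_S X$ of the graph of $f$. The first projection $\pi\colon \overline{\Gamma}\to Z$ is proper (since $X\to S$ is proper) and birational (being an isomorphism over $U$). If $\pi$ is an isomorphism, then the composition of its inverse with the second projection $\overline{\Gamma}\to X$ yields the desired extension $Z\to X$. Otherwise, there exists a point $z\in Z\setminus U$ whose fiber $\pi^{-1}(z)$ is positive-dimensional; since the second projection realizes $\pi^{-1}(z)$ as a closed subscheme of the fiber $X_{s(z)}$ of $X\to S$ over the image $s(z)$ of $z$, any rational curve in $\pi^{-1}(z)$ would yield a rational curve in $X_{s(z)}$. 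The existence of such a rational curve in any positive-dimensional fiber of a proper birational morphism onto a regular scheme is precisely the content of \cite[Proposition~6.2]{GLL}; applying it contradicts the no-rational-curves hypothesis, and forces $\pi$ to be an isomorphism.

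The main obstacle is the invocation of the ``rational curves in exceptional fibers'' statement over the regular noetherian base $S$, rather than over a field (where it is a classical consequence of bend-and-break). Short-circuiting this via \cite[Proposition~6.2]{GLL} is the cleanest route; an alternative would be to pass, via flat base change and Noether normalization, to the case where $Z$ is étale-locally $\mathbb{A}^n_S$ and invoke the classical form of the theorem fiberwise, but this adds no substantive content.
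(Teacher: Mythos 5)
Your strategy (extend in codimension one by the valuative criterion, then take the graph closure and look for a rational curve in a positive-dimensional exceptional fiber) is the natural one, but the step your whole argument pivots on is not justified as written. Judging from how the paper itself invokes it, both in this proof and in Remark \ref{remark:gr_and_pure}, \cite[Proposition~6.2]{GLL} is the statement that for a proper morphism over a regular integral noetherian base whose geometric fibers contain no rational curves, a rational section (equivalently, a point over the function field) extends to a section; it is \emph{not} the assertion you attribute to it, that every positive-dimensional fiber of a proper birational morphism onto a regular scheme contains a rational curve. That latter assertion is the deeper ingredient lurking behind such extension results: over a field of characteristic zero it is classical, but over an arbitrary regular noetherian base (in particular in mixed characteristic, which is exactly the setting this lemma is used in) it is a substantial theorem for which you give no valid reference. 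So as written, the crucial step of your proof rests on a citation that does not deliver it, and you are in effect re-proving the content of the cited proposition while citing that same proposition for the hard part.

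For comparison, the paper's proof is a one-step reduction: since $Z$ is smooth over the regular scheme $S$, it is regular; the morphism $U\to X$ gives a $K(Z)$-point of the proper $Z$-scheme $X\times_S Z$, whose geometric fibers are (base changes of) geometric fibers of $X\to S$ and hence contain no rational curves; \cite[Proposition~6.2]{GLL} then directly produces a section $Z\to X\times_S Z$ compatible with the given map on $U$, and composing with the projection gives the extension $Z\to X$. If you use the proposition in this form, your steps 1 and 2 (the codimension-two reduction and the graph/Zariski's-main-theorem discussion) become unnecessary; if instead you insist on your route, you must supply a correct reference or proof for the rational-curves-in-exceptional-fibers statement in the stated generality. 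The remaining details of your write-up are fine: the regularity of $Z$, the codimension-two reduction via the valuative criterion (though calling it Weil's theorem is loose), and the passage from a rational curve in $\pi^{-1}(z)$ to one in a geometric fiber of $X\to S$.
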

 \begin{proof} Let $K$ be the function field of $Z$. The morphism $U\to X$ induces a $K$-section of the morphism $X\times_S Z \to Z$. For any geometric point $z$ in $Z$, its   fiber $X_z\otimes k(z)$  contains no rational curves. Therefore, by \cite[Proposition~6.2]{GLL}, the morphism $X\times_S Z\to Z$ has a section which is compatible with the given morphism $U\to X$. 
 \end{proof}
 
 \begin{lemma}\label{lem:good_reduction_case} Let $S$ be an integral regular noetherian scheme with $K=K(S)$. Let $\mathcal{B}\to S$ be an abelian scheme and let ${X}\to S$ be a proper morphism whose geometric fibers are pure.
 	Let $\mathcal{B}_K\to {X}_K$ be a non-constant morphism. Then $\varphi$ extends uniquely to a morphism $\mathcal{B}\to {X}$.  
 \end{lemma}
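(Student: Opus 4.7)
The claim is essentially a direct application of Lemma \ref{lem:gll}, and the role of the hypothesis ``non-constant'' is only to situate the lemma in its intended use (the conclusion in fact holds for any $\varphi$). The plan is therefore first to dispose of uniqueness, and then to verify the hypotheses of Lemma \ref{lem:gll} so that it supplies the extension.

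For uniqueness, I would argue as follows. Since $X \to S$ is proper, it is in particular separated, and the generic fiber $\mathcal{B}_K \subset \mathcal{B}$ is a scheme-theoretically dense open subscheme (as $\mathcal{B}$ is flat over the integral base $S$ with generic point $\Spec K$). Therefore any two $S$-morphisms $\mathcal{B} \to X$ that agree on $\mathcal{B}_K$ must coincide.

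For existence, I would apply Lemma \ref{lem:gll} with $Z = \mathcal{B}$ and $U = \mathcal{B}_K$. The morphism $\varphi\colon \mathcal{B}_K \to X_K$ is tautologically an $S$-morphism $\mathcal{B}_K \to X$ after composing with the open immersion $X_K \hookrightarrow X$. The hypotheses of Lemma \ref{lem:gll} that need to be verified are:
\begin{enumerate}
\item The geometric fibers of $X \to S$ contain no rational curves. By assumption these fibers are pure, and they are proper (as $X \to S$ is proper), so by Remark \ref{remark:gr_and_pure} the absence of rational curves is equivalent to purity.
\item The scheme $\mathcal{B}$ is integral, smooth, and of finite type over $S$. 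Smoothness and finite type come from the definition of an abelian scheme; reducedness of $\mathcal{B}$ follows from smoothness over the regular (in particular reduced) base $S$; irreducibility follows because $S$ is irreducible and the geometric fibers of $\mathcal{B} \to S$ are geometrically connected.
\item $\mathcal{B}_K \subset \mathcal{B}$ is dense open, being the generic fiber of a flat finite type morphism from an integral scheme.
\end{enumerate}
Lemma \ref{lem:gll} then yields the extension $\mathcal{B} \to X$, concluding the proof.

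The main obstacle is really no more than bookkeeping: recognizing that purity of the geometric fibers is the precise hypothesis on $X \to S$ needed to apply Lemma \ref{lem:gll}, and checking that an abelian scheme over an integral regular noetherian base satisfies the smoothness/integrality hypothesis on the source. No new idea beyond \cite[Proposition~6.2]{GLL} (already used in Lemma \ref{lem:gll}) is required.
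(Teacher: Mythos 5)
Your overall strategy (reduce to Lemma \ref{lem:gll}, with purity of the proper geometric fibers giving the absence of rational curves via Remark \ref{remark:gr_and_pure}, and uniqueness from separatedness plus density) is the right one, and you are also correct that the hypothesis that $\varphi$ be non-constant plays no role in the extension statement. However, your step (3) contains a genuine error: the generic fiber $\mathcal{B}_K=\mathcal{B}\times_S\Spec K$ is \emph{not} an open subscheme of $\mathcal{B}$ in general. It is the preimage of the generic point $\eta$ of $S$, and $\{\eta\}$ is open in $S$ only in very special situations (e.g.\ $S$ the spectrum of a valuation ring or of a one-dimensional local domain); in this paper the lemma is later applied with $S=\Spec \OO_{T,t}$ of arbitrary dimension, where $\eta$ is certainly not open. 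The generic fiber is dense (it contains the generic point of the integral scheme $\mathcal{B}$, because $\mathcal{B}\to S$ is flat), but it is only a pro-open, not an open, subscheme, so Lemma \ref{lem:gll} does not apply verbatim with $U=\mathcal{B}_K$; the same slip occurs in your uniqueness paragraph, where ``dense open'' should be replaced by ``contains the generic point of the integral scheme $\mathcal{B}$'', which is all that separatedness of $X\to S$ needs.

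The gap is repairable, and the paper supplies the missing step differently: since $X\to S$ is proper, the valuative criterion of properness (applied at the codimension-one points of the regular scheme $\mathcal{B}$) first extends $\varphi$ to a genuine dense open $U\subset\mathcal{B}$ whose complement has codimension at least two, and only then is Lemma \ref{lem:gll} invoked to pass from $U$ to $\mathcal{B}$. Alternatively, your route can be salvaged by a standard spreading-out argument: $\varphi$ extends to a morphism $\mathcal{B}_V\to X_V$ over some dense open $V\subset S$, and $\mathcal{B}_V$ is an honest dense open of $\mathcal{B}$ to which Lemma \ref{lem:gll} applies; or, even more directly, restrict $\varphi$ to the generic point of $\mathcal{B}$ to obtain a $K(\mathcal{B})$-point of $X\times_S\mathcal{B}$ and quote \cite[Proposition~6.2]{GLL}, which is exactly what the proof of Lemma \ref{lem:gll} does. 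As written, though, the application of Lemma \ref{lem:gll} with $U=\mathcal{B}_K$ is not justified.
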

 \begin{proof}
 	Since $X\to S$ is proper, by the valuative criterion of properness, there is a dense open $U\subset \mathcal{B}$ whose complement is of codimension at least two and a morphism 
 	 $U\to X$ which extends the morphism $\mathcal{B}_K\to {X}_K$.  By Lemma \ref{lem:gll},  the morphism $U\to {X}$ extends to a morphism $\mathcal{B}\to X$.
 \end{proof}
 
 \begin{lemma}\label{lem:descend}
 Let $\OO$ be a valuation ring with algebraically closed fraction field $K$ and let $S = \Spec \OO$. Let $X\to S$ be a finitely presented morphism of schemes. Then there is an integral regular finite type  scheme $T$ over $\ZZ$, a dominant morphism $S\to T$, and a finitely presented morphism $\mathcal{X}\to T$ with $\mathcal{X}\times_T S\cong X$ over $S$.
 \end{lemma}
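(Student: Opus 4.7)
The plan is to combine standard spreading-out for finitely presented morphisms with de Jong's alterations theorem in order to enforce regularity on the base. Since $\OO$ is the filtered colimit of its finitely generated $\ZZ$-subalgebras, the standard limit results for finitely presented morphisms (as in \cite[Tag~01ZM]{stacks-project}) produce a finitely generated $\ZZ$-subalgebra $A_0\subset\OO$ and a finitely presented morphism $\mathcal{X}_0\to T_0:=\Spec A_0$ with $\mathcal{X}_0\times_{T_0}S\cong X$. The resulting morphism $S\to T_0$ is automatically dominant, since the ring map $A_0\hookrightarrow\OO$ is injective, and $T_0$ is integral and of finite type over $\ZZ$; however $T_0$ need not be regular, and the remainder of the argument is devoted to replacing $T_0$ by a regular model through which $S$ still factors.

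I would then invoke de Jong's alterations theorem to obtain a projective alteration $\widetilde{T}\to T_0$ with $\widetilde{T}$ integral and regular. To lift $S\to T_0$ to $\widetilde{T}$, I would first lift the generic point: because $K$ is algebraically closed, the finite extension of function fields $\Frac(\widetilde{T})/\Frac(T_0)$ embeds into $K$ over $\Frac(T_0)$, yielding a morphism $\Spec K\to\widetilde{T}$ above $\Spec\Frac(T_0)\to T_0$ whose image is the generic point $\widetilde{\eta}$ of $\widetilde{T}$. The valuative criterion of properness, applied to the proper morphism $\widetilde{T}\to T_0$ and the valuation ring $\OO$, then produces a unique lift $\widetilde{S}\colon S\to\widetilde{T}$ compatible with $\widetilde{\eta}$ and the given $S\to T_0$.

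Finally, I would take $T$ to be any affine open neighborhood in $\widetilde{T}$ of the image $\widetilde{S}(\mathfrak{m})$ of the closed point $\mathfrak{m}$ of $S$. Because the prime ideals of a valuation ring form a chain and continuous maps preserve specializations, the image of $\widetilde{S}$ is totally ordered under specialization and ends at $\widetilde{S}(\mathfrak{m})$; since open subsets are stable under generization, the whole image of $\widetilde{S}$ lies in $T$, and so $\widetilde{S}$ factors through a morphism $S\to T$. By construction $T$ is integral, regular and of finite type over $\ZZ$, the morphism $S\to T$ is dominant (sending the generic point of $S$ to $\widetilde{\eta}$), and $\mathcal{X}:=\mathcal{X}_0\times_{T_0}T\to T$ is a finitely presented morphism with $\mathcal{X}\times_T S\cong\mathcal{X}_0\times_{T_0}S\cong X$ over $S$. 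The main obstacle in this plan is the appeal to de Jong's alterations theorem: without its regularity conclusion, normalization of $T_0$ alone would yield regularity only in codimension one (Serre's R1), which does not suffice if the closed point of $S$ happens to land in the singular locus of the normalization.
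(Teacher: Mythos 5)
Your argument is correct and is essentially the paper's own proof: descend $X\to S$ to a finitely presented family over an integral finite type $\ZZ$-scheme, apply de Jong's alteration theorem to regularize the base, lift the generic point using that $K$ is algebraically closed (so the finite extension of function fields embeds into $K$), and then use the valuative criterion of properness for the valuation ring $\OO$ to factor $S$ through the alteration. Your final refinement of passing to an affine open containing the image of the closed point (justified via specializations in $\Spec\OO$) is correct but not required by the statement; the paper simply takes $T$ to be the whole regular alteration.
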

 \begin{proof} Since $X\to S$ is finitely presented, we may descend $X\to S$ to a morphism of finite type schemes over $\ZZ$. Thus,  let $T_1$ be an   integral finite type $\ZZ$-scheme, let $\mathcal{X}_1\to T_1$ be a finitely presented morphism, let $S\to T_1$ be a dominant morphism, and let $\mathcal{X}_1\times_{T_1} S \cong X$ be an isomorphism of schemes over $S$. Now, we use an alteration to ``resolve'' the singularities of $T_1$; see \cite[Theorem~8.2]{deJongAlt}. Thus, let $T\to T_1$ be proper surjective generically finite   morphism with $T$ an integral regular   finite type scheme over $\ZZ$. Since $K$ is algebraically closed, the point $\Spec K\to S \to T_1$ factors over $T\to T_1$. Then, since $\OO$ is a valuation ring, it follows from the valuative criterion of properness \cite[Tag~0A40]{stacks-project} that the dominant morphism $S\to T_1$ factors over $T\to T_1$. Let $\mathcal{X}:= \mathcal{X}_1\times_{T_1} T$. Then, $\mathcal{X}\to T$ is a finitely presented morphism over an integral regular finite type scheme $T$ over $\ZZ$ with $\mathcal{X}\times_T S\cong X$ over $S$, as required. 
 \end{proof}

 \begin{lemma}\label{lem:good_reduction_case_1} 
 Let $\OO$ be a valuation ring with algebraically closed fraction field $K$ and let $S = \Spec \OO$.  Let $ B\to S$ be an abelian scheme and let $X\to S$ be a proper morphism whose geometric fibers are pure.
 	Let $ {B}_K\to {X}_K$ be a non-constant morphism. Then $\varphi$ extends uniquely to a morphism $ {B}\to {X}$. The morphism $\mathcal{B}_K\to {X}_K$ is constant if and only if  $\mathcal{B}_k\to {X}_k$ is constant. 
 \end{lemma}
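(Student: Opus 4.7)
My plan is to descend the situation to a regular noetherian base in order to invoke Lemma \ref{lem:good_reduction_case}, and then to deduce the equivalence from the relative rigidity of morphisms from abelian schemes.

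First I would apply Lemma \ref{lem:descend} to obtain an integral regular finite-type $\ZZ$-scheme $T$ with a dominant morphism $S \to T$ and a proper finitely presented morphism $\mathcal{X} \to T$ satisfying $\mathcal{X}_S \cong X$. I would then analogously descend the abelian scheme $B \to S$ to an abelian scheme $\mathcal{B} \to T$, shrinking $T$ to a dense open if necessary so that the descended family remains an abelian scheme. Using the standard filtered-colimit description of $\Hom$-sets between finitely presented schemes, the morphism $\varphi$ descends (possibly after a further alteration of $T$ preserving regular integrality) to a morphism $\Phi\colon \mathcal{B}_{K(T)} \to \mathcal{X}_{K(T)}$ over the function field of $T$. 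A further shrinking of $T$ would then arrange that all geometric fibers of $\mathcal{X} \to T$ are pure: the generic geometric fiber is the pure fiber $X_K$, and the locus of $t \in T$ whose geometric fiber contains a rational curve is constructible, so a dense open of $T$ avoids it. Lemma \ref{lem:good_reduction_case} then extends $\Phi$ to a morphism $\mathcal{B} \to \mathcal{X}$ over $T$; pulling back via $S \to T$ yields the desired extension $\varphi\colon B \to X$, with uniqueness coming from the separatedness of $X$ and the density of $B_K$ in $B$.

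For the equivalence, the direction ``$\varphi$ constant implies $\varphi_k$ constant'' is immediate from the uniqueness of extensions to the separated scheme $X$. For the converse, I would argue as follows. Suppose $\varphi_k\colon B_k \to X_k$ is constant. Since $B \to S$ is an abelian scheme (in particular proper and flat with geometrically integral fibers) and $X \to S$ is separated, the relative rigidity lemma for morphisms from such families implies that $\varphi$ factors through a section $U \to X$ over some open $U \subset S$ containing the closed point of $S$. But $S = \Spec \OO$ is a local scheme, so the only such $U$ is $S$ itself. Hence $\varphi$ factors through a global section $S \to X$ and is therefore constant, so in particular $\varphi_K$ is constant.

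The main technical hurdle is the descent of purity in the first step, namely ensuring that after shrinking $T$ all geometric fibers of $\mathcal{X} \to T$ are pure. Since the relative Hom scheme $\underline{\Hom}_T(\mathbb{P}^1_T, \mathcal{X})$ parametrizing rational curves in the fibers has a priori infinitely many connected components, the constructibility of the ``has a rational curve'' locus requires a uniform degree bound on such curves, which can be obtained from Mori's bend-and-break theorem in characteristic zero. All other steps are routine consequences of the tools already established earlier in the paper.
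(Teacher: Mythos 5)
Your overall skeleton (descend to a finite-type base via Lemma \ref{lem:descend}, extend via Lemma \ref{lem:good_reduction_case}, then compare the two fibers) is the same as the paper's, but the key middle step has a genuine gap. You arrange purity of the geometric fibers of $\mathcal{X}\to T$ by shrinking $T$ to a dense open avoiding the locus of fibers containing a rational curve. This fails on two counts. First, the constructibility of that locus is not available: it is a priori only a countable union of closed subsets (one for each degree bound), and Mori's bend-and-break does not provide a uniform degree bound for rational curves in the fibers of an arbitrary proper family --- it bounds degrees only under positivity hypotheses on a deforming curve. (Note that the openness you are implicitly asserting is stronger than what the paper itself proves elsewhere: Theorem \ref{thm:purity_spec} and Corollary \ref{cor:spec_pur} only give \emph{density} of pure fibers, over an uncountable field.) Second, and more decisively, even granting a dense open $T^{\circ}\subset T$ with pure fibers, the morphism $S\to T$ need not factor through $T^{\circ}$: the image $t\in T$ of the closed point of $S=\Spec\OO$ may fail to lie in your chosen dense open, in which case ``pulling back via $S\to T$'' is undefined and you do not obtain the extension over all of $S$ --- in particular not over the special fiber, which is exactly what the second assertion of the lemma concerns. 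Your shrinking argument also only uses purity of the generic fiber, whereas the hypothesis that the \emph{special} geometric fiber is pure must enter. The paper instead localizes at $t$, sets $T'=\Spec\mathcal{O}_{T,t}$, notes that the closed fiber of $\mathcal{X}\to T'$ is pure because $X_k$ is, and invokes Corollary \ref{cor:purity_gens} (purity generizes over a regular local base) to get purity of \emph{all} fibers over $T'$, through which $S\to T$ does factor. An algebraic patch in your spirit would be to show that for each $d$ the locus of fibers containing a rational curve of degree at most $d$ is closed and misses $t$, hence misses every generization of $t$; but that is a different argument from the one you gave and essentially reproves Corollary \ref{cor:purity_gens}.

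Your treatment of the equivalence of constancy is fine and is a genuinely different route from the paper's: you invoke the rigidity lemma for the abelian scheme $B\to S$ over the local base, whereas the paper looks at the reduced scheme-theoretic image $Y\subset X$ of $B\to X$ and compares fiber dimensions of $Y\to S$. Both work, but your argument presupposes the extension $B\to X$, so it does not repair the gap above.
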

 \begin{proof}   We first descend  ``everything'' to an integral regular noetherian base $T$   using Lemma \ref{lem:descend}. Thus, 
 let $T$ be an integral regular finite type affine $\ZZ$-scheme, let $S\to T$ be a dominant morphism, let $\mathcal{X}\to T$ be a proper finitely presented morphism with $\mathcal{X}_S \cong X$ over $S$, let $\mathcal{B} \to T$ be an abelian scheme with $\mathcal{B}_S \cong B $ over $S$, and let $\mathcal{B}_{K(T)} \to \mathcal{X}_{K(T)}$ be a morphism which agrees with $B_K \to X_K$ after base-change along $K\subset K(T)$. 
 
 Let $t\in T$ be the image of the (unique) closed points of $S$ in $T$.
Define $T' = \Spec \mathcal{O}_{T,t}$ to be the spectrum of the local ring at $t$.  Note that $T$ is an integral regular noetherian scheme, and that the morphism $S\to T $ factors over the natural morphism $T'\to T$. Indeed, write $T=\Spec A$ and let $\mathfrak{p} = \varphi^\ast \pi$, where $\pi\subset \OO$ is the unique maximal ideal of $\OO$ and $\varphi:A\to \OO$ is the (injective) morphism associated to $S\to T$. The morphism $\varphi:A\to \OO$ induces a morphism $A_{\mathfrak{p}}\to K$. However, the image of this morphism is contained inside $\OO$. Indeed, the ring $A_{\mathfrak{p}}$ is mapped to $\OO_{\mathfrak{m}}$, but the latter equals $\OO$.

Note that the geometric fibers of $X\to S$ are pure (by assumption). Therefore, the fiber of $\mathcal{X}\to T'$ over the (unique) closed point is pure. Thus, by Corollary \ref{cor:purity_gens},  every geometric fiber of $\mathcal{X}\to T'$ is pure. 
  Therefore,  
 	 by Lemma \ref{lem:good_reduction_case}, the morphism $\mathcal{B}_K\to \mathcal{X}_K$  extends uniquely to  a morphism $\mathcal{B}_{T'}\to \mathcal{X}_{T'}$ over $T'$. The morphism $B=\mathcal{B}_S := \mathcal{B}\times_{T'} S \to \mathcal{X}_S = X$ clearly extends the morphism $B_K\to X_K$. This concludes the proof of the first statement of the lemma.
 	 
 	 To prove the second statement, let $Y\subset X$ be the reduced scheme-theoretic image of the morphism $\mathcal{B}\to X$. Note that $Y$ is a closed subscheme of $X$.  Since $\mathcal{B}\to S$ is an abelian scheme over $S$, the morphism $\mathcal{B}\to S$ is surjective. Therefore, the morphism $Y\to S$ is surjective. The fibers of $Y\to S$ are of the same dimension \cite[Tag~00QK]{stacks-project}.   This concludes the proof.
 \end{proof}

\begin{theorem}\label{thm:main_result}  
	Let   ${X}$ be a proper  finitely presented scheme over  $\Spec \OO_K$. If the special fiber $X_k$ is   groupless, then the generic fiber ${X}_K$ is $K$-analytically Brody hyperbolic. 
\end{theorem}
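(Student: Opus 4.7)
The plan is to apply the testing criterion of Theorem \ref{thm:test_on_gr}: it suffices to verify (i) every $K$-analytic map $\mathbb{G}_{m,K}^{\an}\to X_K^{\an}$ is constant, and (ii) for every abelian variety $B$ over $K$ with good reduction over $\OO_K$, every $K$-scheme morphism $B\to X_K$ is constant. Statement (i) is immediate from what has already been established: a proper groupless variety over $k$ is pure (Remark \ref{remark:gr_and_pure}), so $X_k$ is pure, and Proposition \ref{prop:pur_gen_baby} then shows that $X_K$ is $K$-analytically pure.

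For (ii), the plan is to apply Lemma \ref{lem:good_reduction_case_1} to $\OO = \OO_K$, the abelian scheme $\mathcal{B}$ over $\OO_K$ extending $B$, and the given proper morphism $X \to \Spec \OO_K$. The hypothesis to verify is that the geometric fibers of $X \to \Spec \OO_K$ are pure. Since $K$ is algebraically closed and complete non-archimedean, the residue field $k$ is algebraically closed as well, so there are only two geometric fibers: $X_k$ (pure, because it is proper and groupless) and $X_K$. The key observation for $X_K$ is that any algebraic rational curve $\mathbb{P}^1_K \to X_K$ would analytify to a non-constant morphism $\mathbb{P}^{1,\an}_K \to X_K^{\an}$, contradicting Proposition \ref{prop:pur_gen_baby}; thus $X_K$ has no rational curves and so is pure by Remark \ref{remark:gr_and_pure}. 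With the hypothesis of Lemma \ref{lem:good_reduction_case_1} in hand, a given morphism $B = \mathcal{B}_K \to X_K$ extends to $\mathcal{B} \to X$, and the lemma guarantees that this morphism is constant if and only if its reduction $\mathcal{B}_k \to X_k$ is constant. Since $\mathcal{B}_k$ is an abelian variety over $k$ and $X_k$ is groupless over $k$, the reduction is constant, and hence so is $B \to X_K$.

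The main conceptual step is the interplay between the non-archimedean input, Proposition \ref{prop:pur_gen_baby}, which propagates the ``no rational curves'' condition from the special fiber to the generic fiber and thereby supplies the purity hypothesis needed for Lemma \ref{lem:good_reduction_case_1}, and Lemma \ref{lem:good_reduction_case_1} itself, which transfers an algebraic map from a good-reduction abelian variety down to the special fiber where grouplessness applies. There is no further non-archimedean analysis to do beyond this bookkeeping: Theorem \ref{thm:test_on_gr} has already reduced matters to precisely the two test cases, one analytic and one algebraic, that can be dispatched by the earlier results of the paper.
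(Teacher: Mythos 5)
Your proposal is correct and follows essentially the same route as the paper's proof: reduce via Theorem \ref{thm:test_on_gr} to analytic maps from $\mathbb{G}_{m,K}^{\an}$ and algebraic maps from good-reduction abelian varieties, handle the former by Remark \ref{remark:gr_and_pure} together with Proposition \ref{prop:pur_gen_baby}, and handle the latter by Lemma \ref{lem:good_reduction_case_1} plus grouplessness of the special fiber. The only (welcome) difference is that you explicitly verify the purity of the generic fiber required in the hypothesis of Lemma \ref{lem:good_reduction_case_1}, a point the paper leaves implicit.
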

\begin{proof}  
	Since the special fiber is groupless and proper over $k$, it is pure over $k$ (Remark \ref{remark:gr_and_pure}). Therefore, by Proposition \ref{prop:pur_gen_baby},  the variety ${X}_K$ is $K$-analytically pure. To conclude the proof, let $B$ be  an abelian variety over $K$ with good reduction over $\OO_K$  and   let $B \to X$ be a morphism. To prove the theorem, as we can test hyperbolicity on algebraic groups with good reduction (Theorem \ref{thm:test_on_gr}), it suffices to show that this morphism is constant.

	Let $S= \Spec \OO_K$ and let $\mathcal{B}\to S$ be an abelian scheme over $S$ with $\mathcal{B}_K \cong B$.  Since  the special fiber of ${X}$ is  pure, the first part of Lemma \ref{lem:good_reduction_case_1} implies that the morphism $B\to X$ extends to a morphism $\mathcal{B}\to {X}$. Since the special fiber ${X}_k$ is groupless, the induced morphism   $\mathcal{B}_k\to {X}_k$ is constant. Therefore, by the second part of Lemma \ref{lem:good_reduction_case_1}, the morphism $B\to X_K$ is constant, as required. 
\end{proof}

 \begin{proof}[Proof of Theorem  \ref{thm:gen_fiber_is_hyp}]
 This is Theorem \ref{thm:main_result}.
 \end{proof}

	\subsection{Evidence for the non-archimedean Green--Griffiths--Lang conjecture}
	As we have already mentioned in the introduction, in light of the Green--Griffiths--Lang conjecture, it seems reasonable to suspect that a   projective groupless variety over $K$ is  $K$-analytically Brody hyperbolic; see Conjecture \ref{conj:lang}.  In this direction, we first prove the following result for ``constant'' varieties.

	\begin{corollary}\label{cor:char}	Let $X$ be a proper  scheme over an algebraically closed field $k$. Fix a complete  algebraically closed non-archimedean valued field $K$ endowed with a section $k\hookrightarrow\OO_K$ of the quotient map.  
		\begin{enumerate}
			\item $X$ is pure over $k$ if and only if  $ X_K^{\an}$ is $K$-analytically pure.
			\item $X$ is groupless over $k$ if and only if  $ X_K^{\an}$ is $K$-analytically Brody hyperbolic.
		\end{enumerate}
	\end{corollary}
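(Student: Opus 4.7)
The plan is to derive both statements from the results previously established in Section \ref{section:inherit}, applied to the natural $\OO_K$-model of $X_K$ provided by the section $k \hookrightarrow \OO_K$. Specifically, I set $\mathcal{X} := X \times_{\Spec k} \Spec \OO_K$; since $X$ is finite type over the field $k$, it is automatically finitely presented over $k$, so $\mathcal{X} \to \Spec \OO_K$ is a proper finitely presented morphism. Via the retraction $\OO_K \to k$ coming from the given section, the special fiber $\mathcal{X}_k$ is identified with $X$ itself, while the generic fiber is $X_K$.

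For the forward implications (``algebraic hyperbolicity over $k$'' $\Rightarrow$ ``$K$-analytic hyperbolicity''), I would simply invoke the two inheritance results already proved: Proposition \ref{prop:pur_gen_baby} applied to $\mathcal{X}$ yields (1), and Theorem \ref{thm:main_result} applied to $\mathcal{X}$ yields (2). There is nothing more to do in this direction.

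For the reverse implications I would argue by contrapositive, first reformulating the hypotheses on $X$ via Remark \ref{remark:gr_and_pure}. Observe that $k$ embeds into the characteristic-zero field $K$, hence $k$ itself has characteristic zero, which puts us within the scope of that remark. In (1), if $X$ fails to be pure over $k$, there is a non-constant morphism $\mathbb{P}^1_k \to X$; restricting to $\mathbb{G}_{m,k} \subset \mathbb{P}^1_k$, which is dense in $\mathbb{P}^1_k$ and has separated target, one still obtains a non-constant morphism $\mathbb{G}_{m,k} \to X$. Base-changing to $K$ and analytifying gives a non-constant morphism $\mathbb{G}_{m,K}^{\an} \to X_K^{\an}$, contradicting $K$-analytic purity. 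For (2), if $X$ is not groupless over $k$, the same remark gives a non-constant morphism $A \to X$ from an abelian variety $A$ over $k$; base change and analytification produce a non-constant morphism $A_K^{\an} \to X_K^{\an}$ from a connected finite type group scheme over $K$, contradicting $K$-analytic Brody hyperbolicity of $X_K^{\an}$.

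The only real subtlety is bookkeeping: one must check that the base-changed morphisms remain non-constant (which follows because $k \to K$ is faithfully flat, or concretely because two distinct $k$-points in the image remain distinct $K$-points), and that the passage from algebraic to analytic morphisms preserves non-constancy (which is immediate from the behavior of closed points under analytification). No deeper obstacle arises; the corollary is essentially a direct repackaging of Proposition \ref{prop:pur_gen_baby} and Theorem \ref{thm:main_result} together with the elementary reverse direction.
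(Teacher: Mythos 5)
Your proposal is correct and follows essentially the same route as the paper: the forward implications come from applying Proposition \ref{prop:pur_gen_baby} and Theorem \ref{thm:main_result} to the constant model $X\times_k\Spec\OO_K$ furnished by the section $k\hookrightarrow\OO_K$, and the converses come from base-changing a non-constant rational curve (restricted to $\mathbb{G}_m$) or a non-constant map from an abelian variety along $k\to K$ and analytifying. The paper's converse is stated in one terse sentence (only mentioning abelian varieties); your version just spells out the same bookkeeping, including the purity case via Remark \ref{remark:gr_and_pure}, in full.
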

	
	\begin{proof}
		If $X_k$ is pure (respectively groupless) then $X_K^{\ad}$ is $K$-analytically pure (respectively $K$-analytically Brody hyperbolic)    by Proposition \ref{prop:pur_gen_baby} (respectively Theorem \ref{thm:main_result}). Conversely, a non-constant map $A_k\ra X_k$ from an abelian variety $A$ over $k$ induces a non-constant map  $A_K\ra X_K$. 
	\end{proof}
	
\begin{proof}[Proof of Theorem \ref{thm:lang_for_constant}]
This follows from the second part of Corollary \ref{cor:char}.
\end{proof}

Cherry's theorem (Theorem \ref{thm:bloch}) says that a closed groupless subvariety of an abelian variety over $K$ is $K$-analytically Brody hyperbolic, and thereby confirms that the non-archimedean Green--Griffiths--Lang conjecture holds for closed subvarieties of abelian varieties.
We now prove    the following two  consequences of Cherry's work.  
	\begin{proposition}\label{prop:lv_for_curves}   
	Let $X$ be a quasi-projective integral curve over $K$. Then $X$ is groupless if and only if $X$ is $K$-analytically Brody hyperbolic.
	\end{proposition}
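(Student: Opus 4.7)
Plan. The implication that $K$-analytic Brody hyperbolicity implies grouplessness is immediate: given any morphism $G\to X$ from a connected finite type $K$-group scheme, the induced analytic morphism $G^{\an}\to X^{\an}$ is constant by definition; since $K$ is algebraically closed and morphisms of $K$-schemes are determined by their action on $K$-rational points, the original algebraic morphism $G\to X$ is constant as well. For the converse I assume $X$ is groupless. Since any morphism from a smooth source to $X$ factors uniquely through the normalization $\widetilde{X}\to X$---both algebraically and in the rigid analytic category---and since the only source schemes we have to test are smooth (all finite type connected group schemes over $K$ are smooth in characteristic zero), the properties \emph{groupless} and \emph{$K$-analytically Brody hyperbolic} transfer between $X$ and $\widetilde{X}$. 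I may therefore assume $X$ is smooth, and I let $\overline{X}$ denote its smooth projective completion.

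If $g(\overline{X})\geq 2$, then $\overline{X}$ is itself groupless (every morphism from a connected algebraic group to a smooth projective curve of genus $\geq 2$ is constant, as one sees by factoring through the Jacobian) and is a closed subvariety of its Jacobian, so by Cherry's theorem (Theorem \ref{thm:bloch}) $\overline{X}$ is $K$-analytically Brody hyperbolic, and hence so is its open $X\subset \overline{X}$. If $g(\overline{X})\in\{0,1\}$, then $\overline{X}$ is not groupless, so the grouplessness of $X$ forces the boundary $\overline{X}\setminus X$ to be non-empty (with at least three points in the genus zero case) and $X$ to be affine. The plan is then to construct a finite \'etale Galois cover $f\colon Y\to X$ whose smooth projective completion $\overline{Y}$ has genus at least $2$: since $X$ is an affine curve over an algebraically closed field of characteristic zero with non-trivial boundary, $\pi_1^{\textrm{et}}(X)$ is a free profinite group of rank at least $2$ by Riemann existence, so $X$ admits Galois \'etale covers with arbitrary finite two-generated monodromy, and a direct Riemann--Hurwitz computation shows that for such a cover with non-trivial local monodromy around each boundary point of $\overline{X}$, $g(\overline{Y})$ grows without bound with the degree of $f$. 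Once $g(\overline{Y})\geq 2$, Theorem \ref{thm:bloch} applies to $\overline{Y}$, showing that $\overline{Y}$ and a fortiori $Y\subset \overline{Y}$ are $K$-analytically Brody hyperbolic; Proposition \ref{prop:chevalley_weil} then descends $K$-analytic Brody hyperbolicity along $f$ to conclude that $X$ is $K$-analytically Brody hyperbolic.

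The main obstacle is producing the Galois cover in the genus $\leq 1$ case; this requires the classical combination of Riemann existence (to describe $\pi_1^{\textrm{et}}(X)$ explicitly as a free profinite group) together with Riemann--Hurwitz (to compute and then force $g(\overline{Y})\geq 2$). The remaining steps---algebraization of analytic morphisms from group schemes, the normalization reduction, Cherry's theorem for projective curves of genus $\geq 2$, and descent of hyperbolicity along finite \'etale covers---are an assembly of results already established earlier in the paper.
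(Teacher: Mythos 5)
Your route is the paper's: reduce to the smooth case via normalization, produce a finite \'etale cover $Y\to X$ whose smooth compactification $\overline{Y}$ has genus at least two, apply Cherry's theorem (Theorem \ref{thm:bloch}) to $\overline{Y}$, pass to the open subset $Y\subset\overline{Y}$, and descend hyperbolicity along the finite \'etale map via Proposition \ref{prop:chevalley_weil}; the paper asserts the existence of such a cover without proof, and the place where you try to supply a proof is exactly where your argument breaks.

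The Riemann--Hurwitz criterion you state is false: non-trivial local monodromy around every boundary point does not make $g(\overline{Y})$ grow with the degree, and does not even force $g(\overline{Y})\geq 2$. For $X=\PP^1\setminus\{0,1,\infty\}$ the Galois cover with group $(\ZZ/2)^2$, whose three local monodromies are the three involutions, has total space $\PP^1$ minus six points (degree $4$, genus $0$), and the dihedral covers of $\PP^1\setminus\{0,1,\infty\}$ (local monodromies of orders $2,2,n$) give genus-$0$ examples of arbitrarily large degree with all local monodromies non-trivial; likewise the degree-$2$ cover of $\PP^1$ minus four points ramified over all four boundary points only reaches genus $1$. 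Quantitatively, if every boundary point has local monodromy of order $\geq N$, then Riemann--Hurwitz gives $2g(\overline{Y})\geq 2+d\bigl(2g(\overline{X})-2+n(1-1/N)\bigr)$, which for genus $0$ with $n=3$ or $4$ punctures says nothing when $N=2$. The statement you need is true and standard, but you must force \emph{large} local monodromy: for genus $0$ with $n\geq 3$ punctures take the $(\ZZ/N)^{n-1}$-cover in which every local monodromy has order $N$ (already $N=4$, $n=3$ yields genus $3$); for the once-punctured genus-one case note that the boundary loop is a commutator, so abelian covers are unramified at infinity and leave the genus at $1$ --- you must pass to a non-abelian quotient (e.g.\ the quaternion group, where the commutator maps to the central involution), after which the same computation gives genus $\geq 2$. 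This last point also shows that your appeal to ``arbitrary finite two-generated monodromy'' is not enough by itself: the images of the boundary loops are constrained (in the genus-one case the single boundary loop must map to a commutator), so one has to check that the desired inertia behaviour is actually realizable, as in the explicit covers above. With the existence of the genus $\geq 2$ cover established this way (or simply quoted, as the paper does), the rest of your argument --- the trivial converse direction, the normalization reduction, the direct genus $\geq 2$ case via the Jacobian and Theorem \ref{thm:bloch}, restriction to opens, and descent along finite \'etale maps --- matches the paper and is fine.
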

	\begin{proof} Assume $X$ is groupless.  Since $X$ is a curve, to prove that $X$ is $K$-analytically  Brody hyperbolic, we may and do assume that $X$ is smooth. Then, as $X$ is a groupless smooth quasi-projective curve over $K$, there is a finite \'etale cover $Y\to X$ of $X$ such that the smooth projective model $\overline{Y}$ of $Y$ is of genus at least two. In particular, $\overline{Y}$ is $K$-analytically Brody hyperbolic (Theorem \ref{thm:bloch}). Therefore, the open subset $Y$ of $\overline{Y}$ is $K$-analytically Brody hyperbolic. Since $Y\to X$ is finite \'etale and hyperbolicity descends along finite \'etale maps (Proposition \ref{prop:chevalley_weil}), we conclude that $X$ is $K$-analytically Brody hyperbolic, as required.
	\end{proof} 
	
Following ``standard'' arguments (see for instance \cite{LevinAaron, vanderGeer}), we now   prove the non-archimedean version of the Green--Griffiths--Lang conjecture for symmetric powers of smooth projective curves.
	
	\begin{proposition}[Non-archimedean Green--Griffiths--Lang conjecture for symmetric powers of projective curves]\label{prop:lang_for_syms}
	Let $d\geq 1$ be an integer, and let $X$ be a smooth projective  integral curve over $K$. Then $\mathrm{Sym}^d_{X}$ is groupless if and only if $\mathrm{Sym}^d_{X}$ is $K$-analytically Brody hyperbolic.
	\end{proposition}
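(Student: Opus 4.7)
The reverse implication (Brody hyperbolic implies groupless) is immediate from Proposition \ref{prop:obvious}, since every algebraic morphism from a connected finite type $K$-group scheme is analytic. The content therefore lies in the converse, and my plan is to realize $\mathrm{Sym}^d_X$ as a groupless closed subvariety of the abelian variety $\Jac(X)$ via the Abel--Jacobi map $\phi\colon \mathrm{Sym}^d_X \to \Pic^d X$, and then invoke Theorem \ref{thm:bloch}.

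Assume $\mathrm{Sym}^d_X$ is groupless. By Remark \ref{remark:gr_and_pure}, this proper variety contains no rational curves. Now any non-constant morphism $\mathbb{P}^1_K \to \mathrm{Sym}^d_X$ composed with $\phi$ lands in $\Pic^d X$, which is a torsor under the abelian variety $\Jac(X)$ (trivial because $K$ is algebraically closed); since abelian varieties contain no rational curves, the composite is constant. Hence such a $\mathbb{P}^1$ would have to lie inside a single fiber $|L|$ of $\phi$. So the absence of rational curves in $\mathrm{Sym}^d_X$ is exactly the classical condition that every effective divisor $D$ of degree $d$ on $X$ satisfies $h^0(X, \mathcal{O}_X(D)) = 1$.

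Next I would deduce that $\phi$ is a closed immersion. Each fiber of $\phi$ is a linear system $|L|$, so the previous condition says every geometric fiber of $\phi$ is a single reduced point and the kernel of $d\phi_D$ (the tangent space to $|L|$ at $D$) has dimension $h^0(D) - 1 = 0$. Since $\phi$ is proper, radicial, and unramified, it is a closed immersion. Choosing a $K$-rational point $x_0 \in X(K)$ (which exists because $K$ is algebraically closed) identifies $\Pic^d X$ with $\Jac(X)$ via $L \mapsto L(-d x_0)$, producing a closed immersion $\mathrm{Sym}^d_X \hookrightarrow \Jac(X)$ of $K$-schemes.

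Since $\mathrm{Sym}^d_X$ is groupless and embedded as a closed subvariety of the abelian variety $\Jac(X)$, Theorem \ref{thm:bloch} implies that $\mathrm{Sym}^d_X$ is $K$-analytically Brody hyperbolic, as desired. The main obstacle is justifying carefully that $\phi$ is a closed immersion; this rests on the classical identification of its fibers with linear systems and on the tangent-space computation at arbitrary (possibly non-reduced) divisors in $\mathrm{Sym}^d_X$, using that $\mathrm{Sym}^d_X$ is smooth.
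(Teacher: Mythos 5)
Your proposal is correct and follows essentially the same route as the paper: embed $\mathrm{Sym}^d_X$ into the Jacobian via the Abel--Jacobi map, observe that grouplessness forces the projective-space fibers to be points, and invoke Cherry's theorem (Theorem \ref{thm:bloch}) for the resulting closed subvariety of an abelian variety. The only difference is that you spell out the tangent-space computation showing $\phi$ is unramified (hence a closed immersion), whereas the paper passes directly from ``injective with projective-space fibers'' to ``$W^d$ is isomorphic to $\mathrm{Sym}^d_X$''; your added care here is sound and arguably tightens the argument.
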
 
	\begin{proof}We assume that $\mathrm{Sym}^d_{X}$ is groupless (the other implication is obvious). Let $J $ be the Jacobian of $X$ over $K$. Fix a point $P$ in $X(K)$.
	Consider the morphism $\mathrm{Sym}^d_X\to J$ given by $[x_1,\ldots,x_d] \mapsto x_1+\ldots+x_d - dP$ in $J$.  The fibers of this morphism are projective spaces \cite[Tag~0CCT]{stacks-project}. Thus, since $\mathrm{Sym}^d_X$ is groupless, this morphism is injective.  Let $W^d$ be its image in $J$, and note that the subvariety $W^d\subset J$ is isomorphic to $\mathrm{Sym}^d_X$. In particular, the closed subvariety  $W^d$ of $J$ is a groupless closed subvariety. Therefore, by Cherry's theorem (Theorem \ref{thm:bloch}), we conclude that $W^d$ is $K$-analytically Brody hyperbolic, so that $\mathrm{Sym}^d_X$ is $K$-analytically Brody hyperbolic, as required.
	\end{proof}
 
	\begin{remark}
	Let $A$ be a simple abelian surface over $K$ with   good reduction over $\OO_K$. Let $X:=A\setminus \{0\}$. Since $X$ is simple, we see that $X$ is groupless. Interestingly, if $K=\CC$, then the variety $X$ is not Brody hyperbolic. However,  if $K$ is non-archimedean, the groupless variety $X$ is $K$-analytically Brody hyperbolic. Indeed, since $A$ has good reduction,  any morphism   $\mathbb{G}_m^{\an}\to A\setminus\{0\}\subset A$ is constant \cite[Theorem~3.6]{Cherry}, so that $X$ is $K$-analytically Brody hyperbolic by Proposition \ref{prop:obvious}.  
	\end{remark}

	\subsection{Application to the moduli space of abelian varieties}\label{section:app_to_mod}

  	For  an integer $N\geq 3$ and integer $g\geq 1$, let $\mathcal A_{g}^{[N]}$ be the fine moduli space of principally polarized abelian varieties over $\ZZ[1/N]$ with full level $N$-structure. Note that $\mathcal{A}_{g}^{[N]}$ is a smooth quasi-projective (non-proper)   scheme over $\ZZ[1/N]$; see \cite{MoretBailly}. 
 We let $\mathcal A_{g,K}^{[N],*}$ be the Satake-Baily-Borel compactification of $X$ over $K$.

 We will show that in the equicharacteristic zero case, the smooth quasi-projective scheme $\mathcal{A}_{g,K}^{[N]}$ is $K$-analytically Brody hyperbolic; see Corollary \ref{cor:Ag_is_K_hyp}. To do so, we will combine Nadel's  theorem (over the complex numbers) with the results proven in this paper. In fact, our proof of the $K$-analytic hyperbolicity of  $\mathcal{A}_{g,K}^{[N]}$ given below  is modeled on the proof of Proposition \ref{prop:lv_for_curves}.
        
        \begin{theorem}[Nadel]\label{thm:nadel_C}
        	For every integer $g$, there is an integer $\ell > 3$  such that $\mathcal{A}^{[\ell]*}_{g,\CC} $ is Brody hyperbolic (and groupless over $\CC$).
        \end{theorem}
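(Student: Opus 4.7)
The plan is to prove Brody hyperbolicity and deduce grouplessness from it; the latter implication is immediate, since a non-constant morphism from an abelian variety $A$ to $\mathcal{A}^{[\ell]*}_{g,\CC}$ would, precomposed with the universal covering $\CC^{\dim A}\twoheadrightarrow A^{\an}$ (and any linear embedding $\CC\hookrightarrow\CC^{\dim A}$ meeting a non-constant fiber direction), yield a non-constant entire curve. So the problem reduces to producing an $\ell$ for which $\mathcal{A}^{[\ell]*}_{g,\CC}$ admits no non-constant entire curves $f\colon\CC\to\mathcal{A}^{[\ell]*}_{g,\CC}$.

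I would argue by induction on $g$. In the base case $g=1$, the space $\mathcal{A}^{[\ell]*}_{1,\CC}$ is the classical modular curve $X(\ell)$, a smooth projective curve of genus at least two for $\ell\geq 7$; such curves are Brody hyperbolic by the theorem of de Franchis/Bloch (or simply because the universal cover is the unit disc). For the inductive step, I would fix $g\geq 2$, choose $\ell_0\geq 3$ so that the theorem holds for all $g'<g$ at level $\ell_0$, and take $\ell$ to be any sufficiently large multiple of $\ell_0$ (to be enlarged further as required). Given $f\colon\CC\to\mathcal{A}^{[\ell]*}_{g,\CC}$, the image either (i) lies in the interior, (ii) lies in a single boundary stratum, or (iii) meets both the interior and the boundary. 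In case (i), $f$ lifts through the universal cover $\mathbb{H}_g\to\mathcal{A}^{[\ell]}_{g,\CC}$ of the interior; since $\mathbb{H}_g$ is a bounded domain in $\CC^{g(g+1)/2}$, the lift is constant by Liouville. In case (ii), each Baily--Borel boundary stratum is isomorphic to a $\mathcal{A}^{[\ell']}_{g',\CC}$ with $g'<g$ and $\ell_0\mid\ell'$; by induction and the fact (Proposition \ref{prop:chevalley_weil}'s complex analogue) that Brody hyperbolicity descends along finite \'etale covers, the stratum is Brody hyperbolic and $f$ is constant.

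The main obstacle is case (iii): ruling out entire curves that genuinely cross from the interior into the boundary. The plan is to invoke Nadel's construction of a singular hermitian pseudometric of strictly negative holomorphic sectional curvature on $\mathcal{A}^{[\ell]*}_{g,\CC}$ for $\ell\gg 0$, obtained by patching the invariant Bergman metric on the interior $\Gamma(\ell)\backslash\mathbb{H}_g$ with explicit metrics built from level-$\ell$ Siegel modular forms that vanish to high order along the boundary. The key input is that, for sufficiently deep level structure, one can exhibit enough cusp forms of prescribed weight to dominate the curvature contribution from the boundary; equivalently, the injectivity radius of $\Gamma(\ell)\backslash\mathbb{H}_g$ at the cusps tends to infinity in $\ell$, which ``pushes the boundary to infinity'' in the Bergman distance. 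Once such a metric is in hand, the Ahlfors--Schwarz lemma forces any entire curve $f\colon\CC\to\mathcal{A}^{[\ell]*}_{g,\CC}$ to be constant whenever its image meets the interior.

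The technical heart of the argument is therefore the construction and negative-curvature estimate for this pseudometric near the boundary, which is where torsion-freeness and the deep level of $\Gamma(\ell)\subset\mathrm{Sp}_{2g}(\ZZ)$ enter decisively. I expect the rest of the proof to be essentially formal given the inductive setup and the bounded-domain lift; the analytic work at the cusps of the Baily--Borel compactification is the genuinely hard step and is the part I would expect to import directly from Nadel's paper rather than redo by hand.
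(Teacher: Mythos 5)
Your proposal is correct and ultimately matches the paper's proof: both appeal to Nadel's theorem for the Brody hyperbolicity of $\mathcal{A}^{[\ell]*}_{g,\CC}$ and then observe that Brody hyperbolicity implies grouplessness. The paper simply cites Nadel for the first part without reconstructing his argument, while you supply a sketch of it (parts of which --- the induction on $g$ and the three-way case split over boundary strata --- do not quite reflect the structure of Nadel's single negative-curvature metric argument, and the identification of Satake boundary strata with $\mathcal{A}^{[\ell']}_{g',\CC}$ for $g'<g$ needs more care since those strata are generally finite quotients of such spaces), though you correctly defer the analytic heart to Nadel's paper.
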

        \begin{proof}
        We refer the reader to Nadel's paper \cite{Nadel} for the first statement. The fact that Brody hyperbolic varieties over $\CC$ are groupless over $\CC$ is well-known.  This proves the theorem.
        \end{proof}

  \begin{theorem}\label{thm:nadel_Cp} Fix a complete  algebraically closed non-archimedean valued field $K$ of characteristic zero endowed with a section $k\hookrightarrow\OO_K$ of the quotient map.
  For every integer $g$, there is an integer $\ell$ such that $\mathcal{A}_{g,K}^{[\ell],\ast}$ is $K$-analytically Brody hyperbolic.  
  \end{theorem}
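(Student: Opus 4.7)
The plan is to combine Nadel's theorem (Theorem \ref{thm:nadel_C}) with the inheritance principle packaged as Corollary \ref{cor:char}. First, I would invoke Theorem \ref{thm:nadel_C} to choose an integer $\ell$ (depending on $g$) such that $\mathcal{A}_{g,\CC}^{[\ell],\ast}$ is Brody hyperbolic over $\CC$, and therefore groupless over $\CC$.

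Next, I would transfer grouplessness to the residue field $k$ of $\OO_K$. Since $K$ is algebraically closed of characteristic zero and admits a section $k\hookrightarrow\OO_K$ of the quotient map, the field $k$ is algebraically closed of characteristic zero. The moduli space $\mathcal{A}_g^{[\ell],\ast}$ is defined over $\ZZ[1/\ell]$, so its base changes to $\CC$ and to $k$ can be compared through any common algebraically closed extension $\Omega$ of characteristic zero. Applying Remark \ref{remark:geometricity_groupless} (invariance of grouplessness under extensions of algebraically closed fields of characteristic zero) first to $\CC\subset\Omega$ shows that $\mathcal{A}_{g,\Omega}^{[\ell],\ast}$ is groupless over $\Omega$, and applying it a second time to $k\subset\Omega$ yields that $\mathcal{A}_{g,k}^{[\ell],\ast}$ is groupless over $k$.

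Finally, I would apply Corollary \ref{cor:char}.(2) to the proper $k$-scheme $X=\mathcal{A}_{g,k}^{[\ell],\ast}$ together with the fixed section $k\hookrightarrow\OO_K$, concluding that $X_K\cong\mathcal{A}_{g,K}^{[\ell],\ast}$ is $K$-analytically Brody hyperbolic. There is no substantial obstacle: the proof is a short assembly of Nadel's complex-analytic hyperbolicity of $\mathcal{A}_{g,\CC}^{[\ell],\ast}$ with Theorem \ref{thm:gen_fiber_is_hyp}, which propagates grouplessness of the special fiber to $K$-analytic Brody hyperbolicity of the generic fiber. The only minor technical point to verify is that the formation of the Satake--Baily--Borel compactification is compatible with extension of the characteristic zero base field, so that $(\mathcal{A}_{g,k}^{[\ell],\ast})_K$ is indeed $\mathcal{A}_{g,K}^{[\ell],\ast}$.
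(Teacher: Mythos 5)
Your proposal is correct and follows essentially the same route as the paper: apply Nadel's theorem to get grouplessness of $\mathcal{A}_{g,\CC}^{[\ell],\ast}$, transfer it to the residue field $k$ by the Lefschetz principle (your two-step comparison through a common algebraically closed overfield $\Omega$ is just a spelled-out version of this, with the descent from $\Omega$ to $k$ being the trivial direction), and conclude via Corollary \ref{cor:char}.(2), i.e.\ Theorem \ref{thm:gen_fiber_is_hyp}.
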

  \begin{proof} We first apply Nadel's theorem (Theorem \ref{thm:nadel_C}). Thus, let $N>3$ be an integer such that $\mathcal{A}_{g,\CC}^{[\ell],\ast}$ is groupless over $\CC$. In particular, by the Lefschetz principle, the proper scheme $\mathcal{A}_{g,k}^{[\ell],\ast }$ is groupless over $k$. Therefore, as $\OO_K$ is endowed with a section $k\hookrightarrow \OO_K$ of the quotient map, it follows from  Corollary \ref{cor:char} that $\mathcal{A}_{g,K}^{[\ell],\ast}$ is $K$-analytically Brody hyperbolic.  
  \end{proof}

\begin{corollary} \label{cor:Ag_is_K_hyp} Fix a complete  algebraically closed non-archimedean valued field $K$ of characteristic zero endowed with a section $k\hookrightarrow\OO_K$ of the quotient map. For every integer $N\geq 3$, and for every integer $g$,  the quasi-projective scheme $\mathcal{A}_{g,K}^{[N]}$ is $K$-analytically Brody hyperbolic.  
\end{corollary}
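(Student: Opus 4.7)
The plan is to reduce the statement to Nadel's non-archimedean hyperbolicity result (Theorem \ref{thm:nadel_Cp}), which only guarantees hyperbolicity at some single (large, unspecified) level, and then transport hyperbolicity to an arbitrary level $N$ using the good behavior of $K$-analytic Brody hyperbolicity under finite \'etale maps in both directions.

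First, given $g \geq 1$ and $N \geq 3$, apply Theorem \ref{thm:nadel_Cp} to obtain an integer $\ell$ such that the Satake--Baily--Borel compactification $\mathcal{A}_{g,K}^{[\ell],\ast}$ is $K$-analytically Brody hyperbolic. Since the open embedding $\mathcal{A}_{g,K}^{[\ell]} \hookrightarrow \mathcal{A}_{g,K}^{[\ell],\ast}$ is a morphism of adic spaces, any morphism from a connected finite type group scheme to the open subvariety factors through a $K$-rational point of the compactification, hence of the open subvariety. Therefore $\mathcal{A}_{g,K}^{[\ell]}$ is itself $K$-analytically Brody hyperbolic.

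Next, set $M := \mathrm{lcm}(\ell, N)$, which is invertible in $K$ since $\mathrm{char}(K) = 0$. The forgetful maps $\mathcal{A}_{g,K}^{[M]} \to \mathcal{A}_{g,K}^{[\ell]}$ and $\mathcal{A}_{g,K}^{[M]} \to \mathcal{A}_{g,K}^{[N]}$ are both finite \'etale covers of smooth quasi-projective $K$-schemes. I first move up: given any connected finite type group scheme $G$ over $K$ and any morphism $G^{\an} \to \mathcal{A}_{g,K}^{[M],\an}$, composing with the projection to $\mathcal{A}_{g,K}^{[\ell],\an}$ produces a constant morphism (by the hyperbolicity established above), so the original morphism takes values in a single fiber of $\mathcal{A}_{g,K}^{[M]} \to \mathcal{A}_{g,K}^{[\ell]}$. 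Since this fiber is a finite discrete set and $G^{\an}$ is connected, the morphism is constant. Hence $\mathcal{A}_{g,K}^{[M]}$ is $K$-analytically Brody hyperbolic.

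Finally, I descend using Proposition \ref{prop:chevalley_weil}: applying it to the finite \'etale cover $\mathcal{A}_{g,K}^{[M]} \to \mathcal{A}_{g,K}^{[N]}$ yields that $\mathcal{A}_{g,K}^{[N]}$ is $K$-analytically Brody hyperbolic, as desired. There is no serious obstacle here: Nadel's theorem does the geometric heavy lifting over $\CC$, the Lefschetz principle together with the section $k \hookrightarrow \mathcal{O}_K$ and Corollary \ref{cor:char} transfer this to the non-archimedean setting (all packaged in Theorem \ref{thm:nadel_Cp}), and the passage to arbitrary level $N$ is purely formal via Proposition \ref{prop:chevalley_weil}; the only mildly non-trivial point is the downward step, which is exactly what Proposition \ref{prop:chevalley_weil} supplies via the algebraization of finite \'etale covers of rigid analytifications of algebraic groups.
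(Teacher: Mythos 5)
Your proof is correct and takes essentially the same route as the paper's. The paper invokes Theorem \ref{thm:nadel_Cp} to assert hyperbolicity at some level of the form $N\ell$, passes to the open locus $\mathcal{A}_{g,K}^{[N\ell]}$, and descends to level $N$ via Proposition \ref{prop:cv_stack}; you instead pass to the open locus $\mathcal{A}_{g,K}^{[\ell]}$ first, go up to $\mathcal{A}_{g,K}^{[\mathrm{lcm}(\ell,N)]}$, and then descend. The two are equivalent, but you are slightly more careful: the paper's phrase ``there is an integer $\ell>1$ such that $\mathcal{A}_{g,K}^{[N\ell],\ast}$ is hyperbolic'' tacitly assumes that the level produced by Theorem \ref{thm:nadel_Cp} can be replaced by any multiple of it (which is true because hyperbolicity is inherited upward along finite morphisms between the compactified moduli spaces), whereas you make this ``going up'' step explicit by observing that a morphism from a connected group to $\mathcal{A}_{g,K}^{[M],\an}$ projecting to a constant in $\mathcal{A}_{g,K}^{[\ell],\an}$ must land in a finite discrete fiber and hence be constant. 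That observation is exactly the needed complement to Proposition \ref{prop:chevalley_weil}, which handles descent but not ascent, so your version closes a small gap that the paper leaves to the reader.
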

\begin{proof}  By Theorem \ref{thm:nadel_Cp}, there is an integer  $\ell>1$   such that $\mathcal{A}_{g,K}^{[N\ell],\ast}$ is $K$-analytically Brody hyperbolic. Since $\mathcal{A}_{g,K}^{[N\ell]}$ is an open subscheme of $\mathcal{A}_{g,K}^{[N\ell],\ast}$, it follows that $\mathcal{A}_{g,K}^{[N\ell]}$ is $K$-analytically Brody hyperbolic. Since $\mathcal{A}_{g,K}^{[N\ell]}\to \mathcal{A}_{g,K}^{[N]}$ is finite \'etale and $K$-analytic hyperbolicity descends along finite \'etale maps (Proposition \ref{prop:cv_stack}), we conclude that $\mathcal{A}_{g,K}^{[N]}$ is $K$-analytically Brody hyperbolic.
\end{proof}

\subsection{Cherry's semi-distance}\label{section:distance}
In his thesis, Cherry  introduces and studies a natural analogue of Kobayashi's pseudometric in the non-archimedean setting. In this section we gather three new observations  on Cherry's semi-distance. The main result is that Cherry's semi-distance can ``only'' detect   rational curves for ``constant'' varieties (Theorem \ref{thm:distance}), and therefore fails to detect the hyperbolicity of the space in general. We refer to   \cite{Vasquez} for related results.

\begin{definition}
Let $X$ be a rigid analytic variety over an algebraically closed complete valued field $K$.
\begin{enumerate}
\item Let $x,y\in X(K)$. A \emph{Kobayashi chain joining $x$ and $y$} is a finite sequence of analytic maps$$
f_j\colon\mathbb{B}^1\ra X\qquad j=1,\ldots,n
$$
and points $z_j,w_j\in\mathbb{B}^1(K)$ such that $f(z_1)=x$, $f_n(w_n)=y$ and $f_j(w_j)=f_{j+1}(z_{j+1})$.
\item The \emph{Cherry-Kobayashi semi-distance} on $X(K)$ is defined by
$$d_{CK}(x,y)=\text{inf}\sum_{j=1}^n|z_j-w_j|\in\RR_{\geq0}\cup\{+\infty\}, \quad x,y \in X(K),$$
where the infimum is taken over all Kobayashi chains  joining $x$ and $y$.
\item We will say that the Cherry-Kobayashi semi-distance $d$ on $X(K$) is a \emph{distance} if, for all distinct $x$ and $y$ in $X(K)$, we have that $d(x,y) \neq 0$. (Even if $d$ is a distance,    $d(x,y)$ can nonetheless be infinite \cite[\S 2]{CherryKoba}.)
\end{enumerate}
\end{definition}

We start by showing that Cherry's semi-distance is a distance on an affinoid rigid analytic variety. 
\begin{proposition}\label{prop:aff_are_koba}
If $X$ is an affinoid rigid analytic variety over $K$, then $d$ is a distance  on $X(K)$.   
\end{proposition}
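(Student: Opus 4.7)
The plan is to exhibit, for any two distinct $K$-points $x \neq y$ of $X$, an analytic morphism $f \colon X \to \mathbb{B}^1$ that separates them, and then deduce $d_{CK}(x,y) > 0$ from the functoriality of the Cherry--Kobayashi semi-distance together with an explicit computation of it on $\mathbb{B}^1$. First I would record the (immediate) functoriality: if $f \colon Y \to Z$ is a morphism of rigid analytic varieties, then composing a Kobayashi chain $(f_j, z_j, w_j)$ joining $x$ and $y$ in $Y$ with $f$ produces a Kobayashi chain $(f \circ f_j, z_j, w_j)$ joining $f(x)$ and $f(y)$ in $Z$ with exactly the same data, whence $d_{CK,Z}(f(x),f(y)) \leq d_{CK,Y}(x,y)$.

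Next I would compute $d_{CK}$ on the closed disc $\mathbb{B}^1$, showing $d_{CK,\mathbb{B}^1}(a,b) = |a-b|$. The upper bound is trivial via the one-step chain given by the identity map. For the lower bound I would invoke the non-archimedean Schwarz lemma: every analytic morphism $g \colon \mathbb{B}^1 \to \mathbb{B}^1$ is distance-decreasing, i.e.\ $|g(z) - g(w)| \leq |z-w|$ for $z,w \in \mathbb{B}^1(K)$. This follows from writing $g(T) = \sum_{n \geq 0} a_n T^n$ with $a_n \in \OO_K$, factoring $z^n - w^n = (z-w)(z^{n-1} + z^{n-2}w + \cdots + w^{n-1})$, and applying the ultrametric inequality. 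Given any Kobayashi chain joining $a$ and $b$ in $\mathbb{B}^1(K)$, the ultrametric triangle inequality then gives
\[
|a - b| \;\leq\; \max_j |f_j(z_j) - f_j(w_j)| \;\leq\; \max_j |z_j - w_j| \;\leq\; \sum_{j} |z_j - w_j|,
\]
so that $|a-b| \leq d_{CK,\mathbb{B}^1}(a,b)$, completing the computation.

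Finally, for an affinoid $X = \Spa(R, R^\circ)$ and distinct $x,y \in X(K)$, the corresponding evaluation homomorphisms $R \to K$ have distinct kernels, so there exists $h \in R$ with $h(x) \neq h(y)$. Choosing $c \in K^\times$ with $|c| \cdot \|h\|_{\sup} \leq 1$ ensures $ch \in R^\circ$ while preserving $(ch)(x) \neq (ch)(y)$; this element defines an analytic morphism $f \colon X \to \mathbb{B}^1$. Combining the two steps above,
\[
d_{CK,X}(x,y) \;\geq\; d_{CK,\mathbb{B}^1}(f(x),f(y)) \;=\; |f(x) - f(y)| \;>\; 0,
\]
and the proposition follows. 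The only step requiring real care is the non-archimedean Schwarz estimate on $\mathbb{B}^1$, but even that reduces to a one-line ultrametric bound; the remainder is formal.
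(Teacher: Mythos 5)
Your proof is correct. It follows the paper's route in spirit — reduce to a disc by a distance-decreasing map — but differs in two genuinely useful ways. The paper fixes a closed embedding $X \hookrightarrow \mathbb{B}^n$ and then simply \emph{cites} Cherry \cite[Example 2.9]{CherryKoba} for the fact that the semi-distance on the polydisc $\mathbb{B}^n$ is a distance, observing that any Kobayashi chain in $X$ is a fortiori a chain in $\mathbb{B}^n$. You instead use a single bounded function $ch \in R^\circ$ separating $x$ and $y$, reduce to $\mathbb{B}^1$ via functoriality, and prove the needed estimate $d_{CK,\mathbb{B}^1}(a,b) = |a-b|$ from scratch via the non-archimedean Schwarz lemma (coefficients $a_n \in \OO_K$, telescoping, ultrametric inequality). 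This makes your argument self-contained where the paper's is not, at the cost of writing out the Schwarz computation; it also avoids invoking a closed immersion (you only need a point-separating map to $\mathbb{B}^1$, not an embedding into $\mathbb{B}^n$). One small thing worth making explicit: the equality $R^\circ = \{\,h \in R : |h|_{\sup} \leq 1\,\}$ used when scaling $h$ is what guarantees that $ch$ indeed defines a morphism $X \to \mathbb{B}^1$ of adic spaces (via \cite[Proposition 2.1(ii)]{huber2}, as the paper uses in Proposition \ref{prop:aff_are_brody}); this is standard over the algebraically closed $K$ but deserves the one-line justification since the paper otherwise works with reduced affinoids where it is automatic.
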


\begin{proof}
Let $X$ be $\Spa(R,R^\circ)$ with $R$ a Tate algebra $R=K\langle T_1,\ldots,T_n\rangle/I$. We can embed $X$ as a Zariski closed subvariety of $\mathbb{B}^n$. For any pair of points $x,y\in X(K)$ the set of Kobayashi chains joining them in $X$ is smaller than the one in $\mathbb{B}^n$. It then suffices to show that the semi-distance on $\mathbb{B}^n$ is  a distance, which is done in \cite[Example 2.9]{CherryKoba}.
\end{proof}
 
We now prove a    generalization of \cite[Lemma 2.13]{CherryKoba}.
\begin{proposition}\label{prop:CK}
 Let $X$ be a proper finitely presented scheme   over $\OO_K$. If the special fiber $X_k$ is pure, then the semi-distance on $X_K^{\an}(K)$ is a distance.
\end{proposition}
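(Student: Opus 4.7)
The strategy is to use the specialization map $\pi\colon X_K^{\an}\to X_k$ from the formal completion of $X$ along its special fiber (whose generic fiber coincides with $X_K^{\an}$ by \cite[Proposition (0.3.5)]{Berth}) in order to reduce the problem to the affinoid case already handled by Proposition \ref{prop:aff_are_koba}. The key rigidity supplied by the purity of $X_k$ is that every closed analytic disk in $X_K^{\an}$ collapses to a single (closed) point after specialization; this traps an arbitrary Kobayashi chain inside a formal fiber and, in turn, inside a single affinoid neighborhood.

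Concretely, I would argue as follows. Since $X_k$ is pure and proper, Remark \ref{remark:gr_and_pure} gives that $X_k$ has no rational curves, hence no curves of genus at most zero. Applying Proposition \ref{prop:constant} with $g=0$ and $U=\mathbb{B}^1\subset\mathbb{P}^{1,\an}$, every analytic map $f\colon\mathbb{B}^1\to X_K^{\an}$ satisfies that $\pi\circ f$ is constant on topological spaces. Properness of $X/\OO_K$ together with the valuative criterion furthermore show that any $x\in X(K)$ specializes to a $k$-rational, hence closed, point $\pi(x)\in X_k$. Thus, given a Kobayashi chain $(f_j,z_j,w_j)_{j=1}^n$ joining $x$ and $y$, the constancy of each $\pi\circ f_j$ combined with the matching conditions $f_j(w_j)=f_{j+1}(z_{j+1})$ forces $\pi(x)=\pi(f_j(\mathbb{B}^1))=\pi(y)$ for every $j$. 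In particular, if $\pi(x)\ne\pi(y)$ then no Kobayashi chain exists, so $d_{CK}(x,y)=+\infty>0$; and in the remaining case we may set $y^*\colonequals\pi(x)=\pi(y)$.

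It remains to rule out $d_{CK}(x,y)=0$ when $y^*=\pi(x)=\pi(y)$. As in the proof of Corollary \ref{cor:crit_for_constancy}, I would choose an open affine subscheme $\mathfrak{U}$ of the formal completion of $X$ containing $y^*$, so that $\pi^{-1}(y^*)\subset\mathfrak{U}_\eta$ with $\mathfrak{U}_\eta$ an affinoid. By the previous paragraph, each $f_j$ in any chain joining $x$ and $y$ maps $\mathbb{B}^1$ into $\pi^{-1}(y^*)\subset\mathfrak{U}_\eta$ and hence factors through the open affinoid $\mathfrak{U}_\eta$. Consequently, Kobayashi chains joining $x$ and $y$ in $X_K^{\an}$ coincide with Kobayashi chains joining them in $\mathfrak{U}_\eta$, so $d_{CK}^{X_K^{\an}}(x,y)=d_{CK}^{\mathfrak{U}_\eta}(x,y)$. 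Since $x\ne y$, Proposition \ref{prop:aff_are_koba} yields $d_{CK}^{\mathfrak{U}_\eta}(x,y)>0$, which concludes the proof.

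The main (and really only) obstacle is the global-to-local step: an arbitrary Kobayashi chain in $X_K^{\an}$ is a priori an uncontrolled piece of data, yet we need every disk in it to land in one fixed affinoid neighborhood. The purity of $X_k$ supplies precisely this confinement via Proposition \ref{prop:constant}, and without it chains could in principle traverse several formal fibers, in which case the reduction to the affinoid Proposition \ref{prop:aff_are_koba} would fail.
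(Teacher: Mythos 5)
Your proof follows essentially the same approach as the paper's: you use that purity of $X_k$ forces every analytic map $\mathbb{B}^1\to X_K^{\an}$ to become constant after composing with the specialization map (via Proposition~\ref{prop:constant} with $g=0$), conclude that any Kobayashi chain joining two points lands in a single formal fiber and hence in an open affinoid, and then invoke Proposition~\ref{prop:aff_are_koba}. Your write-up is simply a more detailed unpacking of the paper's argument, in particular making explicit the two cases $\pi(x)\neq\pi(y)$ (chain set empty, so $d_{CK}=+\infty$) and $\pi(x)=\pi(y)$ (reduce to the affinoid $\mathfrak{U}_\eta$).
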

\begin{proof}
As we showed in Section \ref{section:inherit}, by the purity of $X_k$, any analytic map $\mathbb{B}^1\ra X_K^{\ad}$ induces a constant map when composed with the specialization morphism $X^{\ad}_K\ra X_k$. In particular, any Kobayashi chain is constant on the special fiber, and therefore factors over some open affinoid subvariety of $X_K^{\ad}$. The result then follows from the fact that Cherry's semi-distance on an affinoid is a distance  (Proposition \ref{prop:aff_are_koba}). 
\end{proof}

We now show that Cherry's semi-distance does not ``see'' the hyperbolicity of a space in general.  That is, roughly speaking, the next theorem says that  the  Cherry-Kobayashi semi-distance on a constant proper variety over $k(\!(t)\!)$ is a distance   if and only if the variety is pure. Therefore, this semi-distance is not enough to detect the hyperbolicity of a variety, as it only detects rational curves.

\begin{theorem}\label{thm:distance}	Let $k$ be an algebraically closed field of characteristic zero and let $X$ be a proper scheme over $k$. Fix a complete  non-archimedean valued field $K$ with residue field $k$ endowed with a section $k\hookrightarrow\OO_K$ of the quotient map.  The following are equivalent.
	\begin{enumerate}
		\item  $X$ is pure over $k$.
		\item $ X_K^{\an}$ is $K$-analytically pure.
		\item   For every dense open $C\subset \mathbb{P}^1_k$, every morphism of $K$-analytic spaces $C^{\an}\to X_K^{\an}$ is constant.
		\item The semi-distance on $X_K^{\an}(K)$ is a distance.
	\end{enumerate}
\end{theorem}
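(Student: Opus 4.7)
The plan is to prove the cycle $(1)\Rightarrow(3)\Rightarrow(2)\Rightarrow(1)$ and then establish the additional equivalence $(1)\Leftrightarrow(4)$. In both parts the strategy is to reduce everything to the constant model $\mathfrak{X} := X \times_{\Spec k} \Spec \OO_K$, a proper finitely presented $\OO_K$-scheme with special fiber $X$ and generic fiber $X_K$, so that the non-archimedean results from Section \ref{section:inherit} apply.

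For $(1)\Rightarrow(3)$: by Remark \ref{remark:gr_and_pure}, purity of the proper $k$-scheme $X$ is equivalent to the nonexistence of rational curves in $X$; that is, $X$ contains no smooth projective connected curve of genus $\le 0$. I would then apply Corollary \ref{thm:no_curves} with $g=0$ to $\mathfrak{X}$ to conclude that $X_K^{\an}$ has no $K$-analytic curves of genus $\le 0$, which by definition is exactly statement $(3)$. For $(3)\Rightarrow(2)$: specialize $(3)$ to the dense open $C = \mathbb{G}_{m,k} \subset \mathbb{P}^1_k$, whose analytification over $K$ is $\mathbb{G}_{m,K}^{\an}$; this is the definition of $K$-analytic purity. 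Then $(2)\Rightarrow(1)$ is Corollary \ref{cor:char}(1) applied directly. For $(1)\Rightarrow(4)$: this is Proposition \ref{prop:CK} applied to $\mathfrak{X}$, whose purity hypothesis is precisely $(1)$ and whose conclusion is precisely $(4)$.

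For $(4)\Rightarrow(1)$, I would argue by contrapositive, and this is the only step requiring a genuinely new input. Suppose $X$ is not pure; by Remark \ref{remark:gr_and_pure} there is a non-constant morphism $f\colon \mathbb{P}^1_k \to X$, so one can pick $k$-rational points $p_0, q_0 \in \mathbb{P}^1(k)$ with $f(p_0) \neq f(q_0) \in X(k) \subset X_K^{\an}(K)$. The goal is to exhibit Kobayashi chains of arbitrarily small length joining $f(p_0)$ and $f(q_0)$ in $X_K^{\an}$, contradicting $(4)$. Exploiting the sharply $3$-transitive action of $\PGL_2(K)$ on $\mathbb{P}^1(K)$, for any two distinct $z, w \in \mathbb{B}^1(K)$ there is a $\phi \in \PGL_2(K)$ with $\phi(z) = p_0$ and $\phi(w) = q_0$. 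The single-link chain $f_K^{\an} \circ \phi|_{\mathbb{B}^1} \colon \mathbb{B}^1 \to X_K^{\an}$ joins $f(p_0)$ to $f(q_0)$ and has length $|z - w|$. Since the valuation on $K$ is non-trivial by the standing assumption, $|z - w|$ can be made arbitrarily small (take $z = 0$ and $w = \pi^N$ for $\pi \in K$ with $0 < \|\pi\| < 1$), forcing $d_{CK}(f(p_0), f(q_0)) = 0$ and completing the contradiction.

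The only step requiring a new idea is the final $\PGL_2(K)$-argument, which is the crux of the theorem and responsible for the slogan that Cherry's semi-distance ``sees only rational curves''; the remaining implications are direct invocations of Corollary \ref{thm:no_curves}, Corollary \ref{cor:char}, and Proposition \ref{prop:CK} through the constant model $\mathfrak{X}$. A minor bookkeeping point to verify in that argument is that $\phi|_{\mathbb{B}^1}$ need not land in $\mathbb{B}^1$, but it lands in $\mathbb{P}^{1,\an}_K$, which is exactly the source of the globally defined analytic map $f_K^{\an}$, so the composition is a well-defined morphism $\mathbb{B}^1 \to X_K^{\an}$ as required by the definition of a Kobayashi chain.
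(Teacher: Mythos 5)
Your proof is correct and follows essentially the same route as the paper: the equivalence of (1) and (2) via Corollary \ref{cor:char}, $(1)\Rightarrow(3)$ via Corollary \ref{thm:no_curves} with $g=0$ applied to the constant model over $\OO_K$, $(3)\Rightarrow(2)$ by specializing to $\mathbb{G}_m$, and $(1)\Rightarrow(4)$ via Proposition \ref{prop:CK}. The only place you diverge is $(4)\Rightarrow(1)$: the paper simply observes that a non-constant map $\mathbb{P}^1_k\to X$ induces a non-constant map $\mathbb{A}^{1,\ad}_K\to X_K^{\ad}$ and then cites \cite[Corollary~2.7]{CherryKoba} for the degeneracy of the semi-distance, whereas you reprove that step directly with the $\PGL_2(K)$ (Möbius/rescaling) argument producing single-link Kobayashi chains of length $|z-w|\to 0$ between the two distinct $K$-points $f(p_0)\neq f(q_0)$. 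That argument is sound (the composite $f_K^{\an}\circ\phi|_{\mathbb{B}^1}\colon\mathbb{B}^1\to X_K^{\an}$ is a legitimate chain map, and non-triviality of the valuation gives arbitrarily small $|z-w|$), and it is in substance the content of Cherry's cited result, so your write-up is self-contained where the paper delegates to the literature; otherwise the two proofs coincide.
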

\begin{proof}  
	The equivalence of $(1)$ and $(2)$ follows from Corollary \ref{cor:char}.  The implication $(1)\implies (3)$ follows from Corollary \ref{thm:no_curves} (with $g=0$), and the implication $(3)\implies (2)$ is clear.
	  Proposition \ref{prop:CK} proves that $(1)$ implies $(4)$.   
	   Conversely, suppose there exists a non-constant map $\mathbb{P}^1\ra X$. Note that this morphism induces a  non-constant map $\mathbb{A}^{1\ad}_K\ra X_K^{\ad}$. Therefore,  the semi-distance on $X^{\an}_K(K)$ is not a distance by 
	  \cite[Corollary~2.7]{CherryKoba}.  This concludes the proof. 
\end{proof}

\section{Generizing and specialization of grouplessness}\label{sec:spec}

	Let $k$ be an algebraically closed field of characteristic zero. 
	
	\begin{remark}\label{remark:gt} Let us briefly say that a projective variety $X$ over $k$ is \emph{Kodaira-hyperbolic} if every integral subvariety of $X$ is of general type.  
  It is explained in \cite{vBJK} that Kodaira-hyperbolicity ``generizes'' and ``specializes''.  To be more precise, let $S$ be a smooth integral curve over $k$, and let $X\to S$ be a projective family   of varieties over $k$.  If there is an $s$ in $S(k)$ such that $X_s$ is Kodaira-hyperbolic, then the   generic fiber $X_{{K(S)}}$ is Kodaira-hyperbolic over   the function field $K(S)$ of $S$.
Furthermore, if the   generic fiber of $X\to S$ is Kodaira-hyperbolic and $k$ is uncountable, then there is a point $s$ in $S(k)$ such that $X_s$ is Kodaira-hyperbolic.
\end{remark}

Lang conjectured    that  a projective variety over $k$ is groupless if and only if $X$ is Kodaira-hyperbolic; see \cite{JBook, Lang2}. In particular, it predicts that the notion of  grouplessness  generizes and specializes, as   ``being Kodaira-hyperbolic'' generizes and specializes (Remark \ref{remark:gt}). In this section, we prove  these two predictions;  see Theorem    \ref{thm:gr_gen} and Theorem  \ref{thm:groupless_spec}.

\subsection{Purity and grouplessness generize}
We start by proving the generization property. We stress that  our proofs rely on non-archimedean analytic methods, even though the   statements are ``algebraic''.

\begin{theorem}[Purity generizes]\label{thm:pur_gen}  Let $k$ be an algebraically closed field and let $S$ be an integral noetherian normal scheme of characteristic zero. 
	Let $X\to S$ be a proper morphism. If there is a point $s$ in $S(k)$ such that $X_s$ is pure over $k$, then $X_{{K(S)}}$ is    pure.  
\end{theorem}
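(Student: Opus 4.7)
The plan is to reduce the theorem to Proposition \ref{prop:pur_gen_baby} by constructing a complete valuation ring dominating $\OO_{S,s}$ and pulling back a hypothetical rational curve.

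I would first argue by contradiction. By definition of purity over the (not necessarily algebraically closed) field $K(S)$, $X_{K(S)}$ is pure over $K(S)$ if and only if $X_{\overline{K(S)}}$ is pure over $\overline{K(S)}$; since $X\to S$ is proper, the latter is a proper scheme over an algebraically closed field, and by Remark \ref{remark:gr_and_pure} its purity is equivalent to the absence of non-constant morphisms $\mathbb{P}^{1}_{\overline{K(S)}}\to X_{\overline{K(S)}}$. Suppose for contradiction such a morphism exists. By finite presentation, it descends to a non-constant morphism $f\colon\mathbb{P}^{1}_{L}\to X_{L}$ over some finite extension $L/K(S)$.

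Next I would construct a complete algebraically closed non-archimedean valued field $K$ with valuation ring $\OO_K$ dominating $A:=\OO_{S,s}$ and containing $L$. By the classical existence of discrete valuation rings dominating a noetherian local domain (see e.g.\ Bourbaki, Alg.\ Comm.\ VI), pick a DVR $V_{0}\subset K(S)$ dominating $A$; then the integral closure of $V_{0}$ in $L$ is a semi-local Dedekind domain, and localizing at any maximal ideal yields a DVR $V\subset L$ dominating $V_{0}$, and hence $A$. Completing $V$ and then taking the completion of an algebraic closure of $\Frac(\widehat V)$ produces the desired field $K$; it is algebraically closed by Krasner's lemma, and by construction $L\subset K$ and $\OO_K$ dominates $A$. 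Write $k_K$ for the (algebraically closed) residue field of $\OO_K$; it contains~$k$.

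I would then apply Proposition \ref{prop:pur_gen_baby} to $\mathcal X:=X\times_S\Spec\OO_K\to\Spec\OO_K$. Since $X\to S$ is proper and $S$ is noetherian, $\mathcal X\to\Spec\OO_K$ is proper and finitely presented. Its special fiber is $X_s\otimes_k k_K$, and since $X_s$ is pure over $k$ and $k\subset k_K$ is an algebraically closed field extension, Remark \ref{remark:geometricity_groupless} implies that this special fiber is pure over $k_K$. Proposition \ref{prop:pur_gen_baby} then yields that every morphism from a rational curve over $K$ to $\mathcal X_K^{\an}=X_K^{\an}$ is constant. To reach the contradiction I would base-change $f$ along $L\hookrightarrow K$ and analytify, obtaining a non-constant morphism $\mathbb{P}^{1,\an}_K\to X_K^{\an}$.

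The step I expect to be most delicate is the purely commutative-algebra input producing a DVR of $\Frac(A)$ dominating $A$ (rather than merely some higher-rank valuation ring), since $A$ is only assumed to be a normal noetherian local ring and not necessarily regular. Once this classical fact is in hand, the rest of the argument is a formal combination of Proposition \ref{prop:pur_gen_baby}, the geometric invariance of purity (Remark \ref{remark:geometricity_groupless}), and routine base change and analytification.
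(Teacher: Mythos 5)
Your proof is correct, and its overall strategy is the same as the paper's: produce a complete algebraically closed non-archimedean valued field $K$ whose valuation ring dominates $\OO_{S,s}$, so that $X\times_S\Spec\OO_K$ is a proper finitely presented $\OO_K$-scheme with pure special fiber, and then invoke Proposition \ref{prop:pur_gen_baby} (together with Remark \ref{remark:gr_and_pure}, which lets one phrase purity via rational curves). Where you differ is in how the valuation ring is produced: the paper localizes at $s$ and reduces to the one-dimensional case by the ``cutting'' induction of Corollary \ref{cor:purity_gens} (slicing by a regular sequence, which is really adapted to regular local rings), whereas you dominate $\OO_{S,s}$ in one step by a discrete valuation ring of $K(S)$ via the classical result you flag as delicate — this is indeed standard (EGA II, 7.1.7, or Stacks project Tag 00PH: any noetherian local domain of dimension $\geq 1$ is dominated by a DVR with the same fraction field) — and then complete and take the completed algebraic closure. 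Your route has two small advantages: it applies verbatim to any noetherian local domain, so the normality (rather than regularity) hypothesis on $S$ causes no friction, and it avoids the induction entirely; the price is that the residue field of your $\OO_K$ may be a large algebraically closed extension $k_K$ of $k$, which you correctly compensate for with the geometric invariance of purity (Remark \ref{remark:geometricity_groupless}), while the paper's reduction keeps the residue field equal to $k$. Two trivial loose ends you may as well mention: the case $\dim\OO_{S,s}=0$ (where $S=\Spec k$ and the statement is vacuous, but no DVR exists), and the fact that your intermediate descent to a finite extension $L/K(S)$ is not really needed, since $K$ already contains a copy of $\overline{K(S)}$ and the rational curve can be base-changed directly.
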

\begin{proof}
We can first localize at the point $s$ and assume that $S$ is local. By standard cutting arguments (see the proof of Corollary \ref{cor:purity_gens}) we can replace $S$ with the spectrum of a valuation ring  having residue field equal to $k$ and a complete, algebraically closed field of fraction $K$. We need to prove that if $X_k$ is pure, then $X_K$ is also pure. Since $k$ is of characteristic zero, the latter statement follows then from Proposition  \ref{prop:pur_gen_baby}.
\end{proof}

\begin{theorem}[Grouplessness generizes]\label{thm:gr_gen}    Let $k$ be an algebraically closed field and let $S$ be an integral noetherian normal scheme of characteristic zero.
	Let $X\to S$ be a proper morphism of  schemes. If there is a point $s$ in $S(k)$ such that $X_s$ is groupless over $k$, then $X_{K(S)}$ is    groupless over $K(S)$.  
\end{theorem}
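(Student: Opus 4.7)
The plan is to reduce this algebraic statement to the non-archimedean setting of Theorem \ref{thm:main_result}, following the template used in the proof of Theorem \ref{thm:pur_gen} but with grouplessness in place of purity. The key conceptual point is that Theorem \ref{thm:main_result} outputs the stronger conclusion of $K$-analytic Brody hyperbolicity, which trivially implies grouplessness: for any abelian variety $B$ over $K$, an algebraic morphism $B \to X_K$ analytifies to $B^{\an} \to X_K^{\an}$, and by Definition \ref{def} this morphism from a connected finite type $K$-group scheme must be constant.

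Concretely, I would first localize at $s$ and assume $S = \Spec A$ with $A$ a noetherian normal local domain whose residue field is $k$. Using the standard cutting arguments of Corollary \ref{cor:purity_gens} (as invoked also in the proof of Theorem \ref{thm:pur_gen}), I would then replace $S$ by $\Spec \OO$, where $\OO$ is a rank-one valuation ring with complete algebraically closed fraction field $K$ and algebraically closed residue field $k' \supseteq k$, together with a dominant morphism $\Spec \OO \to S$ sending the closed point to $s$; by enlarging $K$ if necessary I may also arrange $K \supseteq K(S)^a$. Under this reduction, the special fiber of $X_\OO \to \Spec \OO$ is $(X_s)_{k'}$, which is groupless over $k'$ by the geometricity of grouplessness (Remark \ref{remark:geometricity_groupless}) applied to the hypothesis that $X_s$ is groupless over $k$. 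Theorem \ref{thm:main_result} then yields that $X_K$ is $K$-analytically Brody hyperbolic, and hence (by the observation above) groupless over $K$. Since $K \supseteq K(S)^a$, grouplessness descends: a non-constant morphism $A \to X_{K(S)^a}$ from an abelian variety over $K(S)^a$ would base change to a non-constant morphism $A_K \to X_K$, contradicting the grouplessness of $X_K$. We conclude that $X_{K(S)^a}$, and hence $X_{K(S)}$, is groupless.

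The main obstacle lies in carrying out the reduction to the valuation-ring setting, because $S$ is only assumed normal rather than regular, so the regular-sequence induction in the proof of Corollary \ref{cor:purity_gens} does not apply verbatim. The way around this is to first dominate $A$ by a discrete valuation ring of $\Frac(A)$, for example by taking the local ring at a codimension-one point of a regular alteration of $\Spec A$ in the sense of de Jong (in the spirit of Lemma \ref{lem:descend}), and then to pass to an algebraic closure of the fraction field and complete to produce the desired $(\OO, K, k')$. The residue field may grow from $k$ to a larger algebraically closed field $k'$ in this process, but this is harmless by the geometricity of grouplessness.
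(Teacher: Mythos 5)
Your proposal follows essentially the same route as the paper: localize at $s$, reduce to the case where the base is $\Spec\OO$ for $\OO$ a rank-one valuation ring with complete algebraically closed fraction field $K$ and algebraically closed residue field, apply Theorem~\ref{thm:main_result} to conclude $K$-analytic Brody hyperbolicity of $X_K$, and then pass from that to grouplessness of $X_{K(S)}$. The paper's proof is exactly this, phrased tersely as ``by localizing at $s$ and by standard cutting arguments (see the proof of Corollary~\ref{cor:purity_gens}).''

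The one place you diverge is in scrutinizing that reduction step, and this is a legitimate observation: Corollary~\ref{cor:purity_gens} is stated and proved for an integral \emph{regular} noetherian local base, with the inductive step relying on a minimal generating set of $\mathfrak{m}$ being a regular sequence with regular quotient. Here $S$ is only assumed normal, so the regular-sequence induction does not transfer verbatim, and the paper glosses over this. Your fix via a regular alteration of $\Spec\OO_{S,s}$ in the sense of de~Jong is workable (in fact, in characteristic $0$ one can just use Hironaka resolution), but it is somewhat heavier than necessary. A cleaner repair is to note that one does not need the full descending chain of specializations used in Corollary~\ref{cor:purity_gens}: since the target of the generization is the generic point of $S$, it suffices to dominate the noetherian normal local domain $A = \OO_{S,s}$ by a single discrete valuation ring $V\subset\Frac(A)=K(S)$ with $\mathfrak{m}_V\cap A = \mathfrak{m}_A$ (a classical existence statement for noetherian local domains). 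One then takes the completion of an algebraic closure of $K(S)$ relative to the valuation induced by $V$ and applies Theorem~\ref{thm:main_result} to the resulting $\Spec\OO_K$, exactly as you do. You are also right, and it is a genuine subtlety, that the residue field will generally grow from $k$ to a larger algebraically closed field $k'$ in the process, so Remark~\ref{remark:geometricity_groupless} is needed both to propagate grouplessness of $X_s$ from $k$ to $k'$ and, at the end, to descend grouplessness from $K$ down to $K(S)$. Your invocation of Lemma~\ref{lem:descend} as a template is a slight mismatch — that lemma descends data \emph{from} a valuation ring \emph{to} a finitely presented $\ZZ$-scheme, which is the opposite direction of what you need here — but the underlying idea of using alterations to create regularity is sound.
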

\begin{proof} As in the proof of Theorem \ref{thm:pur_gen}, by localizing at the point $s$ and by standard cutting arguments (see the proof of Corollary \ref{cor:purity_gens}) we can replace $S$ with the spectrum of a valuation ring   having residue field equal to $k$ and a complete, algebraically closed field of fraction $K$.  

	By assumption, the special fiber $X_k$ of $X\to S$ is groupless. Therefore, by Theorem \ref{thm:main_result}, the proper scheme $X_K$ is $K$-analytically Brody hyperbolic. It follows readily that $X_K$ is groupless over $K$.  
\end{proof}
	 
	 \begin{proof}[Proof of Theorem \ref{thm:grouplessness_generizes}]
	 This  follows from (the more general) Theorem \ref{thm:gr_gen}. 
	 \end{proof}

 \subsection{Purity and grouplessness specialize}
 
In contrast to our proofs for the fact that purity and grouplessness ``generize'', we will  use  only algebraic  techniques to verify that purity and grouplessness   ``specialize''.

 \begin{theorem}[Purity specializes]\label{thm:purity_spec} Let $k$ be an uncountable algebraically closed field. Let $S$ be an integral variety over $k$. Let $X\to S$ be a projective morphism. Suppose that the   generic fiber of $X\to S$ is   pure. Then there is an $s$ in  $S(k)$ such that   $X_s$ is pure over $k$. 
 \end{theorem}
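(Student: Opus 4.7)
The plan is to reduce purity of a proper $k$-variety to the non-existence of rational curves, and then run a standard countability argument on a relative Hom scheme, using the fact that an irreducible variety over an uncountable algebraically closed field is not a countable union of proper closed subvarieties.

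First I would reduce the question to a statement about rational curves. By Remark \ref{remark:gr_and_pure}, a proper scheme $Y$ over the algebraically closed characteristic-zero field $k$ is pure if and only if every morphism $\mathbb{P}^1_k \to Y$ is constant. Thus it suffices to find $s \in S(k)$ such that the (proper) fiber $X_s$ admits no non-constant morphism from $\mathbb{P}^1_k$.

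Next, I would study the relative Hom scheme. Since $X \to S$ is projective, Grothendieck's theory yields that $\mathrm{Hom}_S(\mathbb{P}^1_S, X)$ is represented by a scheme locally of finite type over $S$, which decomposes as a countable disjoint union $\bigsqcup_P \mathrm{Hom}^P_S(\mathbb{P}^1_S, X)$ of quasi-projective $S$-schemes indexed by the Hilbert polynomial (with respect to a fixed relatively ample line bundle on $X$). For each $P$, let $H_P \subseteq \mathrm{Hom}^P_S(\mathbb{P}^1_S, X)$ be the open subscheme parameterizing non-constant morphisms, and let $Z_P \subseteq S$ denote the (constructible) image of $H_P \to S$, with closure $\overline{Z_P}$.

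Now suppose, towards a contradiction, that for every $s \in S(k)$ the fiber $X_s$ is not pure; equivalently, $X_s$ admits a non-constant morphism from $\mathbb{P}^1_k$. Then $S(k) = \bigcup_P Z_P(k) \subseteq \bigcup_P \overline{Z_P}(k)$, a \emph{countable} union (only countably many Hilbert polynomials occur). Because $k$ is uncountable and $S$ is an integral $k$-variety, a standard application of the Baire-category-style fact (an irreducible variety of finite type over an uncountable algebraically closed field is not the union of countably many proper closed subvarieties) forces $\overline{Z_P} = S$ for some $P$. Hence the morphism $H_P \to S$ is dominant, so its fiber over the generic point $\eta$ of $S$ is non-empty. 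Base changing to an algebraic closure of $K(S)$, we obtain a non-constant morphism $\mathbb{P}^1 \to X_{\overline{K(S)}}$. This contradicts purity of $X_{K(S)}$ (purity is a geometric notion, so it is preserved under passage to $\overline{K(S)}$ by Remark \ref{remark:geometricity_groupless}).

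The only non-routine ingredient is the uncountability lemma used to extract a dominant component $H_P$, and this is a well-known fact proved by induction on dimension (a curve over an uncountable algebraically closed field has uncountably many $k$-points, while each proper closed subvariety has only finitely many; the general case follows by slicing). Everything else assembles formally from the relative Hom scheme and the characterization of purity in Remark \ref{remark:gr_and_pure}.
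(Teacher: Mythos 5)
Your proposal is correct and follows essentially the same route as the paper's own proof: reduce purity of the proper fibers to the absence of rational curves, decompose the relative scheme of non-constant morphisms $\mathbb{P}^1_S\to X$ into a countable union of finite type pieces, use uncountability of $k$ to find a piece dominating $S$, and deduce a non-constant map $\mathbb{P}^1_{\overline{K(S)}}\to X_{\overline{K(S)}}$ contradicting (geometric) purity of the generic fiber. The only cosmetic difference is that you index the Hom scheme by Hilbert polynomials while the paper indexes by degree, which changes nothing.
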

 \begin{proof}   Suppose that, for any $s$ in $S(k)$, the projective variety $X_s$ is not pure over $k$. To prove the theorem, it suffices to show that the geometric generic fiber of $X\to S$ is not pure.
 
 To do so,	consider the scheme $H:=\underline{\Hom}^{nc}_S(\mathbb{P}^1_S, X)$ parametrizing non-constant morphisms from $\mathbb{P}^1_S$ to $X$; see \cite[Section 4.c, pp. 221-19 -- 221-20]{Groth-FGA}.  Note that  $H$ is a countable union $H= \sqcup_{d\in \ZZ} H^d$ of finitely presented schemes $H^d$ over $S$.   Let $S_d$ be the image of $H^d(k)$ in $S(k)\subset S$. 
 	  Now, as $X_s$ is not pure over $k$ for any $s$ in $S(k)$, we have that $S(k) = \cup_{d\in \ZZ} S_d$.  The latter implies that  $$S = \overline{S(k)} =  \bigcup_{d\in \ZZ} \overline{S_d}.$$ 
 	   Since $k$ is uncountable and the right hand side is a countable union of closed subsets of $S$, 
 	  there is an integer $e$ such that $S =  \overline{S_e}$.  
 	  Thus, $S_e$ is dense in $S$ so that the morphism $H^e\to S$ is dominant. Therefore, the generic fiber of $H^e\to S$ is non-empty. This implies that the generic fiber of $\underline{\Hom}^{nc}_S(\mathbb{P}^1_S,X)\to S$ is non-empty, i.e., there is a non-constant morphism $\mathbb{P}^1_{{K(S)^a}}\to X_{{K(S)^a}}$ where $K(S)^a$ is an algebraic closure of $K(S)$.  
 	 Thus, the geometric generic fiber of $X\to S$ is not   pure. This concludes the proof.
 \end{proof}
 
 \begin{corollary}\label{cor:spec_pur}
 	Let $k$ be an uncountable algebraically closed field. Let $S$ be an integral variety over $k$. Let $X\to S$ be a projective morphism. Suppose that the generic fiber of $X\to S$ is   pure. Then the set of $s$ in $S(k)$ with $X_s$   pure is dense in $S$.  
 \end{corollary}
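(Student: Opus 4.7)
The plan is to upgrade Theorem \ref{thm:purity_spec} from existence of one pure fiber to density by applying the theorem repeatedly to open subschemes of $S$. Concretely, I would show that, for any non-empty open subscheme $U \subset S$, there is a point $s \in U(k)$ with $X_s$ pure over $k$; this is precisely the statement that the pure-fiber locus meets every non-empty open of $S$, i.e.\ is dense.

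To carry this out, fix an arbitrary non-empty open $U \subset S$. Since $S$ is an integral variety over the uncountable algebraically closed field $k$, so is $U$ (openness preserves integrality and finite-typeness). The base change $X_U := X \times_S U \to U$ is projective because projectivity is stable under base change. Moreover, since $U$ is a non-empty open of the integral scheme $S$, the generic point of $U$ coincides with the generic point of $S$, so the generic fiber of $X_U \to U$ equals the generic fiber of $X \to S$, which is pure by hypothesis. All hypotheses of Theorem \ref{thm:purity_spec} are therefore satisfied by the family $X_U \to U$, and that theorem produces a point $s \in U(k)$ with $X_s$ pure over $k$.

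As $U$ was arbitrary, the set $\{\, s \in S(k) : X_s \text{ is pure over } k\,\}$ intersects every non-empty open of $S$, hence is Zariski dense in $S$, concluding the proof.

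There is essentially no obstacle: the argument is a standard bootstrap from ``non-empty'' to ``dense'' that works whenever the condition in question is an open-type condition stable under passing to open subschemes of the base, and all of the relevant stabilities (integrality of opens, projectivity under base change, equality of generic fibers over an open) are immediate. The only point worth double-checking is that purity of the generic fiber of $X\to S$ really transfers to the generic fiber of $X_U \to U$, which holds simply because the two generic fibers are literally isomorphic.
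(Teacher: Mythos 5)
Your proof is correct and is essentially the same argument the paper gives: both reduce density to applying Theorem \ref{thm:purity_spec} over a non-empty open subscheme of $S$, using that such an open is still an integral variety, that projectivity base-changes, and that the generic fiber is unchanged. The only (immaterial) difference is presentational: the paper argues by contradiction, applying the theorem to the specific open $S^\circ = S \setminus \overline{S'}$ (where $S'$ is the set of pure fibers) to obtain a contradiction, whereas you apply it directly to an arbitrary non-empty open $U$ and conclude density from the definition.
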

 \begin{proof}
 	Let $S'$ be the set of $s$ in $S(k)$ such that $X_s$ is pure over $k$. Suppose that $S'$ is not dense. Let $S^\circ$ be the complement of the closure of $S'$ in $S$. Then $S^\circ$ is a dense open of $S$. Let $X^\circ\to S^\circ$ be the restriction of $X\to S$ to $S^\circ$. Then the generic fiber of $X^\circ\to S^\circ$ is   pure (as it equals the generic fiber of $X\to S$). Thus, by Theorem \ref{thm:purity_spec}, there is an $s$ in $S^\circ(k)$ such that $X_s$ is pure over $k$. But then $s$ is an element of $S'$ leading to a contradiction.
 \end{proof}

 We are now ready to prove that grouplessness also specializes in families.  That is, let $k$ be an uncountable algebraically closed field and  let $S$ be an integral variety over $k$ with function field $K = K(S)$.  
 	Let $X\to S$ be a  projective morphism of schemes. We now show that, if $X_{K}$ is   groupless over $K$, then there is an $s$ in $S(k)$ such that $X_s$ is groupless.
 
 \begin{proof}[Proof of Theorem \ref{thm:groupless_spec}]   Let $A_g$ be the stack of principally polarized $g$-dimensional abelian varieties over $\ZZ$, and let $U_g\to A_g$ be the universal family. Note that the Hom-stack $\underline{\Hom}_{S\times A_g}(U_g, X\times A_g )\to S\times A_g$ is a countable disjoint union of finitely presented algebraic stacks. More precisely,  for every polynomial $P$ in $\QQ[t]$,
 	let $H_{g,P} =\underline{\Hom}^{P}_{S\times A_g}(U_g, X\times A_g)$ be the substack whose objects are  morphisms from $U_g\times S$ to $X\times A_g$ over $S\times A_g$ with Hilbert polynomial $P$. Note that $H_{g,P}\to A_g\times S$ is a finitely presented morphism of stacks \cite{OlssonHom}.  Let $S_{g,P}$ be the image of $H_{g,P}(k)$ in $S(k)$ via $H_{g,P}\to A_g\times S\to S$. (This map   associates to a $k$-point $s$ in $S(k)$, a principally polarized abelian variety $A$ over $k$ and a non-constant morphism $A\to X_s$ the point $s$ in $S(k)$.) Suppose that, for all $s$ in $S(k)$, the variety $X_s$ is not groupless. Then, by Lemma \ref{lem:zarhin}, for all $s$ in $S(k)$, there is a principally polarizable abelian variety $A$ over $k$ and a non-constant morphisms $A\to X_s$. Therefore, we have that
 	$$ S(k) = \bigcup_{(g,P)\in \ZZ_{\geq 0} \times \QQ[t]} S_{g,P}. $$ In particular, 
 	$$S = \overline{S(k)} =  \overline{\cup_{(g,P)\in \ZZ_{\geq 0} \times \QQ[t]}  S_{g,d}} = \bigcup_{(g,P)\in \ZZ_{\geq 0} \times \QQ[t]} \overline{S_{g,P}}.  $$
 	Since $k$ is uncountable and the right hand side is a countable union of closed subsets of $S$, there is an integer $g$ and a polynomial $P\in \QQ[t]$ such that   $\overline{S_{g,P}} = S$. This means that the morphism of stacks $H_{g,P}\to S$ is dominant. In other words, its generic fibre is non-empty. This means that $X_{{K(S)^a}}$ admits a non-constant morphism from some $g$-dimensional abelian variety, where  $K(S)^a$ is an algebraic closure of $K(S)$.  This proves the theorem.
 \end{proof}
 
 \begin{corollary} Let $k$ be an uncountable algebraically closed field. Let $S$ be an integral variety over $k$ with function field $K = K(S)$.
 	Let $X\to S$ be proper. If $X_{\overline{K}}$ is groupless, then the set of $s$ in $S(k)$ such that $X_s$ is groupless is dense in $S$. 
 \end{corollary}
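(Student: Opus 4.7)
The plan is to mimic the argument of Corollary \ref{cor:spec_pur} verbatim, replacing ``pure'' with ``groupless'' throughout, and feeding in Theorem \ref{thm:groupless_spec} in place of Theorem \ref{thm:purity_spec}. That is, I would argue by contradiction, producing a dense open subscheme of $S$ over which the hypothesis of Theorem \ref{thm:groupless_spec} is still satisfied, and then apply that theorem to this open to obtain a $k$-point at which the fiber is groupless, contradicting the choice of open.

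More precisely, let $S'\subseteq S(k)$ be the set of $s$ such that $X_s$ is groupless over $k$. Assume for contradiction that $S'$ is not dense in $S$, and set $S^\circ := S\setminus \overline{S'}$. Then $S^\circ$ is a non-empty open of $S$; since $S$ is integral, $S^\circ$ is also integral and shares its function field with $S$, namely $K = K(S)$. Let $X^\circ\to S^\circ$ denote the pullback of $X\to S$. Its geometric generic fiber coincides with $X_{\overline{K}}$, which is groupless over $\overline{K}$ by hypothesis.

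Now I would apply Theorem \ref{thm:groupless_spec} to the proper morphism $X^\circ\to S^\circ$ over the integral variety $S^\circ$ over the uncountable algebraically closed field $k$. That theorem produces a point $s\in S^\circ(k)$ with $X_s$ groupless over $k$, so $s\in S'$. This contradicts $s\in S^\circ = S\setminus \overline{S'}$, and therefore $S'$ must have been dense in $S$ to begin with.

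There is no real obstacle: the only thing worth double-checking is that the hypothesis of Theorem \ref{thm:groupless_spec} is preserved under passing from $S$ to the open $S^\circ$ (which is immediate because the function fields agree, so the geometric generic fiber is unchanged), and that $S^\circ$ qualifies as an ``integral variety over $k$'' in the sense of Theorem \ref{thm:groupless_spec} (which is clear since $S$ is, and integrality and being a variety are inherited by non-empty opens).
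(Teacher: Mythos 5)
Your argument is exactly the one the paper intends: the paper's proof of this corollary is the one-line remark ``This follows from the previous theorem (cf.\ the proof of Corollary \ref{cor:spec_pur}),'' and you have simply written out that contradiction argument verbatim, correctly checking that the function field (and hence the geometric generic fiber) is unchanged when passing to the dense open $S^\circ$. Correct, and same approach as the paper.
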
 
 \begin{proof}
 	This follows from the previous theorem (cf. the proof of Corollary \ref{cor:spec_pur}).
 \end{proof}
  
   We note that the results of this section imply that, for $S$ an integral noetherian scheme over $\mathbb{Q}$  and $X\to S$ a projective morphism,  the set of $s$  in $S$ such that $X_s$ is Zariski-countable open, as defined in \cite{vBJK}.

 \section{A Theorem of the Fixed Part in mixed characteristic}
 In this section we let $K$ be a complete algebraically closed non-archimedean valued field of characteristic zero whose residue field is of characteristic $p>0$.
 
Recall that any complex analytic variety which is uniformized by some  bounded  domain satisfies (a consequence of) the Theorem of the Fixed Part; see Theorem \ref{thm:fixed_part_intro} in the introduction.  In this section we prove a non-archimedean analogue of this statement (Theorem \ref{thm:moduli_of_abelian_varieties_1}).  
 To do so, we start with a brief discussion of inverse limits and fundamental groups in the category of adic spaces. 
 
   The category of adic spaces does not have arbitrary inverse limits (cf. \cite[Definition~2.4.1]{ScholzeWeinstein}). An adequate replacement of this notion in certain cases (without appealing to diamonds \cite{ScholzeDiamonds}) is introduced in \cite[Definition 2.4.2]{huber}, and we recall it here briefly.
   \begin{definition}
let $\{X_m\}_{m\in I}$ be a cofiltered inverse
system of adic spaces. We say that an adic space $X$ endowed with compatible maps $X\ra X_m$ is \emph{similar to the projective limit }and we write $X\sim \varprojlim_XX_m$ if on the underlying topological spaces we have  $|X|\cong\varprojlim|X_m|$  and if there is an open cover of $X$ by affinoid subsets $\Spa(A, A^+)$ such that the map of rings $\varinjlim A_i\ra A$ has a dense image, where the limit runs over the affinoid open subsets $\Spa(A_i,A_i^+)$ of some $X_m$ over which the map $\Spa(A,A^+)\ra X_m$ factors.
   \end{definition}

     We will state our theorem using \'etale fundamental groups of analytic varieties. We recall that
   if $X$ is a  connected noetherian scheme, then $\pi_1^{\mathrm{et}}(X)$ denotes the \'etale fundamental group of $X$ (with respect to the choice of some geometric base point of $X$); see \cite{SGA1}. 
Analogously,   
  if $X$ is a connected rigid analytic variety over $K$,  we denote by $\pi^{\alg}_1(X,x)$  the \emph{algebraic \'etale fundamental group} of $X$, i.e., the pro-finite group attached to the category of finite \'etale covers of $X$ with respect to some chosen geometric point $x$ (see \cite[Theorem~2.9]{deJo-fg} and \cite[Section III.1.4.1]{andre-per}). Note that, by   \cite[Proposition III.1.4.4]{andre-per}, the isomorphism class of the algebraic \'etale fundamental group of $X$ is independent of the choice of base-point. We will therefore omit the base-point from our notation. 
     If $X$ is a connected finite type scheme over  $K$, then the analytification functor induces an isomorphism between $\pi^{\alg}_1(X^{\ad})$  and the \'etale fundamental group $\pi_1^{\mathrm{et}}(X)$  of $X$ (see \cite[Theorem 3.1]{luk}). 
  
  \begin{example}   
If $K$ is a non-archimedean complete algebraically closed field  of characteristic zero, then $$\pi_1^{\mathrm{et}}(\mathbb{G}_{m,K}) = \pi_1^{\alg}(\mathbb{G}_{m,K}) = \widehat{\mathbb{Z}}.$$
 \end{example}

   Using the strategy sketched in the introduction, inspired by the complex case, we now prove  the following result.

   \begin{proposition}\label{prop:thm_of_fixed_part} Let $S\to X_0$ be a morphism of connected reduced rigid analytic varieties over $K$.
Let $\{X_m\}_{m = 0}^\infty$ be a cofiltered inverse
system of connected rigid analytic varieties   over $K$ with finite \'etale transition maps.   Assume that there is a perfectoid 
space $P$ over $K$ such that $P\sim \varprojlim_m X_m$.  
Let $\Gamma_m \subset \pi_1^{\alg}(X_0)$ be the subgroup associated to $X_m\to X_0$, and suppose that  the  image of the induced morphism
on (algebraic) \'etale fundamental groups 
\[
\pi_1^{\alg}(S) \to \pi_1^{\alg}(X_0)
\]
lies in $\bigcap_{m=0}^\infty \Gamma_m$. Then $S\to X_0$ is constant.
   \end{proposition}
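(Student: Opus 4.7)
The plan is to mirror, in the non-archimedean setting, the complex-analytic strategy sketched in the introduction: lift $S \to X_0$ through the tower $\{X_m\}$ to the perfectoid covering $P$, and then apply the hyperbolicity of perfectoid spaces (Proposition \ref{prop:rigid_to_perf}) to force the lift to be constant. Since the projection $P \to X_0$ factors through this lift, the original morphism $S \to X_0$ is constant as well.

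The first step is to produce lifts $\tilde f_m : S \to X_m$ of $S \to X_0$ for every $m$. Fix a geometric base point $\bar s$ of $S$, let $\bar x_0$ be its image in $X_0$, and pick a compatible system of geometric base points $\bar x_m \in X_m$ above $\bar x_0$ (possible since the transition maps are finite étale, hence surjective on geometric fibres). Under the standard Galois correspondence for the algebraic étale fundamental group of connected rigid analytic varieties, the open subgroup $\Gamma_m \subset \pi_1^{\alg}(X_0,\bar x_0)$ corresponds to the pointed connected finite étale cover $(X_m,\bar x_m) \to (X_0,\bar x_0)$. The hypothesis that the image of $\pi_1^{\alg}(S,\bar s) \to \pi_1^{\alg}(X_0,\bar x_0)$ is contained in $\Gamma_m$ then yields a unique pointed lift $\tilde f_m : (S,\bar s) \to (X_m,\bar x_m)$ of $(S,\bar s) \to (X_0,\bar x_0)$. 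Uniqueness of pointed lifts, together with the compatibility of the chosen base points $\bar x_m$, guarantees that these lifts are compatible with the transition maps, so that $(\tilde f_m)_m$ forms a compatible system.

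The second step is to assemble this compatible system into an honest morphism of adic spaces $\tilde f : S \to P$. On the underlying topological spaces, the compatible family $(|\tilde f_m|)_m$ induces a continuous map $|S| \to \varprojlim_m |X_m| = |P|$. To upgrade this to a morphism of adic spaces, we use that $P \sim \varprojlim_m X_m$: any affinoid open $\Spa(A,A^+) \subset P$ is obtained, up to completion, as the direct limit of affinoids $\Spa(A_i,A_i^+)$ in the $X_m$, with $\varinjlim A_i \to A$ having dense image. Pulling back along the $\tilde f_m$ gives compatible ring maps $A_i \to \Gamma(\tilde f^{-1}(\Spa(A,A^+)),\mathcal O)$; these extend by continuity (using that the resulting functions are bounded, as $\tilde f$ lands in an affinoid) to a map $A \to \Gamma(\tilde f^{-1}(\Spa(A,A^+)),\mathcal O)$. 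These glue to a morphism of adic spaces $\tilde f : S \to P$ whose composition with $P \to X_0$ recovers $S \to X_0$.

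The third and final step is to invoke Proposition \ref{prop:rigid_to_perf}: since $S$ is a connected reduced rigid analytic variety over $K$ (with $K$ of residue characteristic $p>0$) and $P$ is a perfectoid space, the morphism $\tilde f : S \to P$ is constant. Composing with $P \to X_0$ shows that $S \to X_0$ is constant, completing the proof. The main technical obstacle is the second step: the relation $P \sim \varprojlim_m X_m$ is weaker than the universal property of an inverse limit in the category of adic spaces, so one has to argue carefully using the density condition on affinoid covers and the boundedness of the resulting functions in order to produce the morphism $\tilde f$ rather than merely a continuous map on topological spaces.
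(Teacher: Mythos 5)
Your architecture is the same as the paper's: use the hypothesis on $\pi_1^{\alg}$ to lift $S\to X_0$ compatibly through the tower, assemble the lifts into a morphism $S\to P$, and conclude with Proposition \ref{prop:rigid_to_perf}. Steps 1 and 3 are fine. The gap is in Step 2, exactly at the point you flag as the technical obstacle. To extend the compatible homomorphisms $A_i\to \mathcal{O}_S(V)$, $V=\tilde f^{-1}(\Spa(A,A^+))$, along the dense image of $\varinjlim A_i\to A$, it is not enough that each pulled-back function is bounded; you need the homomorphism $\varinjlim A_i\to \mathcal{O}_S(V)$ to be \emph{uniformly continuous} for the topology induced from $A$. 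The pointwise bounds give you that an element of $\varinjlim A_i$ lying in $\varpi^n A^+$ is sent into $\varpi^n\mathcal{O}^+_S(V)$; to conclude that such images tend to $0$ (so Cauchy nets map to Cauchy nets and the map extends to the completion), you need $\mathcal{O}^+_S(V)$, equivalently the ring of power-bounded functions, to be a \emph{bounded} subring of $\mathcal{O}_S(V)$ — i.e. the uniformity of the affinoids of $S$. This is precisely where the reducedness of $S$ must enter: reduced affinoids are uniform (the supremum seminorm is a norm equivalent to the given one), whereas for non-reduced $S$ the step genuinely breaks down (in $K\langle T\rangle/T^2$ the power-bounded elements $p^{-n}T$ form an unbounded family, and compatible systems of lifts through a $p$-power tower need not converge to a morphism into the perfectoid limit). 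Your justification ``the resulting functions are bounded, as $\tilde f$ lands in an affinoid'' does not supply this, and indeed your Step 2 never uses that $S$ is reduced.

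For comparison, the paper does not carry out this construction by hand: after producing the lifts to each $X_m$, it notes that $S$, being reduced, is stably uniform (Buzzard--Verberkmoes) and then invokes Ludwig's Proposition 2.2, which is exactly the statement that for a stably uniform adic space a compatible system of maps to the $X_m$ lifts to a morphism $S\to P$ when $P\sim\varprojlim_m X_m$; the rest of the argument is as in your Step 3. So your plan is the right one, but Step 2 needs either the uniformity argument spelled out (with reducedness of $S$ doing real work) or a citation to such a lifting result.
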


   \begin{proof}
By our assumption on the image of $\pi_1^{\alg}(S)\to \pi_1^{\alg}(X_0)$, the morphism lifts to every  finite \'etale cover $X_m$ of $X_0$. Since $S$ is reduced, hence stably uniform \cite[Section 3]{buzz-ver}, we   conclude by \cite[Proposition 2.2]{ludwig} that the  morphism lifts to a map $S\ra P$. The latter is  constant by Proposition \ref{prop:rigid_to_perf}, so that $S\to X_0$ is constant, as required.
   \end{proof}
   
  We follow the notation of Section \ref{section:app_to_mod}, and let $\mathcal{A}_g$ be the stack of $g$-dimensional principally polarized abelian schemes over $\ZZ$. Let $N>3$ be coprime to the residue characteristic of $K$.
  Let $X:=\mathcal{A}_{g,K}^{[N]}$, and note that $X$ is a smooth quasi-projective scheme over $K$.

Consider the principal congruence subgroups
\begin{eqnarray*} 
	\Gamma(p^n) &=& \left\{ \gamma \in \mathrm{GSp}_{2g}(\mathbb Z_p) \ | \ \gamma \equiv \left( \begin{array}{cc} 1 &0 \\ 0 & 1 \end{array} \right) \mod p^n  \right\}  
\end{eqnarray*} If $n\geq 1$ is an integer,    we let $X_{\Gamma(p^n)}$ be the corresponding moduli space of principally polarized $g$-dimensional abelian varieties with full level $N$-structure and with level $\Gamma(p^n)$-structure over $K$.

\begin{theorem}[Scholze]\label{thm:scholze}
There is a unique perfectoid space $X_{\Gamma(p^\infty)}$ such that   
	\[X_{\Gamma(p^\infty)} \sim \varprojlim_n X^{\ad}_{\Gamma(p^n)}.\] 
\end{theorem}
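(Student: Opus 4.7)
The plan is to follow Scholze's strategy for constructing perfectoid Shimura varieties at infinite level. The underlying intuition is that deepening the $\Gamma(p^n)$-level structure corresponds, via the moduli interpretation, to fixing finer and finer trivializations of the $p$-power torsion of the universal abelian scheme, and at infinite level amounts to trivializing the entire $p$-adic Tate module --- precisely the ``$p$-power root extraction'' operation that produces perfectoid objects. The uniqueness assertion will in the end be a formal consequence of the general theory of perfectoid spaces, so the work lies in constructing $X_{\Gamma(p^\infty)}$.

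First I would pass to a smooth toroidal compactification of each $X_{\Gamma(p^n)}$ and work affinoid-locally, using a formal model. After restricting to a sufficiently small open, one hopes to identify the transition rings in the tower $\{X_{\Gamma(p^n)}^{\an}\}$ as being obtained by adjoining $p$-th power roots of units coming from the Hodge bundle of the universal abelian scheme. The completed colimit of these algebras would then satisfy the Frobenius-surjectivity condition defining a perfectoid Tate algebra, and one can hope to glue the pieces into a candidate $X_{\Gamma(p^\infty)}$.

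The cleaner global approach is to build the Hodge--Tate period morphism $\pi_{HT}\colon X_{\Gamma(p^\infty)} \to \mathcal{F}\!\ell$ to the Lagrangian flag variety of $\mathrm{GSp}_{2g}$, and then verify the perfectoid property by pulling back a well-chosen affinoid cover of the target. Over the ordinary locus the perfectoid structure reduces, via the anticanonical tower, to the standard fact that $K\langle T_1^{1/p^\infty},\dots,T_n^{1/p^\infty}\rangle$ is perfectoid. The main obstacle, which is the technical core of Scholze's argument, is extending perfectoid control across the supersingular locus and the boundary of the compactification; this requires exploiting the $\mathrm{GSp}_{2g}(\mathbb{Z}_p)$-equivariance of the Hecke tower, together with a delicate analysis near the boundary of the toroidal compactification.

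Once these local perfectoid charts are constructed and glued, the similarity $X_{\Gamma(p^\infty)}\sim \varprojlim_n X_{\Gamma(p^n)}^{\an}$ reduces to checking the two defining properties: a homeomorphism on underlying spectral spaces, and density of the colimit of affinoid rings inside the limit's structure sheaf. Both should fall out of the explicit construction. Uniqueness of a perfectoid space with this property is then a formal categorical consequence of the behaviour of $\sim$ for perfectoid inverse systems.
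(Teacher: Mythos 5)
The paper's ``proof'' of this theorem is a one-line citation to \cite[Theorem~III.1.2]{ScholzeTorsion}: the statement is precisely Scholze's theorem (applied to the Siegel modular variety over $K$), which is why it is labelled ``(Scholze)''. It is a very deep result and not something the paper attempts to reprove. Your proposal, by contrast, attempts to outline Scholze's actual construction. As a description of Scholze's strategy it is roughly faithful --- the anticanonical tower over the ordinary locus, reduction to perfectoid-ness of $K\langle T_1^{1/p^\infty},\dots,T_n^{1/p^\infty}\rangle$ there, the Hodge--Tate period map to the flag variety, $\mathrm{GSp}_{2g}(\mathbb{Z}_p)$-equivariance, and the gluing across the supersingular locus and the toroidal boundary are indeed the main ingredients. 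But as a \emph{proof} it has a genuine gap that you acknowledge yourself: you write that extending perfectoid control past the ordinary locus and the boundary ``is the technical core of Scholze's argument'' and leave it unresolved. That step is not a routine verification; it is where the bulk of \cite{ScholzeTorsion} lives (the approximation lemma, the delicate control of the Hodge--Tate period map near the boundary, the descent to $X^{\ad}_{\Gamma_0(p^\infty)}$, etc.), and without it the construction does not go through. One smaller imprecision: the uniqueness is not a ``formal categorical consequence'' that you get for free --- it comes from a specific result (essentially \cite[Proposition 2.4.5]{ScholzeWeinstein}, or Huber's theory) saying that a perfectoid space satisfying $\sim \varprojlim X_m$ is determined up to unique isomorphism, and that result itself needs the perfectoid hypothesis on $X_{\Gamma(p^\infty)}$. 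In short: your outline is a fair summary of where Scholze's theorem comes from, but it is not a substitute for the citation, which is what the paper (correctly) uses.
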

\begin{proof}
This   follows     from \cite[Theorem~III.1.2]{ScholzeTorsion}. 
 \end{proof}
 
 As we show now,   the Theorem of the Fixed Part for the moduli space of principally polarized abelian varieties (Theorem \ref{thm:moduli_of_abelian_varieties_1}) is   a consequence of Scholze's theorem 
  and Proposition \ref{prop:thm_of_fixed_part}.
  
  \begin{proof}[Proof of   Theorem \ref{thm:moduli_of_abelian_varieties_1}]
  Combine Proposition \ref{prop:thm_of_fixed_part} and  Theorem \ref{thm:scholze}.
  \end{proof}
   
   We conclude this section with other applications of Proposition \ref{prop:thm_of_fixed_part}.
  
  \begin{example} Let $\mathbb{T}^n$ be $\Spa K\langle T_1^{\pm 1},\ldots, T_n^{\pm 1}\rangle$, and let $X$ be \'etale over $\mathbb{T}^n$. Define $X_0 := X$. For $m>0$, let $f_m:\mathbb{T}^n\to \mathbb{T}^n$ be the $m$th power map, and define $X_m:= X\times_{\mathbb{T}^n, f_m} \mathbb{T}^n$. Then the inverse limit of the cofiltered inverse system $\{X_m\}_{m=0}^\infty$  is a perfectoid space \cite[Lemma 4.5]{scholze-ph}. We conclude that $X$ is hyperbolic (Proposition \ref{prop:aff_are_brody}), and satisfies the Theorem of the Fixed Part (Proposition \ref{prop:thm_of_fixed_part}).
  \end{example}
  
 \begin{example} \label{ex:gm}
 Consider the inverse system of rigid analytic varieties $\{X_m\}_{m=0}^\infty$, where $X_m:=\mathbb{G}_{m,K}^{\an}$ and the (finite \'etale) morphism $X_m\to X_{m-1}$ is given by $z\mapsto z^p$.
 There is a perfectoid space $\mathbb{G}_{m}^{\mathrm{perf}}$   such that $\mathbb{G}_m^{\mathrm{perf}}\sim  \varprojlim_m X_m$ (see \cite[Claim~1.4]{ScholzePerfectoid}); we refer to $\mathbb{G}_m^{\mathrm{perf}}$  as the perfection of $\mathbb{G}_{m,K}$.
   Since $\mathbb{G}_m^{\mathrm{perf}}$ is a perfectoid space, it is $K$-analytically Brody hyperbolic (Proposition \ref{prop:rigid_to_perf}). Moreover, by Proposition \ref{prop:thm_of_fixed_part}, the curve $\mathbb{G}_{m,K}$ satisfies the Theorem of the Fixed Part. 
   
   Now, the above shows that $\mathbb{G}_{m,K}^{\ad}$ is $K$-analytically Brody hyperbolic \textit{up to} a pro-finite \'etale cover. However, it is clear that $\mathbb{G}_{m,K}$ is not  $K$-analytically Brody hyperbolic. This shows that an adic space which is $K$-analytically Brody hyperbolic up to a pro-finite \'etale cover is not necessarily  $K$-analytically Brody hyperbolic. (Thus, the analogue of Proposition \ref{prop:chevalley_weil} for pro-finite \'etale morphisms fails.)
 \end{example}

 \begin{example}  We give  an example similar to Example \ref{ex:gm}.
Let $A$ be an abelian variety over $K$. Define the inverse system of rigid analytic varieties $\{X_m\}_{m=0}^\infty$ by $X_m := A^{\an}$ and the (finite \'etale) morphism $X_m\to X_{m-1}$ to be multiplication by $p$. Then, there is a perfectoid space $A^{\mathrm{perf}}$ such that $A^{\mathrm{perf}}\sim  \varprojlim_m X_m$; see  \cite[Theorem~1]{AbVarPerfectoid}. In particular, the abelian variety $A$ satisfies the Theorem of the Fixed Part (Proposition \ref{prop:thm_of_fixed_part}), and is \emph{up to} a pro-finite \'etale morphism a $K$-analytically Brody hyperbolic variety. However, $A$ is (clearly) not $K$-analytically Brody hyperbolic.
 \end{example}
 
A complex analytic space which has a topological covering by a Brody hyperbolic space is itself Brody hyperbolic. However,   the last two examples above show that an adic space over $K$ which has a pro-finite \'etale cover by a $K$-analytically Brody hyperbolic space (e.g., a perfectoid space)    is itself not necessarily  $K$-analytically Brody hyperbolic. Therefore, the last two examples above show that  ``descending'' the $K$-analytic Brody hyperbolicity of $X_{\Gamma(p^\infty)}$ to the moduli space $X$  is a non-trivial endeavour. More precisely, to show that $X$ is  $K$-analytically Brody hyperbolic, one has to   show that the ``monodromy'' of every morphism $\mathbb{G}_m^{\an}\to X^{\an}$ is trivial, and the latter does not follow from the existence of a pro-finite \'etale cover by a perfectoid space.

\bibliography{refs}{}
\bibliographystyle{plain}
\end{document}